\documentclass[reqno]{amsart}
\usepackage{amssymb,amsmath,amsthm}
\pagestyle{plain}
\oddsidemargin = 0.0cm \evensidemargin = 0.0cm \textwidth = 6.0in
\textheight =8.0in

\newtheorem{theorem}{Theorem}[section]
\newtheorem{lemma}[theorem]{Lemma}
\newtheorem{corollary}[theorem]{Corollary}
\newtheorem{proposition}[theorem]{Proposition}

\numberwithin{equation}{section}

\newcommand{\spt}[1]{\mbox{\normalfont spt}\Parans{#1}}
\newcommand{\sptBar}[2]{\overline{\mbox{\normalfont spt}}_{#1}\Parans{#2}}

\newcommand{\Parans}[1]{\left(#1\right)}



\newcommand{\aqprod}[3]{\Parans{#1;#2}_{#3}}
\newcommand{\jacprod}[2]{\left[#1;#2\right]_\infty}

\newcommand{\bin}[2]{
	\Parans{\substack{#1\\#2}}
}

\newcommand{\ST}{\widetilde{ \mbox{\normalfont PP}}}
\newcommand{\sT}{\widetilde{ \mbox{\normalfont pp}}}

\newcommand{\J}[2]{j\Parans{#1;#2}}
\newcommand{\SSeries}[3]{\Sigma\Parans{#1,#2,#3}}

\author{CHRIS JENNINGS-SHAFFER}
\address{Department of Mathematics, University of Florida\\
Gainesville, Florida 32611, USA
\endgraf cjenningsshaffer@ufl.edu}

\keywords{Andrews' spt-function; congruences; partitions; smallest parts function.}

\subjclass[2010]{Primary 11P83, 05A17, 11P82}

\title{Congruences for Partition Pairs with Conditions}

\begin{document}
\allowdisplaybreaks

\begin{abstract}
We prove congruences for the number of partition pairs
$(\pi_1,\pi_2)$ such that $\pi_1$ is non-empty,
$s(\pi_1)\le s(\pi_2)$, and $\ell(\pi_2)< 2s(\pi_1)$ where
$s(\pi)$ is the smallest part and $\ell(\pi)$ is the largest part of
a partition.
The proofs use Bailey's Lemma and a generalized Lambert series identity of Chan.
We also discuss how a partition pair crank gives combinatorial refinements
of these congruences.
\end{abstract}

\maketitle

\section{Introduction}

We recall that a partition of a non-negative integer $n$ is a non-decreasing 
sequence of positive integers that sum to $n$. For a partition $\pi$ we 
denote the sum of parts by $|\pi|$, the number of parts by $\#(\pi)$,
the smallest part by $s(\pi)$, and the largest part by $\ell(\pi)$. We denote
the empty partition by $\emptyset$ and use the convention that the empty
partition has smallest part $\infty$ and largest part 0.
A partition pair of $n$ is an ordered pair of partitions
$(\pi_1,\pi_2)$ such that $n=|\pi_1|+|\pi_2|$.

In this paper, we consider the set of
partition pairs
$(\pi_1,\pi_2)$ such that $\pi_1\not =\emptyset$, $s(\pi_1)\le s(\pi_2)$, and
$\ell(\pi_2)<2s(\pi_1)$. That is, $\pi_1$ is non-empty and each part of $\pi_2$
is in the interval $[s(\pi_1),2s(\pi_1))$. We denote the set of all such partition
pairs by $\ST$. As we will see from the generating
function, the number of such partition pairs of $n$ can
also be viewed as the number
of occurrences of the smallest parts in the partition pairs $(\pi_1,\pi_2)$ of 
$n$ such that $\pi_1\not =\emptyset$, $s(\pi_1)< s(\pi_2)$, and
$\ell(\pi_2)<2s(\pi_1)$.

We let $\sT(n)$ denote the number of partition pairs $(\pi_1,\pi_2)$
from $\ST$ 
of $n$. As an example,  $\sT(5) = 15$ 
since the partition pairs are:
$(5, 			\emptyset)$,
$(1+4, 		\emptyset)$,
$(2+3, 		\emptyset)$,
$(1+1+3, 		\emptyset)$,
$(1+2+2, 		\emptyset)$,
$(1+1+1+2, 	\emptyset)$,
$(1+1+1+1+1, \emptyset)$,
$(1+3, 		1)$,
$(1+1+2, 		1)$,
$(1+1+1+1, 	1$),
$(1+2, 		1+1)$,
$(1+1+1, 		1+1)$, 
$(2, 			3)$,
$(1+1, 		1+1+1)$, and
$(1, 		1+1+1+1)$.

These partition pairs actually came from considering smallest parts partition
functions.
We recall that $\spt{n}$ is the number of occurrences of the
smallest part in each partition of $n$. 
An overpartion of $n$ is a partition of $n$ in which the first 
occurrence of a part may be overlined; the number of smallest parts among the
overpartitions of $n$ where the smallest part is not overlined is
$\sptBar{}{n}$. We use the convention of not including overpartitions when the
smallest part is overlined, 
otherwise if we did include these overpartitions, then
we would exactly double $\sptBar{}{n}$.
Additionally we have $\sptBar{1}{n}$ and $\sptBar{2}{n}$
denoting the restriction of $\sptBar{}{n}$ to overpartitions where the smallest part 
is odd and even, respectively. 

In Section 3 of \cite{GarvanJennings}, Garvan and the author considered the partition pairs $(\pi_1,\pi_2)$
such that 
$\pi_1$ is non-empty, $s(\pi_1)\le s(\pi_2)$, and
if a part of $\pi_2$ is larger than $2s(\pi_1)$ then it must be odd.
It turns out the number of such partition pairs of $n$ is
$\sptBar{}{n}$. Additionally restricting $s(\pi_1)$ to an odd
or even number gives $\sptBar{1}{n}$ or $\sptBar{2}{n}$, respectively.
The partition pairs here are those were we additionally do not allow the parts 
of $\pi_2$ to be larger than $2s(\pi_1)-1$.

The smallest parts functions satisfy a wide array of congruences. For example, 
for $n\ge 0$ we have
\begin{align*}
	\spt{5n+4} &\equiv 0 \pmod{5},
	\\
	\spt{7n+5} &\equiv 0 \pmod{7},
	\\
	\spt{13n+6} &\equiv 0 \pmod{13},
	\\
	\sptBar{}{3n} & \equiv 0 \pmod{3},
	\\
	\sptBar{1}{3n} &\equiv 0 \pmod{3},
	\\
	\sptBar{1}{5n} &\equiv 0 \pmod{5},
	\\
	\sptBar{2}{3n} &\equiv 0 \pmod{3},
	\\
	\sptBar{2}{3n+1} &\equiv 0 \pmod{3},
	\\
	\sptBar{2}{5n+3} &\equiv 0 \pmod{5}
.
\end{align*}
The congruences for $\spt{n}$ were first proved by Andrews in \cite{Andrews}
and
the congruences for $\sptBar{}{n}$, $\sptBar{1}{n}$, and $\sptBar{2}{n}$ 
were
first proved by Bringmann, Lovejoy, and Osburn in \cite{BLO}.
The function $\sT(n)$ satisfies similar congruences.

\begin{theorem}\label{TheoremCongruences}
For $n\ge 0$
\begin{align*}
	\sT(3n+2) &\equiv 0 \pmod{3},\\ 	
	\sT(5n+3) &\equiv 0 \pmod{5},\\ 	
	\sT(5n+4) &\equiv 0 \pmod{5}. 	
\end{align*}
\end{theorem}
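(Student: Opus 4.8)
The plan is to derive an explicit generating-function identity for $\sT(n)$ and then to multisect it. Tracking $m=s(\pi_1)$, the partition $\pi_1$ is an arbitrary partition with smallest part exactly $m$ (contributing $q^m/(q^m;q)_\infty$) and $\pi_2$ is an arbitrary partition, possibly empty, with all parts in $\{m,m+1,\dots,2m-1\}$ (contributing $1/(q^m;q)_m$), so
\begin{align*}
\sum_{n\ge 0}\sT(n)\,q^n=\sum_{m\ge 1}\frac{q^m}{(q^m;q)_\infty\,(q^m;q)_m}=\frac{1}{(q;q)_\infty}\sum_{m\ge 1}\frac{q^m\,(q;q)_{m-1}^{2}}{(q;q)_{2m-1}},
\end{align*}
using $(q^m;q)_\infty=(q;q)_\infty/(q;q)_{m-1}$ and $(q^m;q)_m=(q;q)_{2m-1}/(q;q)_{m-1}$; as a check this reproduces $\sT(1)=1$, $\sT(2)=3$, $\sT(5)=15$. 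I would also carry along a crank refinement, a two-variable series $F(z,q)$ with $F(1,q)=\sum_{n\ge0}\sT(n)q^n$, obtained by inserting a parameter $z$ into the generating function; specializing $z$ to a root of unity later yields both the congruences and the combinatorial refinement advertised in the abstract.

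\textbf{Bailey's Lemma and Chan's identity.} After elementary rearrangement the inner summand equals $\frac{q^m(1+q^m)}{1-q^m}\binom{2m}{m}_q^{-1}$, precisely the type of term produced by iterating the limiting form of Bailey's Lemma on the explicit Bailey pair (relative to a suitable base) whose $\beta$-side is a multiple of $(q;q)_{2n}^{-1}$ and whose $\alpha$-side is a theta-like sequence. Since the summand carries $q^m$ rather than $q^{m^2}$, one keeps one Bailey parameter finite (equivalently, one sums against $1/(1-q^n)$), and this is exactly what manufactures the $(1-q^n)$ denominators of a Lambert series. Feeding the outcome into Chan's generalized Lambert series identity then evaluates the $q$-hypergeometric sum in closed form, producing an expression of the schematic shape
\begin{align*}
\sum_{n\ge 0}\sT(n)\,q^n=\frac{1}{(q;q)_\infty}\sum_{n\ge 1}(-1)^{n-1}q^{an^2+bn}\,\frac{1+q^{cn}}{1-q^{cn}},
\end{align*}
together with its two-variable analogue having kernel $\frac{(1-z)(1-z^{-1})}{(1-zq^{cn})(1-z^{-1}q^{cn})}$, which at $z=1$ returns the previous display.

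\textbf{Congruences by multisection.} With a genuine Lambert series over $(q;q)_\infty$ in hand, the three congruences become a dissection computation: substituting $n=3k+r$ sorts the exponents $an^2+bn$ by residue modulo $3$, the Lambert kernels dissect term by term, and $1/(q;q)_\infty$ has its classical $3$-dissection; assembling these and reducing modulo $3$ (using $(q;q)_\infty^3\equiv(q^3;q^3)_\infty$ and Euler's pentagonal number theorem to collapse the theta pieces) should leave the coefficient of the $3n+2$ progression divisible by $3$. The same scheme with the $5$-dissection gives $\sT(5n+3)\equiv\sT(5n+4)\equiv0\pmod 5$. Equivalently, one may specialize the crank parameter in $F(z,q)$ to a primitive cube (resp.\ fifth) root of unity and verify that the relevant progressions have vanishing coefficients, which simultaneously proves the congruences and exhibits the promised combinatorial refinement.

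\textbf{Main obstacle.} The crux is the middle step: pinning down the exact Bailey pair, applying Chan's identity in the correct specialization, and then carrying out the multisection of the resulting generalized Lambert series so that it matches the multisection of $1/(q;q)_\infty$ with all extraneous terms cancelling modulo $3$ and modulo $5$. Threading the crank parameter through Bailey's Lemma and Chan's identity so that the root-of-unity specialization is legitimate — no poles, and $q$-adic convergence retained — is the delicate bookkeeping; once the closed form is secured the arithmetic progressions drop out by routine dissection.
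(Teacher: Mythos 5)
Your outline reproduces the paper's strategy almost exactly---generating function by smallest part, a two-variable crank refinement $\ST(z,q)$, Bailey's Lemma applied to a Bailey pair with $\beta_n$ proportional to $1/(q;q)_{2n-1}$ to produce a generalized Lambert series over $(q;q)_\infty$, specialization of $z$ at $\zeta_3$ and $\zeta_5$, and a dissection carried out with Chan's identity---so the approach is the right one. But as a proof it has a genuine gap: the entire dissection step, which is the substance of the theorem, is asserted rather than performed. The paper's Sections 3 and 4 consist of a sequence of specific identities (Propositions 3.3--3.5 and 4.3--4.5) obtained from carefully chosen specializations of Chan's $s=4$, $s=6$, and $s=10$ cases, combined with the $3$- and $5$-dissections of $(q;q)_\infty$ and an identity of Jacobi, and it is only after all of these that the coefficients in the progressions $3n+2$, $5n+3$, $5n+4$ are seen to vanish. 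Saying the extraneous terms ``should cancel'' is precisely the claim to be proved; nothing in your argument forces it.

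Two further points. First, your stated two-variable kernel $\frac{(1-z)(1-z^{-1})}{(1-zq^{cn})(1-z^{-1}q^{cn})}$ vanishes identically at $z=1$, so it cannot ``return the previous display''; the kernel that actually comes out of the Bailey pair (relative to $(1,q)$, with $\alpha_{3n}=0$ and $\alpha_{3n\pm1}=q^{6n^2\pm n}-q^{6n^2\pm7n+2}$) is $q^{6n^2+4n+1}(1-q^{6n+2})/\bigl((1-zq^{3n+1})(1-z^{-1}q^{3n+1})\bigr)$. Second, your two ``equivalent'' routes are not equivalent: dissecting the $z=1$ Lambert series directly and reducing modulo $3$ via $(q;q)_\infty^3\equiv(q^3;q^3)_\infty$ is a different (and for series of this type typically unsuccessful---compare $\mathrm{spt}(5n+4)$, where naive dissection does not suffice) argument from the root-of-unity specialization, which requires showing the relevant coefficients of $\ST(\zeta_t,q)$ are exactly zero, not merely congruent to zero. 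Only the second route is carried out in the paper, and it is the one you would need to complete.
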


To start, by summing according to the smallest part of $\pi_1$ we see a generating function
for $\sT(n)$ is
\begin{align*}
	\ST(q) 
	&= 
	\sum_{n=0}^\infty \sT(n)q^n
	=
	\sum_{n=1}^\infty \frac{q^n}{\aqprod{q^{n}}{q}{\infty}
		\aqprod{q^{n}}{q}{n}}
	=
		\sum_{n=1}^\infty \frac{q^n\aqprod{q^{2n}}{q}{\infty}}
			{\aqprod{q^n,q^n}{q}{\infty}}
.
\end{align*}
By rewriting the summand as 
$\frac{q^n}{(1-q^n)^2\aqprod{q^{n+1}}{q}{\infty}\aqprod{q^{n+1}}{q}{n-1}}$,
we see that $\sT(m)$
is also the number of occurrences of the
smallest parts in the partition pairs $(\pi_1,\pi_2)$ of $m$ where each part of 
$\pi_2$ is strictly between $s(\pi_1)$ and $2s(\pi_2)$. 
It is not clear if $\sT(n)$
can easily be phrased in
terms of the smallest parts of any sort of restricted partitions or 
overpartitions, rather than partition pairs.
Here 
and throughout, we use the following standard
$q$-hypergeometric notation
\begin{align*}
	\aqprod{z}{q}{n} &= \prod_{k=0}^{n-1} (1-zq^k),
	\\
	\aqprod{z}{q}{\infty} &= \prod_{k=0}^{\infty} (1-zq^k),
	\\
	\aqprod{z_1,\dots,z_n}{q}{\infty} &= \aqprod{z_1}{q}{\infty}\dots\aqprod{z_n}{q}{\infty},
	\\
	\jacprod{z}{q} &= \aqprod{z,q/z}{q}{\infty},
	\\
	\jacprod{z_1,\dots,z_n}{q} &= \jacprod{z_1}{q}\dots\jacprod{z_n}{q}.
\end{align*}

We consider a two-variable generalization of $\ST(q)$ given by
\begin{align*}
	\ST(z,q)
	&=	
	\sum_{n=1}^\infty \frac{q^n\aqprod{q^{2n}}{q}{\infty}}
			{\aqprod{zq^n,z^{-1}q^n}{q}{\infty}}
	=
	\sum_{n=1}^\infty\sum_{m=-\infty}^\infty C(m,n)z^mq^n	
,
\end{align*}
so that $\ST(1,q) = \ST(q)$. We define
\begin{align*}
	C(k,t,n) &= \sum_{m\equiv k\pmod{t}} C(m,n)
,
\end{align*}
and so for any positive $t$
\begin{align}\label{EqPPinTermsOfCranks}
	\sT(n) &= \sum_{m=-\infty}^\infty C(m,n)
	= \sum_{k=0}^{t-1} C(k,t,n)
.
\end{align}

We prove Theorem \ref{TheoremCongruences} with the following strategy.
If $\zeta_t$ is a primitive $t^{th}$ root of unity
with $t=3$ or $5$, then
\begin{align*}
	\ST(\zeta_t,q)
	&=
	\sum_{n=1}^\infty\sum_{m=-\infty}^\infty C(m,n)\zeta_t^mq^n	
	=
	\sum_{n=1}^\infty q^n \sum_{k=0}^{t-1} \zeta_t^k C(k,t,n)
.		
\end{align*}
As the minimal polynomial for $\zeta_t$ is $\sum_{k=0}^{t-1}x^k$,
if the coefficient of $q^N$ in $\ST(\zeta_t)$ is zero,
then
$C(0,t,N) = C(1,t,N) \dots = C(t-1,t,N)$.
Thus, by (\ref{EqPPinTermsOfCranks}),
$\sT(N) = t\cdot C(0,t,N)$ and so
$\sT(N)\equiv 0\pmod{t}$.

Theorem \ref{TheoremCongruences} now follows from showing that the 
coefficient of $q^{3n+2}$
in $\ST(\zeta_3,q)$ and the coefficient of $q^{5n+3}$ and $q^{5n+4}$
in $\ST(\zeta_5,q)$ are zero.
In stating our theorems, we need to first define the series:
\begin{align*}
	\Sigma(z,w,q) &= \sum_{n=-\infty}^\infty \frac{q^{2n(n+1)}w^n}{1-zq^n}. 
\end{align*}
For integers $a,b,c$, we will write
\begin{align*}
	\Sigma(a,b,c) &= \Sigma(q^a,q^b,q^c)
.
\end{align*}
We will prove the following theorems about $\ST(\zeta_3,q)$ and 
$\ST(\zeta_5,q)$.
\begin{theorem}\label{theorem2}
\begin{align*}
	\ST(\zeta_3,q)
	&=
	A_0(q^3) + qA_1(q^3) + q^2A_2(q^3)
,
\end{align*}
where
\begin{align*}
	A_0(q) 
	&= 
		\frac{\aqprod{q^{9}}{q^{9}}{\infty}\jacprod{q^3}{q^{9}}}
			{\jacprod{q}{q^{9}}^2\jacprod{q^{4}}{q^{9}}}
		-
		\frac{1}{\aqprod{q^{9}}{q^{9}}{\infty}\jacprod{q^2}{q^{9}}}
		\Parans{\Sigma(1,-14,9) +q\Sigma(1,-5,9)}
		\\&\quad
		-
		\frac{1}{\aqprod{q^{9}}{q^{9}}{\infty}\jacprod{q^{4}}{q^{9}}}
		\Parans{q\Sigma(1,-8,9) +q^5\Sigma(4,1,9)}
	,\\
	A_1(q) &= \frac{\aqprod{q^3}{q^3}{\infty}}{\jacprod{q}{q^3}}
	,\\
	A_2(q) &= 0.
\end{align*}
\end{theorem}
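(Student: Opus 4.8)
The plan is to first establish, by means of Bailey's Lemma, a bilateral-series formula for $\ST(z,q)$ valid for all $z$, and then to specialize $z=\zeta_{3}$ and read off the $3$-dissection with the help of a generalized Lambert series identity of Chan.

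\emph{A Bailey reduction of $\ST(z,q)$.} Using $\aqprod{q^{2n}}{q}{\infty}=\aqprod{q}{q}{\infty}/\aqprod{q}{q}{2n-1}$ and $\aqprod{zq^{n}}{q}{\infty}^{-1}=\aqprod{zq}{q}{n-1}/\aqprod{zq}{q}{\infty}$ (and the same with $z$ replaced by $z^{-1}$), the series defining $\ST(z,q)$ becomes
\[
	\ST(z,q)=\frac{\aqprod{q}{q}{\infty}}{\aqprod{zq,z^{-1}q}{q}{\infty}}
	\sum_{n=1}^{\infty}\frac{q^{n}\,\aqprod{zq}{q}{n-1}\aqprod{z^{-1}q}{q}{n-1}}{\aqprod{q}{q}{2n-1}}.
\]
After a further standard rewriting, the inner sum is matched with the $\beta$-side of a Bailey pair, with the two free parameters in Bailey's Lemma taken to be $zq$ and $z^{-1}q$; the corresponding $\alpha$-side collapses the double sum to a bilateral series, and one obtains an identity of the shape
\[
	\ST(z,q)=\frac{T(q)}{\aqprod{zq,z^{-1}q}{q}{\infty}}
	\sum_{n=-\infty}^{\infty}\frac{(-1)^{n}q^{\kappa n^{2}+\lambda n}z^{\mu n}}{1-zq^{\nu n}}
\]
for a theta quotient $T(q)$ and certain small integers $\kappa,\lambda,\mu,\nu$. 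This single identity feeds both the present $\zeta_{3}$ case and the $\zeta_{5}$ case of the next theorem. Care is first needed in matching the inner sum to the correct Bailey pair and in simplifying the resulting product on the $\alpha$-side.

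\emph{Specializing and dissecting.} With $z=\zeta_{3}$ one has $\aqprod{\zeta_{3}q,\zeta_{3}^{-1}q}{q}{\infty}=\aqprod{q^{3}}{q^{3}}{\infty}/\aqprod{q}{q}{\infty}$, so $\ST(\zeta_{3},q)$ equals $T(q)\aqprod{q}{q}{\infty}/\aqprod{q^{3}}{q^{3}}{\infty}$ times a concrete Lambert series $L(q)=\sum_{n}(-1)^{n}q^{\kappa n^{2}+\lambda n}/(1-\zeta_{3}q^{\nu n})$. Chan's generalized Lambert series identity is then applied to express $L(q)$ as a combination of finitely many theta quotients plus a residual sum with pure powers of $q$ in the denominators; this is the step that does the genuine ``partial fractioning'' past the cube root of unity. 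Writing each summation index in the residual as $3k+r$ with $r\in\{0,1,2\}$ and completing the square in the quadratic exponent reorganizes it into the building blocks $\Sigma(q^{a},q^{b},q^{9})$, the parameters $a\in\{1,4\}$ and $b\in\{-14,-5,-8,1\}$ in the statement arising from $\nu r$ and from the completed square after reduction to the relevant ranges. Collecting every contribution according to the residue of the overall power of $q$ modulo $3$ yields $\ST(\zeta_{3},q)=A_{0}(q^{3})+qA_{1}(q^{3})+q^{2}A_{2}(q^{3})$.

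\emph{Identifying the components, and the main obstacle.} The product (theta-quotient) part of the residue class $r=1$ collapses, via the Jacobi triple product identity, to $\aqprod{q^{3}}{q^{3}}{\infty}/\jacprod{q}{q^{3}}$, which is $A_{1}$, and this class receives no $\Sigma$-contribution. The product part of the class $r=0$ collapses, after an application of the quintuple product identity, to the first term displayed in $A_{0}$, while its Lambert residuals are exactly the stated $\Sigma$-terms. Finally $A_{2}=0$ is the assertion that the contributions to the class $r=2$ sum to zero; this is the analytic content of the congruence $\sT(3n+2)\equiv0\pmod 3$, and, like the simplification of $A_{0}$, it is established by theta-function manipulations (the quintuple product identity together, where needed, with a further specialization of Chan's identity). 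I expect the main obstacle to be two-fold: arranging in the Bailey step that the resulting bilateral series has a summand simple enough for the modulo-$3$ split to be clean, and carrying out the theta-function algebra of the last step---in particular the quintuple-product simplification inside $A_{0}$ and the vanishing $A_{2}=0$. The remainder is lengthy but routine $q$-series bookkeeping.
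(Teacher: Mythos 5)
Your overall plan (Bailey's Lemma to produce a bilateral series, specialize at $\zeta_3$, dissect, and use Chan's identity) is the same strategy the paper follows, but the pivotal intermediate identity is stated in a form that breaks the rest of the argument. The Bailey pair relative to $(1,q)$ with $\beta_n=1/\aqprod{q}{q}{2n-1}$, inserted into Bailey's Lemma with $\rho_1=z$, $\rho_2=z^{-1}$, gives
\begin{align*}
\ST(z,q)
&=
\frac{1}{\aqprod{q}{q}{\infty}}
\sum_{n=-\infty}^{\infty}
\frac{q^{6n^2+4n+1}\Parans{1-q^{6n+2}}}
{\Parans{1-zq^{3n+1}}\Parans{1-z^{-1}q^{3n+1}}},
\end{align*}
with no surviving $\aqprod{zq,z^{-1}q}{q}{\infty}$ prefactor, no $(-1)^n z^{\mu n}$ in the numerator, and --- crucially --- a denominator symmetric under $z\leftrightarrow z^{-1}$. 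That symmetry is what eliminates the root of unity: at $z=\zeta_3$ one has $(1-\zeta_3x)(1-\zeta_3^{-1}x)=(1-x^3)/(1-x)$ with $x=q^{3n+1}$, so the summand becomes $q^{6n^2+4n+1}(1-q^{6n+2})(1-q^{3n+1})/(1-q^{9n+3})$, a generalized Lambert series in $q$ alone, which can then be split into the $U_3(b)$ and dissected by writing $n\mapsto 3n+k$. Your version instead keeps a single factor $1-\zeta_3q^{\nu n}$ in the denominator and asserts that Chan's identity does the ``partial fractioning past the cube root of unity.'' Chan's theorem does not perform that service: in the paper it is applied only after the root of unity has already disappeared, to root-of-unity-free series $\SSeries{q^a}{q^b}{q^c}$, in order to prove the dissection identities (Propositions \ref{Prop1Zeta3} and \ref{Prop2Zeta3}) that trade specific combinations of $\Sigma$'s for theta quotients. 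Without the symmetric denominator there is no mechanism in your outline that produces the stated $\Sigma(q^a,q^b,q^9)$ with integer parameters, so this is a genuine gap rather than a bookkeeping issue.

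Two smaller inaccuracies: the final product simplifications (the first term of $A_0$, the appearance of $A_1$, and the vanishing of $A_2$) are carried out not with the quintuple product identity but with the $3$-dissection of $\aqprod{q}{q}{\infty}$ combined with a Jacobi theta identity (Proposition \ref{Prop3Zeta3}); and $A_1$ comes out of that same product identity rather than from a residue class that ``receives no contribution'' by default.
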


\begin{theorem}\label{theorem3}
\begin{align*}
	\ST(\zeta_5,q)
	&=
	B_0(q^5) + qB_1(q^5) + q^2B_2(q^5) + q^3B_3(q^5) + q^4B_4(q^5)
,
\end{align*}
where
\begin{align*}
	B_0(q) 
	&= 
		-\frac{\zeta_5+\zeta_5^4}{\aqprod{q^5}{q^5}{\infty}}
		\Parans{q^{15}\Sigma(7,10,15) +q^{25}\Sigma(13,25,15)}
	,\\
	B_1(q) 
	&= 
		-\frac{1}{\aqprod{q^5}{q^5}{\infty}}
		\Parans{q^{13}\Sigma(10,10,15) +q^{23}\Sigma(10,25,15)}
		+\frac{\aqprod{q^3}{q^3}{\infty}^3}
			{\aqprod{q}{q}{\infty}\aqprod{q^5}{q^5}{\infty}}
	,\\
	B_2(q) 
	&= 
		\frac{1+\zeta_5+\zeta_5^4}{\aqprod{q^5}{q^5}{\infty}}
		\Parans{\Sigma(1,-20,15) +q^4\Sigma(4,-5,15)}	
	,\\
	B_3(q) &= 0
	,\\
	B_4(q) &= 0
.
\end{align*}
\end{theorem}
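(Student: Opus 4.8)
The plan is to treat Theorem~\ref{theorem3} as the modulus-$5$ analogue of Theorem~\ref{theorem2}. The first task is to produce a ``Lambert series'' representation of the full two-variable function $\ST(z,q)$ valid for generic $z$. Starting from
\[
	\ST(z,q)=\sum_{n=1}^\infty \frac{q^n\aqprod{q^{2n}}{q}{\infty}}{\aqprod{zq^n,z^{-1}q^n}{q}{\infty}},
\]
I would rewrite the summand so that it displays the structure of $\beta_n$ (times a simple factor) for an explicit Bailey pair, and then apply a limiting case of Bailey's Lemma. This transforms the sum on $n$ into its companion sum, which in turn -- after being recast via Chan's generalized Lambert series identity -- can be written as a theta quotient in $z$ and $q$ plus a linear combination of series of the form $\sum_k q^{ak^2+bk}w^k/(1-zq^k)$. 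These are precisely the building blocks behind $\Sigma(z,w,q)$.

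With such a representation in hand, I would specialize $z=\zeta_5$. The theta and eta factors collapse under the Jacobi triple product together with $\prod_{k=0}^{4}(1-\zeta_5^k x)=1-x^5$, so that every surviving term is an honest generalized Lambert series in the single variable $q$. I would then carry out the $5$-dissection $\ST(\zeta_5,q)=\sum_{j=0}^{4}q^jB_j(q^5)$ by sorting the summation index $k$ according to $k\bmod 5$ and expanding $1/(1-\zeta_5 q^k)$ geometrically. Each dissected piece $\sum_k q^{ak^2+bk}/(1-\zeta_5 q^k)$ then splits, again by a Chan-type Lambert series manipulation, into a pure theta contribution -- which, after reconciliation through standard eta-quotient and quintuple-product identities, produces the eta-quotient parts of $B_0$ and $B_2$ and the term $\aqprod{q^3}{q^3}{\infty}^3\big/\Parans{\aqprod{q}{q}{\infty}\aqprod{q^5}{q^5}{\infty}}$ in $B_1$ -- together with a residual Lambert series of the same shape but now on base $q^{15}=(q^5)^3$; these residual series are exactly the $\Sigma(\cdot,\cdot,15)$ terms appearing in the statement. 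Collecting the contributions landing in residues $j=0,1,2$ then yields the asserted formulas for $B_0$, $B_1$, $B_2$.

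The cases $j=3$ and $j=4$ are the crux, since $B_3=B_4=0$ is precisely what produces $\sT(5n+3)\equiv\sT(5n+4)\equiv 0\pmod{5}$. For these residue classes I expect every surviving contribution to vanish: the pure theta pieces disappear because the relevant residue class is empty in the underlying Jacobi triple product expansion, while the residual Lambert pieces cancel in pairs under the substitution $k\mapsto -k-1$, which fixes the quadratic exponent $q^{2k(k+1)}$. Establishing this vanishing cleanly -- and keeping careful track of which residual $\Sigma$-series lands in which class modulo $5$ -- is the main obstacle; the remainder of the argument is routine bookkeeping once the dissection machinery is assembled. The same computation with $\zeta_3$ in place of $\zeta_5$ gives Theorem~\ref{theorem2}, with $A_2=0$ arising for the same reason that $B_3=B_4=0$.
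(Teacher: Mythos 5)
Your setup matches the paper's: the Bailey pair relative to $(1,q)$ and the limiting case of Bailey's Lemma give exactly the representation $\ST(z,q)=\frac{1}{\aqprod{q}{q}{\infty}}\sum_{n}\frac{q^{6n^2+4n+1}(1-q^{6n+2})}{(1-zq^{3n+1})(1-z^{-1}q^{3n+1})}$, the denominators collapse at $z=\zeta_5$ via $\prod_{k=0}^{4}(1-\zeta_5^kx)=1-x^5$ to give a combination of the series $U_5(b)$, and dissecting the summation index modulo $5$ produces twenty series $\SSeries{15k+5}{60k+5b-150}{75}$ per $U_5(b)$. Up to that point you are reproducing Sections 2 and 4 of the paper. (One small inconsistency: after the collapse there is no $1/(1-\zeta_5 q^k)$ left to expand geometrically; the denominators are $1-q^{15n+5}$.)

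The genuine gap is the mechanism you offer for $B_3=B_4=0$, which is the heart of the theorem. A pairwise cancellation under $n\mapsto -n-1$ does not occur: that substitution sends $\SSeries{z}{w}{q}$ to a constant times $\SSeries{z^{-1}q}{w^{-1}q}{q}$ (identity (\ref{SigmaProperty3})), i.e.\ it permutes the dissected pieces rather than cancelling them within a fixed residue class, and the surviving $\Sigma$-terms in $B_0,B_1,B_2$ show that no such wholesale cancellation is available. What the paper actually does is group the twenty $\Sigma$-terms of each $U_5$-combination into blocks of four and apply three delicate identities (Propositions \ref{Prop1Zeta5}--\ref{Prop3Zeta5}), each proved by a carefully parametrized specialization of Chan's generalized Lambert series identity (Lemmas \ref{ChanLemma3} and \ref{ChanLemma4}, with $s=6,r=2$ and $s=10,r=6$). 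These convert each block into $\pm\frac{\jacprod{q}{q^5}}{\jacprod{q^2}{q^5}}$ or $\mp\frac{\jacprod{q^2}{q^5}}{\jacprod{q}{q^5}}$ times two residual $\Sigma$-terms, plus eta-quotients; only then does the $5$-dissection of $\aqprod{q}{q}{\infty}$ let one factor out $\aqprod{q}{q}{\infty}$ and read off that everything lands in residues $0,1,2$. Without producing these identities (or equivalents), your argument cannot establish the vanishing of $B_3$ and $B_4$, nor the explicit forms of $B_0$, $B_1$, $B_2$; the step you label ``routine bookkeeping'' is in fact where all the work lies.
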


The partition pairs from Section 3 of \cite{GarvanJennings}, 
were an intermediate step to defining a crank on the smallest parts
of overpartitions. This crank gave 
a combinatorial interpretation of the congruences for
$\sptBar{}{n}$, $\sptBar{1}{n}$, and $\sptBar{2}{n}$.
In \cite{GarvanJennings} we let $k(\pi_1,\pi_2)$ denote the number of parts 
of $\pi_2$ that are between $s(\pi_1)$ and $2s(\pi_1)-1$
and defined
\begin{align*}
	\overline{\mbox{crank}}(\pi_1,\pi_2)
	&=
	\left\{
	\begin{array}{ll}
		(\#\mbox{ of parts of }\pi_1) - 1 		
		& \mbox{ if } k(\pi_1,\pi_2) = 0
		\\
		(\#\mbox{ of parts of }\pi_1 \ge s(\pi_1)+k(\pi_1,\pi_2)) - k(\pi_1,\pi_2) 		
		& \mbox{ if } k(\pi_1,\pi_2) > 0.
	\end{array}
	\right.
\end{align*}
For the partition pairs in this article, $k(\pi_1,\pi_2)$ is $\#(\pi_2)$
and so we define
\begin{align*}
	\mbox{paircrank}(\pi_1,\pi_2)
	&=
	\left\{
	\begin{array}{ll}
		(\#\mbox{ of parts of }\pi_1) - 1 		
		& \mbox{ if } \#(\pi_2) = 0
		\\
		(\#\mbox{ of parts of }\pi_1 \ge s(\pi_1)+\#(\pi_2)) - \#(\pi_2) 		
		& \mbox{ if } \#(\pi_2) > 0.
	\end{array}
	\right.
\end{align*}
This paircrank yields a combinatorial refinement of the congruences for
$\sT(n)$.

\begin{theorem}\label{crankTheorem}
\begin{enumerate}
\item[(i)]
The residue of the ${\normalfont\mbox{paircrank}}$ mod $3$ divides the partition pairs
from $\ST$
of $3n+2$ into $3$ equal classes.
\item[(ii)]
The residue of the ${\normalfont\mbox{paircrank}}$ mod $5$ divides the partition pairs
from $\ST$ of $5n+3$ into $5$ equal classes.
\item[(iii)]
The residue of the ${\normalfont\mbox{paircrank}}$ mod $5$ divides the partition pairs
from $\ST$ of $5n+4$ into $5$ equal classes.
\end{enumerate}
\end{theorem}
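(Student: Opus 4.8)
The plan is to upgrade the root-of-unity vanishing statements of Theorems \ref{theorem2} and \ref{theorem3} into the stronger equidistribution statement via the two-variable generating function $\ST(z,q)$ and the argument already sketched after Theorem \ref{TheoremCongruences}. The key point is that the coefficients $C(m,n)$ in $\ST(z,q)=\sum_{n,m}C(m,n)z^mq^n$ are exactly the enumeration of the partition pairs in $\ST$ of $n$ weighted by a crank statistic, and I must first verify that this crank statistic is precisely $\mbox{paircrank}$. Concretely, I would expand the summand $\frac{q^n\aqprod{q^{2n}}{q}{\infty}}{\aqprod{zq^n,z^{-1}q^n}{q}{\infty}}$ combinatorially: the factor $\frac{1}{\aqprod{zq^n}{q}{\infty}}$ generates the part(s) of $\pi_1$ that are $\ge s(\pi_1)$ (with $s(\pi_1)=n$ fixed by the outer sum), tracking their number in the exponent of $z$, while $\frac{\aqprod{q^{2n}}{q}{\infty}}{\aqprod{z^{-1}q^n}{q}{\infty}}$ generates $\pi_2$ with all parts in $[n,2n)$, tracking the number of parts of $\pi_2$ in the exponent of $z^{-1}$. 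After the standard bookkeeping that extracts $s(\pi_1)$ and shifts indices, the exponent of $z$ in a given monomial is seen to be $(\#\text{ of parts of }\pi_1\ge s(\pi_1)+\#(\pi_2))-\#(\pi_2)$ when $\#(\pi_2)>0$, and $(\#\text{ of parts of }\pi_1)-1$ when $\#(\pi_2)=0$; that is, $C(m,n)=\#\{(\pi_1,\pi_2)\in\ST \text{ of }n : \mbox{paircrank}(\pi_1,\pi_2)=m\}$. This identification is the conceptual heart of the argument and the step I expect to require the most care, since it amounts to reconstructing the combinatorial model for $\ST(z,q)$ rather than manipulating it formally; it parallels the analogous computation in Section 3 of \cite{GarvanJennings}.

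Once this combinatorial interpretation of $C(m,n)$ is in hand, the rest is the root-of-unity dissection already described. For part (i), set $z=\zeta_3$; by Theorem \ref{theorem2} the coefficient of $q^{3n+2}$ in $\ST(\zeta_3,q)$ is zero, because $A_2(q)=0$. Hence $\sum_{k=0}^{2}\zeta_3^k C(k,3,3n+2)=0$, and since $1+x+x^2$ is the minimal polynomial of $\zeta_3$ over $\mathbb{Q}$ and the $C(k,3,3n+2)$ are rational integers, we conclude $C(0,3,3n+2)=C(1,3,3n+2)=C(2,3,3n+2)$. By \eqref{EqPPinTermsOfCranks} these three equal integers sum to $\sT(3n+2)$, so each class has size $\tfrac13\sT(3n+2)$; since $C(k,3,n)=\sum_{m\equiv k\ (3)}C(m,n)$ counts exactly the partition pairs in $\ST$ of $n$ whose $\mbox{paircrank}$ is $\equiv k\pmod 3$, this is the claimed partition into three equal classes.

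Parts (ii) and (iii) are identical with $t=5$ and $\zeta_5$ in place of $\zeta_3$. By Theorem \ref{theorem3}, $B_3(q)=B_4(q)=0$, so the coefficients of $q^{5n+3}$ and $q^{5n+4}$ in $\ST(\zeta_5,q)$ vanish; therefore $\sum_{k=0}^{4}\zeta_5^k C(k,5,N)=0$ for $N=5n+3$ and $N=5n+4$. Using that $1+x+x^2+x^3+x^4$ is the minimal polynomial of $\zeta_5$ over $\mathbb{Q}$ and that the $C(k,5,N)$ are integers, we again get $C(0,5,N)=C(1,5,N)=\cdots=C(4,5,N)$, each therefore equal to $\tfrac15\sT(N)$ by \eqref{EqPPinTermsOfCranks}. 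This says precisely that the residue of $\mbox{paircrank}$ mod $5$ splits the partition pairs from $\ST$ of $5n+3$, and those of $5n+4$, into five equal classes, completing the proof. (As a byproduct this reproves Theorem \ref{TheoremCongruences}, but that deduction was already given in the introduction.)
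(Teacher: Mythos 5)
Your overall architecture matches the paper's: first establish that $C(m,n)$ counts the partition pairs from $\ST$ of $n$ with paircrank $m$, then run the root-of-unity dissection using $A_2=B_3=B_4=0$ together with the minimal-polynomial argument and \eqref{EqPPinTermsOfCranks}. That second half of your write-up is correct and is exactly what the paper does. The gap is in the first half, which you rightly single out as the heart of the matter but whose execution, as you sketch it, does not work. The factor $\frac{\aqprod{q^{2n}}{q}{\infty}}{\aqprod{z^{-1}q^{n}}{q}{\infty}}$ is \emph{not} the generating function for partitions $\pi_2$ with all parts in $[n,2n)$ weighted by $z^{-\#(\pi_2)}$; that generating function is $\frac{1}{\aqprod{z^{-1}q^{n}}{q}{n}}=\frac{\aqprod{z^{-1}q^{2n}}{q}{\infty}}{\aqprod{z^{-1}q^{n}}{q}{\infty}}$, whose numerator carries a $z^{-1}$ that the actual summand lacks. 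More seriously, even if the summand did factor as you describe, the statistic you would read off is $(\#(\pi_1)-1)-\#(\pi_2)$, which is not the paircrank: when $\#(\pi_2)=k>0$ the paircrank counts only those parts of $\pi_1$ that are at least $s(\pi_1)+k$, and this coupling between the size of $\pi_2$ and a threshold inside $\pi_1$ is simply not visible in the product form of the summand. No index shift or ``standard bookkeeping'' applied term by term to the product as written will produce it.

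The missing ingredient is a genuine $q$-series transformation. The paper applies the $q$-binomial theorem (Theorem 2.1 of \cite{AndrewsBook}) in the form $\frac{\aqprod{q^{2n}}{q}{\infty}}{\aqprod{z^{-1}q^{n}}{q}{\infty}}=\sum_{k\ge 0}\frac{\aqprod{zq^{n}}{q}{k}}{\aqprod{q}{q}{k}}z^{-k}q^{nk}$, and after absorbing $\aqprod{zq^{n}}{q}{k}$ into $\aqprod{zq^{n}}{q}{\infty}$ and rearranging obtains
\begin{align*}
	\ST(z,q)
	&=
	\sum_{n=1}^\infty \frac{q^{n}}{\aqprod{zq^{n}}{q}{\infty}}
	+
	\sum_{n=1}^\infty\sum_{k=1}^\infty
	\frac{q^{n}}{\aqprod{q^{n}}{q}{k}\aqprod{zq^{n+k}}{q}{\infty}}
	\cdot
	\frac{z^{-k}q^{kn}\aqprod{q}{q}{n+k-1}}{\aqprod{q}{q}{n-1}\aqprod{q}{q}{k}}
	.
\end{align*}
Only in this form does the paircrank appear: in the double sum the first factor generates $\pi_1$ with smallest part $n$, with $z$ marking exactly the parts that are $\ge n+k$, while the second factor is $z^{-k}q^{kn}$ times a Gaussian binomial coefficient generating the partitions $\pi_2$ with exactly $k$ parts, each in $[n,2n-1]$; the net exponent of $z$ is then precisely $\mbox{paircrank}(\pi_1,\pi_2)$, and the first single sum handles the case $\pi_2=\emptyset$. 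You need to supply this transformation (or an equivalent bijective re-indexing of $\pi_1$ according to $k=\#(\pi_2)$) before you may assert $C(m,n)=\#\CBrackets{(\pi_1,\pi_2)\in\ST \mbox{ of } n : \mbox{paircrank}(\pi_1,\pi_2)=m}$; with that identity in place, the rest of your argument goes through verbatim.
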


In Section 2, we provide an alternative expression for $\ST(z,q)$. 
In Sections 3 and 4, we prove Theorems \ref{theorem2} and
\ref{theorem3}, respectively. In Section 5, we prove
Theorem \ref{crankTheorem}
by showing that $C(m,n)$ is the number of partition
pairs from $\ST$ of $n$ with paircrank $m$.

\section{Preliminaries}

Before proving Theorems $\ref{theorem2}$ and $\ref{theorem3}$, we require
another form for $\ST(z,q)$. We recall a pair of sequences
$(\alpha,\beta)$ are a Bailey pair relative to $(a,q)$ if
\begin{align*}
	\beta_n &= \sum_{k=0}^n \frac{\alpha_k}{\aqprod{q}{q}{n-k}\aqprod{aq}{q}{n+k}}
.
\end{align*}
A limiting case of Bailey's lemma states that if $(\alpha,\beta)$ are a 
Bailey pair for $(a,q)$, then
\begin{align*}
	\sum_{n=0}^\infty 
		\aqprod{\rho_1,\rho_2}{q}{n}\Parans{\frac{aq}{\rho_1\rho_2}}^n\beta_n
	&=
	\frac{\aqprod{aq/\rho_1,aq/\rho_2}{q}{\infty}}{\aqprod{aq,aq/(\rho_1\rho_2)}{q}{\infty}}
	\sum_{n=0}^\infty 
	\frac{\aqprod{\rho_1,\rho_2}{q}{n}\Parans{\frac{aq}{\rho_1\rho_2}}^n\alpha_n}
		{\aqprod{aq/\rho_1,aq/\rho_2}{q}{n}}
.
\end{align*}

\begin{proposition}\label{PropositionBaileyPair}
The pair $(\alpha,\beta)$ where
\begin{align*}
	\beta_n &= \frac{1}{\aqprod{q}{q}{2n-1}},
	&
	\alpha_{3n} &= 0, 
	&
	\alpha_{3n\pm 1} &=	q^{6n^2\pm n} - q^{6n^2\pm 7n + 2},
\end{align*}
is a Bailey pair relative to $(1,q)$. Here 
$\alpha_0=\beta_0=0$.
\end{proposition}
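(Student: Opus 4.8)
The plan is to verify the Bailey pair relation directly: show that with $\beta_n = 1/\aqprod{q}{q}{2n-1}$ and the stated $\alpha_n$, the defining sum
\begin{align*}
	\beta_n = \sum_{k=0}^n \frac{\alpha_k}{\aqprod{q}{q}{n-k}\aqprod{q}{q}{n+k}}
\end{align*}
holds (here $a=1$, so $\aqprod{aq}{q}{n+k} = \aqprod{q}{q}{n+k}$). Rather than attacking this head-on, I would recognize the $\alpha$ as coming from a known Bailey pair. The quintic-type exponents $6n^2\pm n$ strongly suggest the Rogers–Ramanujan / pentagonal-number machinery: up to the substitution $q\to q$ and reindexing, $\alpha_{3n\pm1} = q^{6n^2\pm n}(1 - q^{6n\pm 2})$ is a telescoped form of a theta series with the $n\equiv 0\pmod 3$ terms deleted. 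So the cleanest route is to exhibit this $(\alpha,\beta)$ as obtained from a standard unit Bailey pair (the pair with $\beta_n = \delta_{n,0}$, $\alpha_n = (-1)^n q^{\binom n2}(1+q^n)/(1-q)$-type data relative to $(1,q)$, or one of Slater's list) via a known construction — for instance, inserting it into Bailey's lemma with a special choice of $\rho_1,\rho_2$, or using the iteration $\beta_n' = \sum \dots$. Concretely, I expect $\beta_n = 1/\aqprod{q}{q}{2n-1}$ to be what results from applying a limiting Bailey-chain step to a pair whose $\alpha$ is a plain theta function; one then reads off that the $n\equiv0$ terms must vanish because of a $(1-q^{3n})$-type factor, matching $\alpha_{3n}=0$.

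Failing a slick identification, the direct approach is still finite and mechanical. Define $S_n = \sum_{k=0}^n \alpha_k / \big(\aqprod{q}{q}{n-k}\aqprod{q}{q}{n+k}\big)$ and prove $S_n = 1/\aqprod{q}{q}{2n-1}$ by showing both sides satisfy the same first-order recurrence in $n$ with the same initial value. The right side satisfies $1/\aqprod{q}{q}{2n+1} = 1/\aqprod{q}{q}{2n-1} \cdot 1/\big((1-q^{2n})(1-q^{2n+1})\big)$. For the left side I would use the standard trick of writing $\aqprod{q}{q}{n+k}^{-1}$ and $\aqprod{q}{q}{n-k}^{-1}$ in terms of their values at $n-1$, so that $S_n$ becomes a combination of $S_{n-1}$ and one or two boundary terms (the $k=n$ and $k=n-1$ contributions); the boundary terms involve $\alpha_n$ and $\alpha_{n-1}$ explicitly, and here the three-term pattern of $\alpha$ ($0$ on multiples of $3$, the binomial-exponent expression otherwise) gets used. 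Summing the resulting identity over residues of $n$ mod $3$, the contributions telescope against each other, leaving exactly the claimed closed form. The check that $\alpha_0 = \beta_0 = 0$ is the stated convention and handles the degenerate $n=0$ case (where $\aqprod{q}{q}{-1}$ would otherwise be problematic).

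The main obstacle I anticipate is bookkeeping: matching the index shift $k = 3n\pm1$ against the summation variable in $S_n$, and in particular handling the sign pattern and the $q^{6n^2\pm7n+2} = q^{6n^2\pm n}\cdot q^{\pm6n+2}$ factorization so that adjacent terms of the $\alpha$-sequence cancel in pairs. A secondary subtlety is the treatment of $\aqprod{q}{q}{2n-1}$ versus $\aqprod{q}{q}{2n}$ — the ``$-1$'' in the subscript is deliberate (it is what makes $\beta_1 = 1/\aqprod{q}{q}{1}$ come out right and what forces $\beta_0$ to be read as $0$ rather than $1$ under the convention $\aqprod{q}{q}{-1} := \infty$ or by fiat), so I would be careful to state that convention explicitly before running the recurrence. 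Once the recurrence and one initial value ($n=1$: $\beta_1 = \alpha_1/\aqprod{q}{q}{2} = (q - q^2)/\aqprod{q}{q}{2}$, check it equals $1/(1-q)$) are verified, the proposition follows by induction.
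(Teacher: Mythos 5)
There is a genuine gap: neither route you sketch is actually carried out, and the one concrete mechanism you propose does not work as described. The paper verifies the defining relation
\begin{align*}
\sum_{k=0}^N \frac{\alpha_k}{\aqprod{q}{q}{N-k}\aqprod{q}{q}{N+k}} &= \frac{1}{\aqprod{q}{q}{2N-1}}
\end{align*}
by using $\alpha_{3n}=0$ and the substitution $k=3n\pm1$ to rewrite the left side as a single bilateral sum over $n$, and then evaluating that bilateral sum in closed form with a specialization of Bailey's ${}_6\psi_6$ summation (Slater's identity with $q\mapsto q^3$, $a=q^2$, $b=q^{1-N}$, $c=q^{2-N}$, $d=q^{3-N}$, $e\to\infty$; the choice $b=q^{1-N}$ makes the sum terminate). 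A closed-form summation of this strength is the essential ingredient, and your proposal never supplies one. Your first route (recognizing the pair as the output of a Bailey-chain step) is left entirely speculative. Your second route fails at the step where you claim $S_n$ ``becomes a combination of $S_{n-1}$ and one or two boundary terms'': writing $\aqprod{q}{q}{n\pm k}=\aqprod{q}{q}{n-1\pm k}(1-q^{n\pm k})$ introduces the factor $(1-q^{n-k})^{-1}(1-q^{n+k})^{-1}$, which depends on $k$, so the sum does not reduce to $S_{n-1}$ plus boundary terms; the natural partial-fraction split $\frac{1}{(1-x)(1-y)}=\frac{1}{1-xy}\left(\frac{1}{1-x}+\frac{1}{1-y}-1\right)$ with $x=q^{n-k}$, $y=q^{n+k}$ yields three new sums, none of which is $S_{n-1}$. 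There is no first-order recurrence and no telescoping of the kind you describe, so the induction never gets off the ground.

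A smaller but telling error: your initial-value check is wrong. From $\alpha_{3n+1}=q^{6n^2+n}-q^{6n^2+7n+2}$ with $n=0$ one gets $\alpha_1=1-q^2$, not $q-q^2$; then $S_1=\alpha_1/\aqprod{q}{q}{2}=1/(1-q)=\beta_1$ as required, whereas your stated value gives $q/(1-q^2)\ne 1/(1-q)$. The conventions you worry over ($\beta_0=0$ and the subscript $2n-1$) are handled in the paper simply by declaring $\alpha_0=\beta_0=0$ and proving the relation for $N>0$. To repair the argument you would need either to identify the specific terminating very-well-poised summation that evaluates the bilateral sum, or to use the explicit Bailey-pair inversion expressing $\alpha_n$ in terms of the $\beta_j$ and evaluate that sum instead; in either case the heart of the proof is a nontrivial $q$-series summation that your proposal does not name.
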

\begin{proof}
The following is (2.1) of \cite{Slater},
\begin{align*}
	&\sum_{n=-\infty}^\infty
	\frac{(1-aq^{2n})\aqprod{b,c,d,e}{q}{n}a^{2n}q^n}
	{(1-a)\aqprod{aq/b,aq/c,aq/d,aq/e}{q}{n}b^nc^nd^ne^n}
	\\
	&=
	\frac{\aqprod{q,q/a,qa,aq/bc,aq/,bd,aq/be,aq/cd,aq/de}{q}{\infty}}
	{\aqprod{q/b,q/c,q/d,q/e,aq/b,aq/c,aq/d,aq/e,a^2q/bcde}{q}{\infty}}
.
\end{align*}
With $q\mapsto q^3$, $b=q^{1-N}$, $c=q^{2-N}$, $d=q^{3-N}$, $a=q^2$, and
$e\rightarrow\infty$, this becomes
\begin{align*}
	\sum_{n=-\infty}^\infty
	\frac{(1-q^{6n+2})\aqprod{q^{1-N}}{q}{3n}(-1)^nq^{\frac{3n(n-1)}{2} + n + 3nN}}
		{(1-q^2)\aqprod{q^{2+N}}{q}{3n}}
	&=
	\frac{\aqprod{q^3,q,q^5}{q^3}{\infty}\aqprod{q^{2N}}{q}{\infty}}
	{\aqprod{q^N,q^{N+2}}{q}{\infty}}
.
\end{align*}
Here the sum is actually finite as we see the summands are eventually zero for large
positive and large negative values of $n$.
With this in mind, we have for $N>0$
\begin{align*}
	&\sum_{n=0}^N \frac{\alpha_n}{\aqprod{q}{q}{N+n}\aqprod{q}{q}{N-n}}
	\\
	&=
	\frac{\alpha_1}{\aqprod{q}{q}{N-1}\aqprod{q}{q}{N+1}}
	+
	\sum_{n=1}^\infty
		\frac{\alpha_{3n-1}}{\aqprod{q}{q}{N-3n+1}\aqprod{q}{q}{N+3n-1}}
		+
		\frac{\alpha_{3n+1}}{\aqprod{q}{q}{N-3n-1}\aqprod{q}{q}{N+3n+1}}
	\\
	&=
	\sum_{n=-\infty}^\infty \frac{q^{6n^2+n} - q^{6n^2+7n+2}}
		{\aqprod{q}{q}{N-3n-1}\aqprod{q}{q}{N+3n+1}}
	\\
	&=
	\sum_{n=-\infty}^\infty 
		\frac{q^{6n^2+n}(1-q^{6n+2})\aqprod{q^{1-N}}{q}{3n} 
			(-1)^n q^{ -\bin{3n}{2} + 3n(N-1)}
		}
		{\aqprod{q}{q}{N-1}\aqprod{q}{q}{N+1}\aqprod{q^{N+2}}{q}{3n}}
	\\
	&=
	\frac{1}{\aqprod{q}{q}{N-1}\aqprod{q}{q}{N+1}}
	\sum_{n=-\infty}^\infty 
		\frac{q^{6n^2+n}(1-q^{6n+2})\aqprod{q^{1-N}}{q}{3n} 
			(-1)^n q^{ \frac{3n(n-1)}{2} + n + 3nN}
		}
		{\aqprod{q^{N+2}}{q}{3n}}
	\\
	&= 
	\frac{(1-q^2)}{\aqprod{q}{q}{N-1}\aqprod{q}{q}{N+1}}
	\cdot
	\frac{\aqprod{q^3,q,q^5}{q^3}{\infty}\aqprod{q^{2N}}{q}{\infty}}
	{\aqprod{q^N,q^{N+2}}{q}{\infty}}
	\\
	&= 
	\frac{\aqprod{q,q^{2N}}{q}{\infty}}
	{\aqprod{q,q}{q}{\infty}}
	\\
	&=
	\frac{1}{\aqprod{q}{q}{2N-1}}
.
\end{align*}

Thus, the result follows.
\end{proof}

\begin{corollary}
\begin{align*}
	\ST(z,q)
	&=
	\frac{1}{\aqprod{q}{q}{\infty}}	
	\sum_{n=-\infty}^\infty \frac{q^{6n^2+4n+1}(1-q^{6n+2})}
		{(1-zq^{3n+1})(1-z^{-1}q^{3n+1})}
.
\end{align*}
\end{corollary}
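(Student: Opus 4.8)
The plan is to apply the quoted limiting case of Bailey's lemma to the Bailey pair $(\alpha,\beta)$ of Proposition~\ref{PropositionBaileyPair}, with the specialization $a=1$, $\rho_1=z$, and $\rho_2=z^{-1}$. Then $aq/\rho_1\rho_2 = q$, so the identity reads
\begin{align*}
	\sum_{n=0}^\infty \aqprod{z,z^{-1}}{q}{n}q^n\beta_n
	=
	\frac{\aqprod{qz^{-1},qz}{q}{\infty}}{\aqprod{q,q}{q}{\infty}}
	\sum_{n=0}^\infty \frac{\aqprod{z,z^{-1}}{q}{n}q^n\alpha_n}{\aqprod{qz^{-1},qz}{q}{n}} .
\end{align*}

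First I would put both sides in closed form. On the left, since $\beta_0 = 0$ and $\beta_n = 1/\aqprod{q}{q}{2n-1} = \aqprod{q^{2n}}{q}{\infty}/\aqprod{q}{q}{\infty}$ for $n\ge 1$, and since $\aqprod{z,z^{-1}}{q}{n} = \aqprod{z,z^{-1}}{q}{\infty}/\aqprod{zq^{n},z^{-1}q^{n}}{q}{\infty}$, the left-hand side collapses termwise to $\aqprod{z,z^{-1}}{q}{\infty}\aqprod{q}{q}{\infty}^{-1}\ST(z,q)$ by the defining series for $\ST(z,q)$. On the right, the telescoping identity $\aqprod{z}{q}{n}/\aqprod{qz}{q}{n} = (1-z)/(1-zq^{n})$ together with its $z\mapsto z^{-1}$ mirror gives $\aqprod{z,z^{-1}}{q}{n}/\aqprod{qz,qz^{-1}}{q}{n} = (1-z)(1-z^{-1})/\bigl((1-zq^{n})(1-z^{-1}q^{n})\bigr)$, and the same telescoping at $n=\infty$ gives $\aqprod{z,z^{-1}}{q}{\infty} = (1-z)(1-z^{-1})\aqprod{qz,qz^{-1}}{q}{\infty}$. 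Substituting these and cancelling the common factor $(1-z)(1-z^{-1})\aqprod{qz,qz^{-1}}{q}{\infty}$ from both sides, together with one copy of $\aqprod{q}{q}{\infty}$, leaves
\begin{align*}
	\ST(z,q) = \frac{1}{\aqprod{q}{q}{\infty}}\sum_{n=0}^\infty \frac{q^{n}\alpha_n}{(1-zq^{n})(1-z^{-1}q^{n})} .
\end{align*}

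It then remains to insert the explicit values of $\alpha_n$ and assemble the bilateral series. Since $\alpha_0 = 0$ and $\alpha_{3m}=0$, only $n=3m+1$ with $m\ge 0$ and $n=3m-1$ with $m\ge 1$ contribute. For $n=3m+1$ a direct computation gives $q^{n}\alpha_n = q^{6m^{2}+4m+1}(1-q^{6m+2})$, which is precisely the term indexed by $m$ of the claimed bilateral sum $\sum_{n=-\infty}^{\infty}q^{6n^{2}+4n+1}(1-q^{6n+2})/\bigl((1-zq^{3n+1})(1-z^{-1}q^{3n+1})\bigr)$. For $n=3m-1$ one gets $q^{n}\alpha_n = q^{6m^{2}+2m-1} - q^{6m^{2}-4m+1}$; using $1-zq^{1-3m} = -zq^{1-3m}(1-z^{-1}q^{3m-1})$ and its mirror, hence $(1-zq^{1-3m})(1-z^{-1}q^{1-3m}) = q^{2-6m}(1-zq^{3m-1})(1-z^{-1}q^{3m-1})$, one checks that this term equals the term indexed by $-m$ of that same bilateral sum. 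Combining the two one-sided sums into the single sum over all integers $n$ then yields the Corollary.

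The individual steps are routine; the one requiring care is the last reindexing, where the appearance of $z^{-1}$ forces negative powers of $q$ into the denominators, so one must keep track of the factor $q^{2-6m}$ correctly when folding the $n=3m-1$ terms onto the negative-index terms of the bilateral series. One should also note that the application of Bailey's lemma is legitimate even with $\rho_1\rho_2 = 1$, because all series in play converge as formal power series in $q$ (the $q$-order of $\aqprod{q}{q}{2n-1}^{-1}$ grows without bound in $n$).
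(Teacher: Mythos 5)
Your proposal is correct and follows essentially the same route as the paper: apply the limiting case of Bailey's lemma with $a=1$, $\rho_1=z$, $\rho_2=z^{-1}$ to the Bailey pair of Proposition \ref{PropositionBaileyPair}, telescope the finite products to reach $\frac{1}{\aqprod{q}{q}{\infty}}\sum_{n\ge 0}\frac{q^n\alpha_n}{(1-zq^n)(1-z^{-1}q^n)}$, and then fold the $n=3m-1$ terms onto the negative indices of the bilateral series. The reindexing details, including the factor $q^{2-6m}$, check out.
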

\begin{proof}
By Proposition \ref{PropositionBaileyPair},
we have
\begin{align*}
	\ST(z,q)
	&=
	\frac{\aqprod{q}{q}{\infty}}{\aqprod{z,z^{-1}}{q}{\infty}}
	\sum_{n=1}^\infty \frac{\aqprod{z,z^{-1}}{q}{n}q^n}{\aqprod{q}{q}{2n-1}}
	\\
	&=
	\frac{\aqprod{q}{q}{\infty}}{\aqprod{z,z^{-1}}{q}{\infty}}
	\sum_{n=0}^\infty \aqprod{z,z^{-1}}{q}{n}q^n\beta_n
	\\
	&=
	\frac{1}{(1-z)(1-z^{-1})\aqprod{q}{q}{\infty}}
	\sum_{n=0}^\infty \frac{\aqprod{z,z^{-1}}{q}{n}q^n\alpha_n}
		{\aqprod{zq,z^{-1}q}{q}{n}}
	\\
	&=
	\frac{1}{\aqprod{q}{q}{\infty}}
	\sum_{n=0}^\infty \frac{q^n\alpha_n}{(1-zq^n)(1-z^{-1}q^n)}
	\\
	&=
	\frac{1}{\aqprod{q}{q}{\infty}}	
	\sum_{n=0}^\infty \frac{q^{3n+1}\alpha_{3n+1}}
		{(1-zq^{3n+1})(1-z^{-1}q^{3n+1})}
	+
	\sum_{n=1}^\infty \frac{q^{3n-1}\alpha_{3n-1}}
		{(1-zq^{3n-1})(1-z^{-1}q^{3n-1})}
	\\
	&=
	\frac{1}{\aqprod{q}{q}{\infty}}	
	\sum_{n=0}^\infty \frac{q^{6n^2+4n+1}(1-q^{6n+2})}
		{(1-zq^{3n+1})(1-z^{-1}q^{3n+1})}
	+
	\sum_{n=-\infty}^{-1} \frac{q^{6n^2-2n-1}(1-q^{6n+2})}
		{(1-zq^{-3n-1})(1-z^{-1}q^{-3n-1})}
	\\
	&=
	\frac{1}{\aqprod{q}{q}{\infty}}	
	\sum_{n=-\infty}^\infty \frac{q^{6n^2+4n+1}(1-q^{6n+2})}
		{(1-zq^{3n+1})(1-z^{-1}q^{3n+1})}
.
\end{align*}
\end{proof}

Next we define
\begin{align*}
	U_\ell(b) 
	&= 
	\sum_{n=-\infty}^\infty \frac{q^{6n^2+bn}}{1-q^{\ell(3n+1)}}
.
\end{align*}

We use the additional product notation
\begin{align*}
	j(z;q) &= \jacprod{z}{q}\aqprod{q}{q}{\infty}
	= \sum_{n=-\infty}^\infty (-1)^n z^n q^{n(n-1)/2}.
\end{align*}

We now provide properties of $\jacprod{z}{q}$,  $j(z,q)$, 
and $\SSeries{z}{w}{q}$. The proofs are routine and thus omitted.
\begin{proposition}\label{PropositionMiscIdents}
\begin{align}
	\label{JacProdProperty1}
	\jacprod{z}{q} &= \jacprod{q/z}{q}
	,\\
	\label{JacProdProperty2}
	\jacprod{z}{q} &= -z\jacprod{qz}{q}
	,\\
	\label{JacProdProperty3}
	\jacprod{z}{q} &= -z\jacprod{z^{-1}}{q}
	,\\
	\label{SigmaProperty1}
	\SSeries{z}{w}{q} &= -z^{-1}\SSeries{z^{-1}}{w^{-1}q^{-3}}{q}
	,\\
	\label{SigmaProperty3}
	\SSeries{z}{w}{q}&= -z^{-1}w^{-1}q\SSeries{z^{-1}q}{w^{-1}q}{q}
	,\\
	\label{SigmaProperty4}
	\SSeries{z}{z^4/q^2}{q} + z\SSeries{z}{z^4/q}{q}
		&=
		z^{-1}\SSeries{z}{z^4/q^3}{q} + z^2\SSeries{z}{z^4}{q}
		-z^{-1}j(q/z^2;q)
	.
\end{align} 
\end{proposition}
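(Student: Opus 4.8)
The plan is to establish the six identities one by one by manipulating the defining sums directly; beyond reindexing bilateral series, one elementary factorization, and the Jacobi triple product, nothing is needed. I would treat $\jacprod{z}{q}$, $j(z;q)$, and $\SSeries{z}{w}{q}$ as analytic functions of $z$ on the relevant punctured domains (equivalently, as formal Laurent series in $q$), so that rearranging absolutely convergent sums is justified.

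Identity $(\ref{JacProdProperty1})$ is immediate: the product $\jacprod{z}{q}=\aqprod{z}{q}{\infty}\aqprod{q/z}{q}{\infty}$ is unchanged under $z\mapsto q/z$ because $q/(q/z)=z$. For $(\ref{JacProdProperty2})$ and $(\ref{JacProdProperty3})$ I would use the theta representation $j(z;q)=\sum_{n}(-1)^nz^nq^{n(n-1)/2}$. The reindexing $n\mapsto n+1$, together with $(n+1)n/2=n+n(n-1)/2$, gives $j(z;q)=-z\,j(qz;q)$; combining this with $(\ref{JacProdProperty1})$ (which after the substitution $z\mapsto z^{-1}$ reads $j(z^{-1};q)=j(qz;q)$) gives $j(z;q)=-z\,j(z^{-1};q)$. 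Dividing both relations by $\aqprod{q}{q}{\infty}$ yields $(\ref{JacProdProperty2})$ and $(\ref{JacProdProperty3})$.

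For $(\ref{SigmaProperty1})$ and $(\ref{SigmaProperty3})$ I would start from $\SSeries{z}{w}{q}=\sum_{n}q^{2n(n+1)}w^n/(1-zq^n)$ and the elementary identity $\frac{1}{1-zq^{-m}}=\frac{-z^{-1}q^{m}}{1-z^{-1}q^{m}}$. Substituting $n\mapsto -n$ replaces $q^{2n(n+1)}$ by $q^{2n(n-1)}$; applying the elementary identity with $m=n$ and noting $2n(n-1)+n=2n(n+1)-3n$ collects the surplus power of $q$ as $q^{-3n}$, producing $-z^{-1}\SSeries{z^{-1}}{w^{-1}q^{-3}}{q}$, which is $(\ref{SigmaProperty1})$. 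Substituting instead $n\mapsto -n-1$ (which leaves $q^{2n(n+1)}$ unchanged) and applying the elementary identity with $m=n+1$ gives $-z^{-1}w^{-1}q\,\SSeries{z^{-1}q}{w^{-1}q}{q}$, which is $(\ref{SigmaProperty3})$.

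The one identity with real content is $(\ref{SigmaProperty4})$. Set $f_k=\SSeries{z}{z^4q^{-k}}{q}=\sum_{n}q^{2n^2+(2-k)n}z^{4n}/(1-zq^n)$; after multiplying through by $z$ the claim becomes $z^3f_0+f_3-zf_2-z^2f_1=j(q/z^2;q)$. Collecting the four series over the common denominator $1-zq^{n}$, the numerator of the $n$-th term is $q^{2n^2}z^{4n}\bigl(q^{2n}z^3+q^{-n}-z-q^nz^2\bigr)$, and the crucial point is the factorization $q^{2n}z^3+q^{-n}-z-q^nz^2=(1-zq^{n})\bigl(q^{-n}-q^{n}z^2\bigr)$. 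Cancelling $1-zq^{n}$ collapses the left side to the theta-type sum $\sum_{n}\bigl(q^{2n^2-n}z^{4n}-q^{2n^2+n}z^{4n+2}\bigr)$. It then remains to recognize this as $j(q/z^2;q)=\sum_{m}(-1)^mz^{-2m}q^{m(m+1)/2}$, which I would do by splitting the latter sum according to the parity of $m$: the even terms $m=2k$ give $\sum_k q^{2k^2+k}z^{-4k}=\sum_k q^{2k^2-k}z^{4k}$ (via $k\mapsto -k$) and the odd terms $m=2k+1$ give $-\sum_k q^{2k^2+3k+1}z^{-4k-2}=-\sum_k q^{2k^2+k}z^{4k+2}$ (via $k\mapsto -k-1$), which together reproduce the collapsed left side. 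The main obstacle throughout is purely recognition — seeing that the quartic in $q^{n}$ in the numerator factors through $1-zq^{n}$, and then identifying the surviving theta-type series with $j(q/z^2;q)$ — once these are spotted, every step is mechanical.
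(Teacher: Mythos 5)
Your proof is correct in every detail: identities (\ref{JacProdProperty1})--(\ref{JacProdProperty3}) follow from the symmetry of the product and the shift of the theta series exactly as you describe; the reindexings $n\mapsto -n$ and $n\mapsto -n-1$ combined with $\frac{1}{1-zq^{-m}}=\frac{-z^{-1}q^m}{1-z^{-1}q^m}$ do give (\ref{SigmaProperty1}) and (\ref{SigmaProperty3}); and for (\ref{SigmaProperty4}) the factorization $q^{2n}z^3+q^{-n}-z-q^nz^2=(1-zq^n)(q^{-n}-q^nz^2)$ and the even/odd split of $j(q/z^2;q)$ both check out. The paper simply declares these proofs ``routine'' and omits them, so there is nothing to compare against; your argument supplies precisely the standard verification the author had in mind.
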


\section{Proof of Theorem \ref{theorem2}}

\begin{lemma}\label{ChanLemma1}
\begin{align*}
	&\frac{\aqprod{q}{q}{\infty}^2}{\jacprod{b_1,1/b_1,b_3,b_4}{q}}
	\\
	&=
		\frac{1}{\jacprod{1/b_1^2,b_3/b_1,b_4/b_1}{q}}
			\SSeries{b_1}{\frac{b_1^4}{b_3b_4}}{q}
		-
		\frac{b_1}{\jacprod{b_1^2,b_3b_1,b_4b_1}{q}}
			\SSeries{b_1}{\frac{b_1^4b_3b_4}{q^3}}{q}
		\\&\quad
		+
		\frac{1}{\jacprod{b_1/b_3,1/b_1b_3,b_4/b_3}{q}}
			\SSeries{b_3}{\frac{b_3^3}{b_4}}{q}
		+
		\frac{1}{\jacprod{b_1/b_4,1/b_1b_4,b_3/b_4}{q}}
			\SSeries{b_4}{\frac{b_4^3}{b_3}}{q}
.
\end{align*}
\end{lemma}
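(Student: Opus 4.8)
The plan is to recognize this as an instance of Chan's generalized Lambert series identity (the "$A_1$" or partial-fraction expansion result referenced in the abstract) and to derive it by a partial-fraction decomposition argument in a single auxiliary variable. Concretely, I would introduce the function
\begin{align*}
	F(z) &= \frac{\aqprod{q}{q}{\infty}^2\,\jacprod{z}{q}^{-1}\jacprod{b_1 b_3 b_4 / z}{q}^{-1}}{\jacprod{b_1, 1/b_1, b_3, b_4}{q}}\cdot(\text{suitable normalizing factor})
\end{align*}
or, more in line with the structure of the right-hand side, work directly with $G(z)=\aqprod{q}{q}{\infty}^2 / \jacprod{b_1/z,\,1/(b_1 z),\, b_3/z,\, b_4/z}{q}$ viewed as a function of $z$. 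The key observation is that $G$ (after multiplying by an appropriate theta quotient to make it elliptic) is a meromorphic function on $\mathbb{C}^*/q^{\mathbb{Z}}$ whose only poles are simple, located at $z \equiv b_1, b_1^{-1}, b_3, b_4 \pmod{q^{\mathbb{Z}}}$. A function of this type is determined by its principal parts, and summing the principal parts over the lattice produces exactly the four $\Sigma$-series on the right-hand side: each term $\SSeries{b_i}{\cdot}{q}$ is the Lambert-type series one gets by expanding $\frac{1}{1-b_i q^n}$ against the relevant Gaussian $q^{2n(n+1)}$, with the prefactor $\jacprod{\cdots}{q}^{-1}$ recording the residue of $G$ at the corresponding pole.

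The steps, in order, would be: (1) compute the residue of the left-hand side at $z=b_1$ (and symmetrically at $z=b_1^{-1}, b_3, b_4$) using $\jacprod{z}{q}$'s simple zero at $z=1$ together with properties \eqref{JacProdProperty1}--\eqref{JacProdProperty3} to simplify the resulting theta quotients; (2) check that the right-hand side, as a function of the implicit variable, has matching residues at these four points — this uses the definition $\SSeries{z}{w}{q}=\sum_n q^{2n(n+1)}w^n/(1-zq^n)$, whose $n=0$ term contributes the pole; (3) verify that both sides transform the same way under $z \mapsto qz$, i.e., they differ by the same automorphy factor, which is where \eqref{SigmaProperty1} and \eqref{SigmaProperty3} enter to handle the quasi-periodicity of the $\Sigma$-series; (4) conclude by Liouville's theorem that the difference of the two sides, being holomorphic and elliptic (hence bounded), is constant, and finally (5) pin down the constant to be zero, e.g. by specializing $z$ or by a residue/coefficient comparison at a convenient point.

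I expect the main obstacle to be step (3)–(4): matching the automorphy factors and verifying the ellipticity bookkeeping for the four distinct $\Sigma$-series simultaneously, since each $\SSeries{b_i}{b_i^3 b_j^{-1}}{q}$ has its own "weight" $w=b_i^4/(b_3 b_4)$ or $b_i^3/b_j$ chosen precisely so that the corresponding term has the right quasi-period, and one must confirm these four choices are mutually consistent with a single elliptic transformation law for the left-hand side. Identity \eqref{SigmaProperty4} is likely the linear relation among the $\Sigma$-series that makes the constant vanish, so I would keep it in reserve for step (5). An alternative, if the direct elliptic-function argument proves delicate, is to cite Chan's identity in the form already proved in the literature and simply perform the substitution $b_2 \mapsto 1/b_1$ (the degenerate case where two of the four parameters are reciprocal), tracking how the general four-parameter formula collapses — but the residue-and-Liouville route above is self-contained and is what I would write up.
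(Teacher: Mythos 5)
Your fallback option is exactly what the paper does: it quotes Chan's Theorem 2.1 in the $s=4$, $r=0$ case, specializes $b_2=b_1^{-1}$, and then uses \eqref{SigmaProperty1} to rewrite the resulting term $\SSeries{b_1^{-1}}{(b_1^4b_3b_4)^{-1}}{q}$ as $-b_1\SSeries{b_1}{b_1^4b_3b_4q^{-3}}{q}$, which is where the lone minus sign and the factor $b_1$ in the second term come from. Your primary route --- re-deriving the four-term expansion by a residue-and-Liouville argument --- is genuinely different: it would amount to reproving Chan's identity in this special case rather than citing it, which buys self-containedness at the cost of considerable bookkeeping. If you pursue it, two points need care. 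First, you must commit to a specific elliptic variable (your $G(z)$ with an auxiliary $z$, or else $b_1$ itself); in the latter case the left-hand side has \emph{double} poles at $b_1\in q^{\mathbb{Z}}$ because of the factor $\jacprod{b_1,1/b_1}{q}$ in the denominator, and the right-hand side only reproduces these through the interaction of the prefactors $\jacprod{b_1^{\pm 2},\dots}{q}^{-1}$ (which also have poles at $b_1^2\in q^{\mathbb{Z}}$ with $b_1\notin q^{\mathbb{Z}}$ that must cancel between the first two terms) with the simple poles of $\SSeries{b_1}{\cdot}{q}$; so the principal-part matching is not the routine simple-pole computation your sketch suggests. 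Second, \eqref{SigmaProperty4} is not the relation that kills the constant here --- in the paper it is used only later, in Proposition \ref{Prop2Zeta3}; the identity you actually need to reconcile the two $b_1$-terms is \eqref{SigmaProperty1} (with \eqref{SigmaProperty3} governing the quasi-periodicity in your step (3)). With those repairs the elliptic-function argument should go through, but the citation-plus-substitution route is the one the paper takes and is essentially immediate.
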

\begin{proof}
We will use the $s=4$, $r=0$ case of Theorem 2.1 in \cite{Chan}:
\begin{align*}
	&\frac{\aqprod{q}{q}{\infty}^2}{\jacprod{b_1,b_2,b_3,b_4}{q}}
	\\
	&=
		\frac{1}{\jacprod{b_2/b_1,b_3/b_1,b_4/b_1}{q}}
			\SSeries{b_1}{\frac{b_1^3}{b_2b_3b_4}}{q}
		+
		\frac{1}{\jacprod{b_1/b_2,b_3/b_2,b_4/b_2}{q}}
			\SSeries{b_2}{\frac{b_2^3}{b_1b_3b_4}}{q}
		\\&\quad
		+
		\frac{1}{\jacprod{b_1/b_3,b_2/b_3,b_4/b_3}{q}}
			\SSeries{b_3}{\frac{b_3^3}{b_1b_2b_4}}{q}
		+
		\frac{1}{\jacprod{b_1/b_4,b_2/b_4,b_3/b_4}{q}}
			\SSeries{b_4}{\frac{b_4^3}{b_1b_2b_3}}{q}
.
\end{align*}
Setting $b_2 = b_1^{-1}$ we get
\begin{align*}
	&\frac{\aqprod{q}{q}{\infty}^2}{\jacprod{b_1,1/b_1,b_3,b_4}{q}}
	\\
	&=
		\frac{1}{\jacprod{1/b_1^2,b_3/b_1,b_4/b_1}{q}}
			\SSeries{b_1}{\frac{b_1^4}{b_3b_4}}{q}
		+
		\frac{1}{\jacprod{b_1^2,b_3b_1,b_4b_1}{q}}
			\SSeries{b_1^{-1}}{\frac{1}{b_1^4b_3b_4}}{q}
		\\&\quad
		+
		\frac{1}{\jacprod{b_1/b_3,1/b_1b_3,b_4/b_3}{q}}
			\SSeries{b_3}{\frac{b_3^3}{b_4}}{q}
		+
		\frac{1}{\jacprod{b_1/b_4,1/b_1b_4,b_3/b_4}{q}}
			\SSeries{b_4}{\frac{b_4^3}{b_3}}{q}
.
\end{align*}
By (\ref{SigmaProperty1}) 
\begin{align*}
	\SSeries{b_1^{-1}}{\frac{1}{b_1^4b_3b_4}}{q}
	&=
	-b_1\SSeries{b_1}{\frac{b_1^4b_3b_4}{q^3}}{q}
,
\end{align*}
and so the result follows.
\end{proof}

Additionally setting $b_4 = b_3^{-1}$ gives the following Lemma.
\begin{lemma}\label{ChanLemma2}
\begin{align*}
	&\frac{\aqprod{q}{q}{\infty}^2}{\jacprod{b_1,1/b_1,b_3,1/b_3}{q}}
	\\
	&=
		\frac{1}{\jacprod{1/b_1^2,b_3/b_1,1/b_1b_3}{q}}
			\SSeries{b_1}{b_1^4}{q}
		-
		\frac{b_1}{\jacprod{b_1^2,b_3b_1,b_1/b_3}{q}}
			\SSeries{b_1}{\frac{b_1^4}{q^3}}{q}
		\\&\quad
		+
		\frac{1}{\jacprod{b_1/b_3,1/b_1b_3,1/b_3^2}{q}}
			\SSeries{b_3}{b_3^4}{q}
		-
		\frac{b_3}{\jacprod{b_1b_3,b_3/b_1,b_3^2}{q}}
			\SSeries{b_3}{\frac{b_3^4}{q^3}}{q}
.
\end{align*}
\end{lemma}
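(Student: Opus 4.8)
The plan is to obtain Lemma \ref{ChanLemma2} directly from Lemma \ref{ChanLemma1} by setting $b_4 = 1/b_3$ and simplifying. Under this specialization the left-hand side $\frac{\aqprod{q}{q}{\infty}^2}{\jacprod{b_1,1/b_1,b_3,b_4}{q}}$ becomes $\frac{\aqprod{q}{q}{\infty}^2}{\jacprod{b_1,1/b_1,b_3,1/b_3}{q}}$ at once, so the work is entirely on the right-hand side. I would handle the four summands in turn. In the first, $b_3 b_4 = 1$ reduces $\SSeries{b_1}{b_1^4/(b_3b_4)}{q}$ to $\SSeries{b_1}{b_1^4}{q}$ and turns $b_4/b_1$ into $1/(b_1 b_3)$; in the second, $\SSeries{b_1}{b_1^4 b_3 b_4/q^3}{q}$ reduces to $\SSeries{b_1}{b_1^4/q^3}{q}$ and $b_4 b_1$ becomes $b_1/b_3$; in the third, $b_3^3/b_4 = b_3^4$ and $b_4/b_3 = 1/b_3^2$. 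These three pieces already agree term by term with the first three summands of Lemma \ref{ChanLemma2}.

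The only summand that is not immediately in the stated form is the fourth, which after the substitution reads
\[
	\frac{1}{\jacprod{b_1b_3,b_3/b_1,b_3^2}{q}}\SSeries{b_3^{-1}}{b_3^{-4}}{q},
\]
a $\Sigma$-series evaluated at the reciprocal base $b_3^{-1}$. To bring it into line I would apply the reflection identity (\ref{SigmaProperty1}), $\SSeries{z}{w}{q} = -z^{-1}\SSeries{z^{-1}}{w^{-1}q^{-3}}{q}$, with $z = b_3^{-1}$ and $w = b_3^{-4}$, giving $\SSeries{b_3^{-1}}{b_3^{-4}}{q} = -b_3\SSeries{b_3}{b_3^4/q^3}{q}$. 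The fourth summand thus becomes $-\frac{b_3}{\jacprod{b_1b_3,b_3/b_1,b_3^2}{q}}\SSeries{b_3}{b_3^4/q^3}{q}$, exactly the last term claimed, and collecting the four reorganized pieces finishes the proof.

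I expect no real obstacle here; the argument is essentially bookkeeping, and the single point worth care is that the fourth summand must be reflected via (\ref{SigmaProperty1}) so that it matches the $x^4/q^3$ shape of the second summand. A convenient consistency check is that the resulting identity is manifestly invariant under $b_1 \leftrightarrow b_3$ (the first and third summands, and the second and fourth summands, being interchanged up to reordering the Jacobi triple products), reflecting the symmetry between the pairs $\{b_1,1/b_1\}$ and $\{b_3,1/b_3\}$ after both specializations. If one wishes to present the Jacobi products in a fixed normalization, (\ref{JacProdProperty1}) and (\ref{JacProdProperty3}) may be used freely, but no further identity is needed.
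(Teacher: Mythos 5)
Your proposal is correct and is exactly the paper's route: the paper obtains Lemma \ref{ChanLemma2} by the one-line remark that setting $b_4=b_3^{-1}$ in Lemma \ref{ChanLemma1} gives the result, with the reflection (\ref{SigmaProperty1}) applied to the fourth summand just as you describe. Your write-up simply makes that bookkeeping explicit, and your checks (including the $b_1\leftrightarrow b_3$ symmetry) are sound.
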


We now require the following Propositions.

\begin{proposition}\label{Prop1Zeta3}
\begin{align}
	\label{3DissectionEq1}
	&\Sigma(1,-11,9)
	+q^{15}\Sigma(7,16,9)
	\nonumber\\
	&=
	\frac{\aqprod{q^9}{q^{9}}{\infty}^2\jacprod{q^3}{q^9}}
	{\jacprod{q}{q^{9}}\jacprod{q^4}{q^{9}}}
	-
	\frac{\jacprod{q}{q^9}}{\jacprod{q^4}{q^9}}
	\Parans{q\Sigma(1,-8,9) + q^{5}\Sigma(4,1,9)}
	,\\
	\label{3DissectionEq2}
	&q^{6}\Sigma(4,4,9)
	+q^{13}\Sigma(7,13,9)
	\nonumber\\
	&=
	q\frac{\aqprod{q^9}{q^{9}}{\infty}^2\jacprod{q^3}{q^9}}
	{\jacprod{q^4}{q^{9}}^2}
	-
	\frac{\jacprod{q^2}{q^9}}{\jacprod{q^4}{q^9}}
	\Parans{q\Sigma(1,-8,9)+q^5\Sigma(4,1,9)}
	.
\end{align}
\end{proposition}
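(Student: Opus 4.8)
The plan is to derive both \eqref{3DissectionEq1} and \eqref{3DissectionEq2} from the specializations of Chan's Lemma recorded above, followed by use of the symmetry and shift identities for $\SSeries{z}{w}{q}$ from Proposition \ref{PropositionMiscIdents}. First I would apply Lemma \ref{ChanLemma1} with the base $q$ replaced by $q^9$ and with a judicious choice of the parameters $b_1,b_3,b_4$ as powers of $q$ so that the product $\jacprod{b_1,1/b_1,b_3,b_4}{q^9}$ on the left collapses to the product $\jacprod{q}{q^9}^2\jacprod{q^3}{q^9}\cdots$ appearing on the right-hand sides. Concretely, comparing the stated right-hand sides, the natural guess is $b_1 = q$, $b_3 = q^3$, $b_4 = q^4$ (up to using \eqref{JacProdProperty1}, i.e. $\jacprod{z}{q}=\jacprod{q/z}{q}$, to match $1/b_1$ with the correct power); with $q\mapsto q^9$ this turns the four $\SSeries{\cdot}{\cdot}{q^9}$ terms of Lemma \ref{ChanLemma1} into $\SSeries{q^a}{q^b}{q^9}=\Sigma(a,b,9)$ for explicit $a,b$, which one then reads off as precisely the combination $\Sigma(1,-11,9)$, $\Sigma(7,16,9)$, $\Sigma(1,-8,9)$, $\Sigma(4,1,9)$ modulo the shift/reflection moves.

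The main technical work — and the step I expect to be the chief obstacle — is matching the $w$-arguments. Lemma \ref{ChanLemma1} produces series like $\SSeries{b_1}{b_1^4/(b_3b_4)}{q}$ and $\SSeries{b_1}{b_1^4b_3b_4/q^3}{q}$, whose exponents $b$ after the substitution $q\mapsto q^9$ will be $-8$ and $-11$ or so rather than exactly the exponents I want; I will need to normalize them using \eqref{SigmaProperty1} and especially the shift relation \eqref{SigmaProperty3}, $\SSeries{z}{w}{q}= -z^{-1}w^{-1}q\,\SSeries{z^{-1}q}{w^{-1}q}{q}$, applied with $q\mapsto q^9$ (so the ``$q$'' there is $q^9$), possibly iterated, together with the $\jacprod{\cdot}{\cdot}$ reflection formulas \eqref{JacProdProperty1}–\eqref{JacProdProperty3} to absorb the resulting monomial prefactors $q^{15}$, $q$, $q^5$, $q^6$, $q^{13}$ and to flip any $\Sigma(a,b,9)$ with $a$ outside a canonical range (e.g. $\Sigma(7,\cdot,9)$ reflecting to a $\Sigma(\cdot,\cdot,9)$ with a smaller first argument, at the cost of a power of $q$). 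Bookkeeping the quadratic exponents carefully is where errors creep in, so I would tabulate, for each of the four terms, the triple $(z\text{-exp}, w\text{-exp})$ before and after each application of \eqref{SigmaProperty1} and \eqref{SigmaProperty3}, and separately track the accumulated scalar/$q$-power factor.

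For \eqref{3DissectionEq2} I would repeat the same procedure with a second choice of parameters in Lemma \ref{ChanLemma1} — the right-hand side has $\jacprod{q^4}{q^9}^2$ in the denominator of the leading term, which signals taking $b_3=q^4$ (or $b_1=q^4$) so that two of the four Jacobi-product factors coincide, while $b_1$ or $b_4$ is chosen to reproduce $\jacprod{q^2}{q^9}$ in the second term. The $\SSeries{\cdot}{\cdot}{q^9}$ terms then simplify to $\Sigma(4,4,9)$, $\Sigma(7,13,9)$ and again $\Sigma(1,-8,9)$, $\Sigma(4,1,9)$ after the same normalization steps. A useful consistency check at the end: adding a suitable multiple of \eqref{3DissectionEq1} to \eqref{3DissectionEq2} should eliminate the common pair $q\Sigma(1,-8,9)+q^5\Sigma(4,1,9)$ and leave an identity among theta quotients that can be verified by comparing $q$-expansions to low order, which I would use to catch any sign or exponent slip. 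Throughout, all manipulations are formal identities of Laurent series in $q$, so no convergence issues arise; the proof is therefore a (lengthy but routine) bookkeeping argument once the parameter choices are fixed, which is why the authors are likely to present it with the intermediate simplifications only sketched.
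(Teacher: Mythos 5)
Your overall strategy is the paper's: both identities are obtained from Lemma \ref{ChanLemma1} with $q\mapsto q^9$ and the $b_i$ powers of $q$, followed by the product reflections \eqref{JacProdProperty1}--\eqref{JacProdProperty3} and the shift \eqref{SigmaProperty3} to normalize the $\Sigma$-arguments (the paper uses $\Sigma(4,1,9)=-q^{4}\Sigma(5,8,9)$ for \eqref{3DissectionEq1} and $\Sigma(1,-8,9)=-q^{16}\Sigma(8,17,9)$ for \eqref{3DissectionEq2}). However, your concrete guess $b_1=q$, $b_3=q^3$, $b_4=q^4$ for \eqref{3DissectionEq1} cannot be repaired by those normalizations, for two independent reasons: (i) it puts a factor $\jacprod{q^3}{q^9}$ into the left-hand denominator, and $\jacprod{q^a}{q^9}$ is determined up to a unit by the class $\pm a \bmod 9$, so $\jacprod{q^3}{q^9}$ can never be traded for the $\jacprod{q^2}{q^9}$ that is actually needed; (ii) Chan's lemma would then produce terms $\SSeries{q^3}{\cdot}{q^9}$, and the transformations of Proposition \ref{PropositionMiscIdents} only move the first argument of $\Sigma(a,b,9)$ within its class $\pm a\bmod 9$, so a $\Sigma(3,\cdot,9)$ can never become any of the $\Sigma(1,\cdot,9)$, $\Sigma(4,\cdot,9)$, $\Sigma(7,\cdot,9)$ appearing in the identity.

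The slip comes from reading the parameters off the right-hand side as stated. The paper first moves all $\Sigma$-terms to one side and divides by $\jacprod{q,q^2,q^3}{q^9}$; this cancels the $\jacprod{q^3}{q^9}$ in the numerator, leaves $\aqprod{q^9}{q^9}{\infty}^2/\jacprod{q,q,q^2,q^4}{q^9}$ on the left, and simultaneously gives every $\Sigma$ a coefficient of the shape $1/\jacprod{\cdot,\cdot,\cdot}{q^9}$ as in Lemma \ref{ChanLemma1}. The parameters are then forced: $b_1=q$, $b_3=q^7$, $b_4=q^5$, so that $\SSeries{b_3}{b_3^3/b_4}{q^9}=\Sigma(7,16,9)$, etc., appear on the nose. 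For \eqref{3DissectionEq2} your instinct that $b_1=q^4$ produces the repeated factor is correct; after dividing by $q\jacprod{q,q^2,q^3}{q^9}$ the paper takes $b_1=q^4$, $b_3=q^8$, $b_4=q^7$. With these choices fixed, the rest of your plan (tabulating the $\Sigma$-transformations and the accumulated $q$-powers, plus a low-order series check) goes through as you describe.
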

\begin{proof}
For (\ref{3DissectionEq1}) we move all $\SSeries{z}{w}{q}$ terms to one side,
divide by $\jacprod{q,q^2,q^3}{q^9}$, and use that
$\SSeries{4}{1}{9} = -q^{4}\SSeries{5}{8}{9}$
by (\ref{SigmaProperty3}). Equation (\ref{3DissectionEq1}) is then equivalent
to
\begin{align}\label{3DissectionEq1a}
	\frac{\aqprod{q^9}{q^9}{\infty}^2}{\jacprod{q,q,q^2,q^4}{q^9}}
	&=
	\frac{1}{\jacprod{q,q^2,q^3}{q^9}}\Sigma(1,-11,9) 
	+\frac{q^{15}}{\jacprod{q,q^2,q^3}{q^9}}\Sigma(7,16,9)
	\nonumber\\&\quad
	+\frac{q}{\jacprod{q^2,q^3,q^4}{q^9}}\Sigma(1,-8,9) 
	-\frac{q^9}{\jacprod{q^2,q^3,q^4}{q^9}}\Sigma(5,8,9)
.
\end{align}
In Lemma \ref{ChanLemma1} we use $q\mapsto q^9$, $b_1=q, b_3 = q^7, b_4=q^5$ to get
\begin{align*}
	\frac{\aqprod{q^9}{q^9}{\infty}^2}{\jacprod{q,q^{-1},q^7,q^5}{q^9}}
	=&
		\frac{1}{\jacprod{q^{-2},q^{6},q^4}{q^9}}\SSeries{1}{-8}{9}
		-
		\frac{q}{\jacprod{q^{2},q^{8},q^{6}}{q^9}}\SSeries{1}{-11}{9}
		\\&
		+
		\frac{1}{\jacprod{q^{-6},q^{-8},q^{-2}}{q^9}}\SSeries{7}{16}{9}
		+
		\frac{1}{\jacprod{q^{-4},q^{-6},q^{2}}{q^9}}\SSeries{5}{8}{9}
.
\end{align*}
Simplifying the products with Proposition \ref{PropositionMiscIdents} yields
(\ref{3DissectionEq1a}).

For (\ref{3DissectionEq2}) we move all $\SSeries{z}{w}{q}$ terms to one side,
divide by $q\jacprod{q,q^2,q^3}{q^9}$, and use that
$\SSeries{1}{-8}{9} = -q^{16}\SSeries{8}{17}{9}$
by (\ref{SigmaProperty3}). Equation (\ref{3DissectionEq2}) is then equivalent
to
\begin{align}\label{3DissectionEq2a}
	\frac{\aqprod{q^9}{q^9}{\infty}^2}{\jacprod{q,q^2,q^4,q^4}{q^9}}
	&=
	-\frac{q^{16}}{\jacprod{q,q^3,q^4}{q^9}}\Sigma(8,17,9) 
	+\frac{q^{4}}{\jacprod{q,q^3,q^4}{q^9}}\Sigma(4,1,9)
	\nonumber\\&\quad
	+\frac{q^5}{\jacprod{q,q^2,q^3}{q^9}}\Sigma(4,4,9) 
	+\frac{q^{12}}{\jacprod{q,q^2,q^3}{q^9}}\Sigma(7,13,9)
.
\end{align}
In Lemma \ref{ChanLemma1} we use $q\mapsto q^9$, $b_1=q^4, b_3 = q^8, b_4=q^7$ to get
\begin{align*}
	\frac{\aqprod{q^9}{q^9}{\infty}^2}{\jacprod{q^4,q^{-4},q^7,q^8}{q^9}}
	=&
		\frac{1}{\jacprod{q^{-8},q^{4},q^3}{q^9}}\SSeries{4}{1}{9}
		-
		\frac{q^4}{\jacprod{q^{8},q^{12},q^{11}}{q^9}}\SSeries{4}{4}{9}
		\\&
		+
		\frac{1}{\jacprod{q^{-4},q^{-12},q^{-1}}{q^9}}\SSeries{8}{17}{9}
		+
		\frac{1}{\jacprod{q^{-3},q^{-11},q}{q^9}}\SSeries{7}{13}{9}
.
\end{align*}
Simplifying the products with Proposition \ref{PropositionMiscIdents} yields 
(\ref{3DissectionEq2a}).
\end{proof}

\begin{proposition}\label{Prop2Zeta3}
\begin{align}
	&q^{11}\Sigma(7,10,9)	+q^{18}\Sigma(7,19,9)
	\nonumber\\\label{3DissectionEq3}
	&=	
	\frac{\aqprod{q^{9}}{q^{9}}{\infty}^2\jacprod{q^3}{q^9}\jacprod{q^4}{q^9}}
	{\jacprod{q}{q^{9}}\jacprod{q^2}{q^{9}}^2}
	-
	\frac{\jacprod{q^4}{q^9}}{\jacprod{q^2}{q^9}}
	\Parans{\Sigma(1,-14,9)+q\Sigma(1,-5,9)}
	,\\
	&q^3\Sigma(4,-2,9)+q^{7}\Sigma(4,7,9)
	\nonumber\\\label{3DissectionEq4}
	&=
	-\frac{\aqprod{q^{9}}{q^{9}}{\infty}^2\jacprod{q^3}{q^9}}
	{\jacprod{q}{q^{9}}\jacprod{q^4}{q^{9}}}
	+
	\frac{\jacprod{q}{q^9}}{\jacprod{q^2}{q^9}}
	\Parans{\Sigma(1,-14,9) + q\Sigma(1,-5,9)}
.
\end{align}
\end{proposition}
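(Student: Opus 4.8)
The plan is to derive both (\ref{3DissectionEq3}) and (\ref{3DissectionEq4}) from Lemma \ref{ChanLemma1}, in exactly the way (\ref{3DissectionEq1}) and (\ref{3DissectionEq2}) were derived in the proof of Proposition \ref{Prop1Zeta3}. For each identity I would move all four $\Sigma$-series to one side, divide by a suitable product of theta functions so that the other side takes the shape $\aqprod{q^9}{q^9}{\infty}^2/\jacprod{b_1,1/b_1,b_3,b_4}{q^9}$, and then recognize what remains as Lemma \ref{ChanLemma1} with $q\mapsto q^9$ for an appropriate triple $b_1,b_3,b_4$. The triple is forced by the bases of the $\Sigma$-series: Lemma \ref{ChanLemma1} outputs two $\Sigma$-series with first argument $b_1$ and one each with first arguments $b_3$ and $b_4$, and in (\ref{3DissectionEq3}) the two left-hand $\Sigma$-series both have base $q^7$ while the two on the right both have base $q$, so $b_1=q^7$. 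In order that $\SSeries{b_1}{b_1^4/(b_3b_4)}{q^9}$ and $-b_1\SSeries{b_1}{b_1^4b_3b_4/q^{27}}{q^9}$ become $\Sigma(7,10,9)$ and $\Sigma(7,19,9)$, one needs $b_3b_4=q^{18}$; a convenient choice is $b_3=q^8$, $b_4=q^{10}$. For (\ref{3DissectionEq4}) the left-hand $\Sigma$-series have base $q^4$, so there I would take $b_1=q^4$ and keep $b_3=q^8$, $b_4=q^{10}$, which matches the fact that (\ref{3DissectionEq3}) and (\ref{3DissectionEq4}) carry the same $\Sigma(1,-14,9)$ and $\Sigma(1,-5,9)$ on the right.

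With these specializations the two outputs of Lemma \ref{ChanLemma1} with first argument $b_1$ are already the left-hand $\Sigma$-series; the $b_3$-output $\SSeries{b_3}{b_3^3/b_4}{q^9}$ becomes $\Sigma(8,14,9)$, which by (\ref{SigmaProperty3}) equals $-q^{-13}\Sigma(1,-5,9)$; and the $b_4$-output $\SSeries{b_4}{b_4^3/b_3}{q^9}$ becomes $\Sigma(q^{10},q^{22},q^9)$, which equals $q^{-22}\Sigma(1,-14,9)$ after shifting the summation index by one. Multiplying through by $q^{-8}\jacprod{q,q^3,q^4}{q^9}$ (respectively $q^{-5}\jacprod{q,q^3,q^4}{q^9}$) and simplifying every Jacobi product with the reflection and translation rules (\ref{JacProdProperty1})--(\ref{JacProdProperty3}) of Proposition \ref{PropositionMiscIdents} then collapses the $\aqprod{q^9}{q^9}{\infty}^2$-term to the product appearing in (\ref{3DissectionEq3}) (respectively (\ref{3DissectionEq4})) and the four $\Sigma$-coefficients to $q^{11}$, $q^{18}$, $\frac{\jacprod{q^4}{q^9}}{\jacprod{q^2}{q^9}}$, $q\frac{\jacprod{q^4}{q^9}}{\jacprod{q^2}{q^9}}$ (respectively to the analogous powers of $q$, up to sign, and the analogous expressions with $\jacprod{q}{q^9}$ in place of $\jacprod{q^4}{q^9}$). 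Returning the base-$q$ terms to the right then gives the two stated identities.

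The argument is entirely mechanical, a repeat of the proof of Proposition \ref{Prop1Zeta3}. The only spots where care is needed are the bookkeeping of powers of $q$ and of the signs introduced by (\ref{SigmaProperty3}) and by the index shift, and checking that for $b_1\in\{q^7,q^4\}$, $b_3=q^8$, $b_4=q^{10}$ none of the Jacobi products appearing in the denominators of Lemma \ref{ChanLemma1} vanishes. I expect the main, though still routine, obstacle to be matching the four $\Sigma$-coefficients and their signs against the stated right-hand sides.
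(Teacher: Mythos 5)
Your proposal is correct --- I checked the specializations $b_1=q^7$ (resp.\ $q^4$), $b_3=q^8$, $b_4=q^{10}$ in Lemma \ref{ChanLemma1} with $q\mapsto q^9$, and after simplifying the Jacobi products one does recover (\ref{3DissectionEq3}) and (\ref{3DissectionEq4}) exactly as you describe (the conversions $\Sigma(8,14,9)=-q^{-13}\Sigma(1,-5,9)$ and $\Sigma(q^{10},q^{22},q^9)=q^{-22}\Sigma(1,-14,9)$ both check out, the latter being the composition of (\ref{SigmaProperty1}) and (\ref{SigmaProperty3})). However, your route is genuinely different from the paper's. The paper does \emph{not} use Lemma \ref{ChanLemma1} here: it first applies the three-term transformation (\ref{SigmaProperty4}) to each of the two pairs $q^{11}\Sigma(7,10,9)+q^{18}\Sigma(7,19,9)$ and $\Sigma(1,-14,9)+q\Sigma(1,-5,9)$, converting them (after cancellation of the $\J{\cdot}{q^9}$ correction terms) into $\Sigma(7,1,9),\Sigma(7,28,9)$ and $\Sigma(1,-23,9),\Sigma(1,4,9)$, and then recognizes the result as Lemma \ref{ChanLemma2} --- the fully symmetric specialization $b_2=b_1^{-1}$, $b_4=b_3^{-1}$ --- with $b_1=q^7,b_3=q$ for (\ref{3DissectionEq3}) and $b_1=q,b_3=q^4$ for (\ref{3DissectionEq4}). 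Your choice $b_3b_4=q^{18}=(q^9)^2$ is the ``shifted'' version of $b_4=b_3^{-1}$, and its payoff is that the $\Sigma$-series emerging from the $b_3$- and $b_4$-slots are already elementary transforms of the desired $\Sigma(1,-14,9)$ and $\Sigma(1,-5,9)$, so the nontrivial identity (\ref{SigmaProperty4}) and the attendant $j$-function bookkeeping are never needed; the cost is slightly messier power-of-$q$ normalizations. Both arguments are applications of the same theorem of Chan, and yours is, if anything, the more streamlined of the two.
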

\begin{proof}
For (\ref{3DissectionEq3}), we move all $\SSeries{z}{w}{q}$ terms to one side
and multiply by $\jacprod{q^2}{q^9}$, so that
(\ref{3DissectionEq3}) is equivalent to
\begin{align}
	\label{3DissectionEq3a}
	&\frac{\aqprod{q^9}{q^9}{\infty}^2\jacprod{q^3,q^4}{q^9}}{\jacprod{q,q^2}{q^9}}		
	\nonumber\\
	&=
	\jacprod{q^2}{q^9}\Parans{q^{11}\SSeries{7}{10}{9} + q^{18}\SSeries{7}{19}{9}}
	+\jacprod{q^4}{q^9}\Parans{\SSeries{1}{-14}{9} + q\SSeries{1}{-5}{9}}
.
\end{align}
However by (\ref{SigmaProperty4}) and the definition of $j(z;q)$, we
find that the right hand side of (\ref{3DissectionEq3a}) is
\begin{align*}
	&
		\jacprod{q^2}{q^9}\Parans{q^{4}\SSeries{7}{1}{9} 
			+ q^{25}\SSeries{7}{28}{9} - q^{4}\J{q^{-5}}{q^9}}
		\\&
		+\jacprod{q^4}{q^9}\Parans{q^{-1}\SSeries{1}{-23}{9} 
			+ q^{2}\SSeries{1}{4}{9} - q^{-1}\J{q^7}{q^9}}
	\\
	&=
	\jacprod{q^2}{q^9}\Parans{q^{4}\SSeries{7}{1}{9} 
		+ q^{25}\SSeries{7}{28}{9}}
	+\jacprod{q^4}{q^9}\Parans{q^{-1}\SSeries{1}{-23}{9} 
		+ q^{2}\SSeries{1}{4}{9}}
.
\end{align*}
With the above and dividing (\ref{3DissectionEq3a}) by
$\jacprod{q,q^2,q^3,q^4}{q^9}$,
we have that (\ref{3DissectionEq3}) is equivalent to
\begin{align}
	\label{3DissectionEq3b}
	\frac{\aqprod{q^9}{q^9}{\infty}^2}{\jacprod{q,q,q^2,q^2}{q^9}}
	&=
	\frac{q^{4}}{\jacprod{q,q^3,q^4}{q^9}}\SSeries{7}{1}{9} 
	+\frac{q^{25}}{\jacprod{q,q^3,q^4}{q^9}}\SSeries{7}{28}{9}
	\nonumber\\&\quad
	+\frac{q^{-1}}{\jacprod{q,q^2,q^3}{q^9}}\SSeries{1}{-23}{9} 
	+\frac{q^{2}}{\jacprod{q,q^2,q^3}{q^9}}\SSeries{1}{4}{9}
.
\end{align}
Setting $q\mapsto q^9, b_1 = q^7, b_3 = q$ in Lemma \ref{ChanLemma2} gives
\begin{align*}
	\frac{\aqprod{q^9}{q^9}{\infty}^2}{\jacprod{q,q^{-1},q^7,q^{-7}}{q^9}}
	=&
		\frac{1}{\jacprod{q^{-14},q^{-6},q^{-8}}{q^9}}\SSeries{7}{28}{9}
		-
		\frac{q^7}{\jacprod{q^{14},q^{8},q^{6}}{q^9}}\SSeries{7}{1}{9}
		\\&
		+
		\frac{1}{\jacprod{q^{6},q^{-8},q^{-2}}{q^9}}\SSeries{1}{4}{9}
		-
		\frac{q}{\jacprod{q^{8},q^{-6},q^{2}}{q^9}}\SSeries{1}{-23}{9}
.
\end{align*}
Simplifying the products gives (\ref{3DissectionEq3b}).

For (\ref{3DissectionEq4}), we move all $\SSeries{z}{w}{q}$ terms to one side
and multiply by $\jacprod{q^2}{q^9}$, so that
(\ref{3DissectionEq4}) is equivalent to
\begin{align}
	\label{3DissectionEq4a}
	&\frac{\aqprod{q^9}{q^9}{\infty}^2\jacprod{q^2,q^3}{q^9}}{\jacprod{q,q^4}{q^9}}		
	\nonumber\\
	&=
	\jacprod{q}{q^9}\Parans{\SSeries{1}{-14}{9} + q\SSeries{1}{-5}{9}}
	+\jacprod{q^2}{q^9}\Parans{-q^3\SSeries{4}{-2}{9} - q^7\SSeries{4}{7}{9}}
.
\end{align}
However, by (\ref{SigmaProperty4}),
the right hand side of (\ref{3DissectionEq4a}) is
\begin{align*}		
	&
		\jacprod{q}{q^9}\Parans{q^{-1}\SSeries{1}{-23}{9} 
			+ q^{2}\SSeries{1}{4}{9} - q^{-1}\J{q^7}{q^9}}
		\\&
		+\jacprod{q^2}{q^9}\Parans{-q^{-1}\SSeries{4}{-11}{9} 
			- q^{11}\SSeries{4}{16}{9} + q^{-1}\J{q}{q^9}}
	\\
	&=
	\jacprod{q}{q^9}\Parans{q^{-1}\SSeries{1}{-23}{9} 
		+ q^{2}\SSeries{1}{4}{9}}
	+\jacprod{q^2}{q^9}\Parans{-q^{-1}\SSeries{4}{-11}{9} 
		- q^{11}\SSeries{4}{16}{9}}
.
\end{align*}
With the above and dividing (\ref{3DissectionEq4a}) by 
$\jacprod{q,q^2,q^3,q^4}{q^9}$,
we have that (\ref{3DissectionEq4}) is equivalent to
\begin{align}\label{3DissectionEq4b}
	\frac{\aqprod{q^9}{q^9}{\infty}^2}{\jacprod{q,q,q^4,q^4}{q^9}}
	&=
	\frac{q^{-1}}{\jacprod{q^2,q^3,q^4}{q^9}}\SSeries{1}{-23}{9} 
	+\frac{q^{2}}{\jacprod{q^2,q^3,q^4}{q^9}}\SSeries{1}{4}{9}
	\nonumber\\&\quad
	-\frac{q^{-1}}{\jacprod{q,q^3,q^4}{q^9}}\SSeries{4}{-11}{9} 
	-\frac{q^{11}}{\jacprod{q,q^3,q^4}{q^9}}\SSeries{4}{16}{9}
.	
\end{align}
Setting $q\mapsto q^9, b_1 = q, b_3 = q^4$ in Lemma \ref{ChanLemma2} gives
\begin{align*}
	\frac{\aqprod{q^9}{q^9}{\infty}^2}{\jacprod{q,q^{-1},q^4,q^{-4}}{q^9}}
	=&
		\frac{1}{\jacprod{q^{-2},q^{3},q^{-5}}{q^9}}\SSeries{1}{4}{9}
		-
		\frac{q}{\jacprod{q^{2},q^{5},q^{-3}}{q^9}}\SSeries{1}{-23}{9}
		\\&
		+
		\frac{1}{\jacprod{q^{-3},q^{-5},q^{-8}}{q^9}}\SSeries{4}{16}{9}
		-
		\frac{q^4}{\jacprod{q^{5},q^{3},q^{8}}{q^9}}\SSeries{4}{-11}{9}
.
\end{align*}
Simplifying the products gives (\ref{3DissectionEq4b}).
\end{proof}

Next we make use of
\begin{align}\label{3DissectionOfEta}
	\aqprod{q}{q}{\infty}
	&=
	\aqprod{q^{27}}{q^{27}}{\infty}
	\Parans{\jacprod{q^{12}}{q^{27}} - q\jacprod{q^{6}}{q^{27}} -q^2\jacprod{q^{3}}{q^{27}}}
,
\end{align}
which follows from Euler's pentagonal number theorem and the 
Jacobi triple product identity.
\begin{proposition}\label{Prop3Zeta3}
\begin{align*}
	&\frac{\aqprod{q^{27}}{q^{27}}{\infty}^2\jacprod{q^9}{q^{27}}\jacprod{q^{12}}{q^{27}}}
	{\jacprod{q^3}{q^{27}}\jacprod{q^6}{q^{27}}^2}
	-
	2q^2\frac{\aqprod{q^{27}}{q^{27}}{\infty}^2\jacprod{q^9}{q^{27}}}
	{\jacprod{q^3}{q^{27}}\jacprod{q^{12}}{q^{27}}}
	-
	q^4\frac{\aqprod{q^{27}}{q^{27}}{\infty}^2\jacprod{q^9}{q^{27}}}
	{\jacprod{q^{12}}{q^{27}}^2}
	\\
	&=
	\frac{\aqprod{q}{q}{\infty}\aqprod{q^{27}}{q^{27}}{\infty}\jacprod{q^9}{q^{27}}}
		{\jacprod{q^3}{q^{27}}^2\jacprod{q^{12}}{q^{27}}}
	+
	q\frac{\aqprod{q}{q}{\infty}\aqprod{q^{9}}{q^{9}}{\infty}}{\jacprod{q^3}{q^9}}
.
\end{align*}
\end{proposition}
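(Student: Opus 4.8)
The plan is to collapse the whole identity to a single Jacobi--theta relation of modulus $9$, which is then the only nontrivial input. First I would pass everything to base $q^{27}$: writing out the relevant infinite products and regrouping their factors by the residue of the exponent modulo $27$, together with $\jacprod{z}{q}=\jacprod{q/z}{q}$ from Proposition \ref{PropositionMiscIdents}, one gets
\[
\aqprod{q^9}{q^9}{\infty}=\aqprod{q^{27}}{q^{27}}{\infty}\jacprod{q^9}{q^{27}},
\qquad
\jacprod{q^3}{q^9}=\jacprod{q^3}{q^{27}}\jacprod{q^6}{q^{27}}\jacprod{q^{12}}{q^{27}}.
\]
Inserting these into the last term on the right-hand side makes $\aqprod{q^{27}}{q^{27}}{\infty}\jacprod{q^9}{q^{27}}$ a common factor of every term on both sides, and it may be divided out.

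Writing $J_1=\jacprod{q^3}{q^{27}}$, $J_2=\jacprod{q^6}{q^{27}}$, $J_4=\jacprod{q^{12}}{q^{27}}$, the identity becomes
\[
\aqprod{q^{27}}{q^{27}}{\infty}\Parans{\frac{J_4}{J_1J_2^2}-\frac{2q^2}{J_1J_4}-\frac{q^4}{J_4^2}}
=\aqprod{q}{q}{\infty}\,\frac{J_2+qJ_1}{J_1^2J_2J_4}.
\]
Multiplying through by $J_1^2J_2^2J_4^2$ and using (\ref{3DissectionOfEta}) to replace $\aqprod{q}{q}{\infty}$ by $\aqprod{q^{27}}{q^{27}}{\infty}\Parans{J_4-qJ_2-q^2J_1}$, the factor $\aqprod{q^{27}}{q^{27}}{\infty}$ cancels and one is left with a polynomial identity in $J_1,J_2,J_4$. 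Expanding the two sides and subtracting, a short computation shows their difference equals $\Parans{J_4-qJ_2}\Parans{J_1J_4^2-J_2^2J_4+q^3J_1^2J_2}$. Since $J_4-qJ_2=\jacprod{q^{12}}{q^{27}}-q\jacprod{q^6}{q^{27}}$ has constant term $1$, hence is not identically zero, the whole proposition is equivalent to
\[
J_1J_4^2-J_2^2J_4+q^3J_1^2J_2=0,
\]
which, replacing $q^3$ by $q$, is the modulus-$9$ theta identity
\[
\jacprod{q}{q^9}\jacprod{q^4}{q^9}^2-\jacprod{q^2}{q^9}^2\jacprod{q^4}{q^9}+q\,\jacprod{q}{q^9}^2\jacprod{q^2}{q^9}=0.
\]

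Everything up to here is formal bookkeeping; proving the last displayed identity is the main obstacle, since it is a genuine relation among the theta functions $\jacprod{q}{q^9}$, $\jacprod{q^2}{q^9}$, $\jacprod{q^4}{q^9}$ rather than something got by rearranging products. I would establish it by one of three routes: expand each factor through $j(z;q)=\sum_{n}(-1)^n z^n q^{n(n-1)/2}$, multiply out the three resulting bilateral triple sums, and check that they cancel after a linear change of the summation indices; derive it from a Schr\"oter-type three-term theta relation with two of the four theta arguments forced to coincide so as to produce the square factors; or, most expediently, note that after multiplying by $\aqprod{q^9}{q^9}{\infty}^3$ all three summands become holomorphic theta series of the same small weight on a common congruence subgroup, so that matching $q$-coefficients up to a Sturm-type bound completes the proof. (The identity is in fact classical---it is one of the relations underlying Ramanujan's cubic continued fraction---so it may also simply be quoted.)
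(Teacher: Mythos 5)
Your reduction is correct and lands exactly where the paper's does. The factorizations $\aqprod{q^9}{q^9}{\infty}=\aqprod{q^{27}}{q^{27}}{\infty}\jacprod{q^9}{q^{27}}$ and $\jacprod{q^3}{q^9}=\jacprod{q^3,q^6,q^{12}}{q^{27}}$ are right, your cleared-denominator identity checks out, and I verified that the difference of the two sides does factor as $\Parans{J_4-qJ_2}\Parans{J_1J_4^2-J_2^2J_4+q^3J_1^2J_2}$, so cancelling the unit $J_4-qJ_2$ reduces the proposition to precisely the paper's equation (\ref{ProductMessEq4}), $q\jacprod{q,q,q^2}{q^9}=\jacprod{q^2,q^2,q^4}{q^9}-\jacprod{q,q^4,q^4}{q^9}$. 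The paper organizes the intermediate algebra differently (it splits the cleared identity into the two sub-identities (\ref{ProductMessEq2}) and (\ref{ProductMessEq3}) and observes one implies the other), but your single factorization is the cleaner route to the same destination. The only incomplete point is that you leave the cubic theta relation itself to a menu of standard methods rather than proving it; the paper disposes of it by specializing Jacobi's four-term identity, equation (2.1) of \cite{GarvanYesilyurt}, with $q\mapsto q^9$, $x=q$, $t=y=z=q^2$ (each of the three products acquires a common factor $\jacprod{q^3}{q^9}$ which is divided out). That is exactly your second suggested route, a Schr\"oter/Jacobi three-term relation with coinciding arguments, so your argument is complete once that citation is made precise.
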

\begin{proof}
Multiplying both sides by
$\frac{\jacprod{q^3,q^{12}}{q^{27}}}{\aqprod{q^{27}}{q^{27}}{\infty}\jacprod{q^9}{q^{27}}}$,
we find this proposition is equivalent to
\begin{align}
\label{ProductMessEq1}
	\aqprod{q^{27}}{q^{27}}{\infty}\Parans{
		\frac{\jacprod{q^{12}}{q^{27}}^2}{\jacprod{q^6}{q^{27}}^2}
		-
		2q^2
		-
		q^4\frac{\jacprod{q^3}{q^{27}}}{\jacprod{q^{12}}{q^{27}}}
	}
	&=
	\aqprod{q}{q}{\infty}\Parans{
		\frac{1}{\jacprod{q^3}{q^{27}}}
		+\frac{q}{\jacprod{q^3}{q^{27}}\jacprod{q^6}{q^{27}}}
	}
.
\end{align}
By (\ref{3DissectionOfEta}) we see (\ref{ProductMessEq1}) reduces to proving
\begin{align}
	\label{ProductMessEq2}
	\frac{\jacprod{q^{12}}{q^{27}}^2}{\jacprod{q^6}{q^{27}}^2}
	&=	
		\frac{\jacprod{q^{12}}{q^{27}}}{\jacprod{q^3}{q^{27}}}
		-q^3\frac{\jacprod{q^{3}}{q^{27}}}{\jacprod{q^6}{q^{27}}}
	,\\
	\label{ProductMessEq3}
	-q^4\frac{\jacprod{q^3}{q^{27}}}{\jacprod{q^{12}}{q^{27}}}	
	&=	
		-q\frac{\jacprod{q^{6}}{q^{27}}}{\jacprod{q^3}{q^{27}}}
		+q\frac{\jacprod{q^{12}}{q^{27}}}{\jacprod{q^6}{q^{27}}}
.
\end{align}
However (\ref{ProductMessEq2}) follows from
multiplying (\ref{ProductMessEq3}) 
by $q\frac{\jacprod{q^{6}}{q^{27}}}{\jacprod{q^{12}}{q^{27}}}$ and 
elementary rearrangements. 
In (\ref{ProductMessEq3}) we
replace $q$ by $q^{1/3}$ and clear denominators to see we need only prove that
\begin{align}
	q\jacprod{q,q,q^2}{q^9}
	&=
	\jacprod{q^2,q^2,q^4}{q^9}
	-\jacprod{q,q^4,q^4}{q^9}
.
\label{ProductMessEq4}
\end{align}
However, this follows from the $q\mapsto q^9, x=q,t=y=z=q^2$ case of equation (2.1) 
from \cite{GarvanYesilyurt},
which is an identity of Jacobi:
\begin{align*}
	\frac{z}{x}\jacprod{y,x,xt/z,zty}{q}
	&=
	\jacprod{z,t,xty,zy/x}{q}
	-\jacprod{xt,zy,ty,z/x}{q}
.	
\end{align*}

\end{proof}

Letting $\zeta_3$ be a primitive third root of unity, we find that
\begin{align}
	\ST(\zeta_3,q)
	&=
	\frac{1}{\aqprod{q}{q}{\infty}}
	\sum_{n=-\infty}^\infty
	\frac{q^{6n^2+4n+1}(1-q^{6n+2})(1-q^{3n+1})}{1-q^{9n+3}}
	\nonumber\\
	&=
	\frac{1}{\aqprod{q}{q}{\infty}}
	\Parans{
	qU_3(4) - q^2U_3(7) - q^3U_3(10) + q^4U(13)
	}
\label{Theorem2Eq1}
.
\end{align}

Using this form of $\ST(\zeta_3,q)$ in terms of the
$U_3(b)$, we proceed in a manner similar to how Atkin and Swinnerton-Dyer in
\cite{AS} 
determined rank difference formulas for Dyson's rank of a partition. 
This was also used
to determine crank difference formulas by Ekin in \cite{Ekin}
and various rank difference formulas related to overpartitions by
Lovejoy and Osburn in \cite{LO1,LO2,LO3}. Here the major difference is that
in $\SSeries{z}{w}{q}$ we never use $z=1$ and so we do not have to introduce 
an extra
function to avoid the issue at $n=0$. 
To begin we note that
\begin{align*}
	U_3(b) &= \sum_{n=-\infty}^\infty \frac{q^{6n^2 + bn}}{1-q^{9n+3}}
	\\
	&= 
	\sum_{k=0}^2\sum_{n=-\infty}^\infty\frac{q^{6(3n+k)^2 + b(3n+k)}}{1-q^{9(3n+k)+3}}
	\\
	&=
	\sum_{k=0}^2 q^{6k^2+bk}
	\sum_{n=-\infty}^\infty \frac{ q^{54n(n+1)}q^{36nk+3nb-54n}}
		{1-q^{9k+3}q^{27n}}
	\\
	&=
	\sum_{k=0}^2 q^{6k^2+bk}
	\SSeries{9k+3}{36k+3b-54}{27}
.
\end{align*}
Thus
\begin{align*}
	&qU_3(4) - q^2U_3(7) - q^3U_3(10) + q^4U_3(13)
	\\
	&=
		q\Sigma(3,-42,27) 		
		+q^{11}\Sigma(12,-6,27) 		
		+q^{33}\Sigma(21,30,27)
		-q^2\Sigma(3,-33,27)  
		-q^{15}\Sigma(12,3,27)
		\\&\quad
		- q^{40}\Sigma(21,39,27)
		-q^3\Sigma(3,-24,27)
		- q^{19}\Sigma(12,12,27) 
		- q^{47}\Sigma(21,48,27)
		+q^4\Sigma(3,-15,27)  
		\\&\quad
		+q^{54}\Sigma(21,57,27)
		+q^{23}\Sigma(12,21,27)
	\\
	&=
		q\Sigma(3,-42,27) 		
		+q^4\Sigma(3,-15,27)  
		-q^3\Sigma(3,-24,27)
		-q^{15}\Sigma(12,3,27)
		\\&\quad
		+q^{33}\Sigma(21,30,27)
		+q^{54}\Sigma(21,57,27)
		+q^{23}\Sigma(12,21,27)
		+q^{11}\Sigma(12,-6,27) 		
		\\&\quad
		-q^2\Sigma(3,-33,27)  
		- q^{47}\Sigma(21,48,27)
		- q^{19}\Sigma(12,12,27) 
		- q^{40}\Sigma(21,39,27)
.
\end{align*}
In the last line we have ordered the terms to apply 
Propositions \ref{Prop1Zeta3} and \ref{Prop2Zeta3}
with $q\mapsto q^3$.
With these identities we have that
\begin{align*}
	&qU_3(4) - q^2U_3(7) - q^3U_3(10) + q^4U_3(13)
	\\
	&=
		\Parans{\Sigma(3,-42,27) +q^3\Sigma(3,-15,27)}
		\Parans{q + q^2\frac{\jacprod{q^3}{q^{27}}}{\jacprod{q^6}{q^{27}}} 
					 - \frac{\jacprod{q^{12}}{q^{27}}}{\jacprod{q^6}{q^{27}}}  
			}
		\\&\quad
		-
		\Parans{q^3\Sigma(3,-24,27) +q^{15}\Sigma(12,3,27)}
		\Parans{1 - q\frac{\jacprod{q^6}{q^{27}}}{\jacprod{q^{12}}{q^{27}}} 
					 - q^2\frac{\jacprod{q^3}{q^{27}}}{\jacprod{q^{12}}{q^{27}}}  
			}	
		\\&\quad
		+\frac{\aqprod{q^{27}}{q^{27}}{\infty}^2\jacprod{q^9,q^{12}}{q^{27}}}
		{\jacprod{q^3,q^6,q^6}{q^{27}}}
		-
		2q^2\frac{\aqprod{q^{27}}{q^{27}}{\infty}^2\jacprod{q^9}{q^{27}}}
		{\jacprod{q^3,q^{12}}{q^{27}}}
		-
		q^4\frac{\aqprod{q^{27}}{q^{27}}{\infty}^2\jacprod{q^9}{q^{27}}}
		{\jacprod{q^{12},q^{12}}{q^{27}}}
		.
\end{align*}
Next by (\ref{3DissectionOfEta}) and Proposition \ref{Prop3Zeta3}
we have that
\begin{align}
	&qU_3(4) - q^2U_3(7) - q^3U_3(10) + q^4U_3(13)
	\nonumber\\
	&=
		-
		\frac{\aqprod{q}{q}{\infty}}{\aqprod{q^{27}}{q^{27}}{\infty}\jacprod{q^6}{q^{27}}}
		\Parans{\Sigma(3,-42,27) +q^3\Sigma(3,-15,27)}
		\nonumber\\&\quad
		-
		\frac{\aqprod{q}{q}{\infty}}{\aqprod{q^{27}}{q^{27}}{\infty}\jacprod{q^{12}}{q^{27}}}
		\Parans{q^3\Sigma(3,-24,27) +q^{15}\Sigma(12,3,27)}
		\nonumber\\&\quad
		+
		\frac{\aqprod{q}{q}{\infty}\aqprod{q^{27}}{q^{27}}{\infty}\jacprod{q^9}{q^{27}}}
			{\jacprod{q^3}{q^{27}}^2\jacprod{q^{12}}{q^{27}}}
		+
		q\frac{\aqprod{q}{q}{\infty}\aqprod{q^{9}}{q^{9}}{\infty}}{\jacprod{q^3}{q^9}}
.
\label{Theorem2Eq2}
\end{align}
Theorem \ref{theorem2} now follows by equations (\ref{Theorem2Eq1}) and
(\ref{Theorem2Eq2}).

\section{Proof of Theorem \ref{theorem3}}

We require the following two Lemmas. Both are applications of Theorem 2.1 of 
\cite{Chan},
namely we take $s=6$, $r=2$, $b_4=b_1^{-1}$,
$b_5=b_2^{-1}$, $b_6=b_3^{-1}$ to obtain Lemma \ref{ChanLemma3} and
take 
$s=10$, $r=6$, $b_6=b_1^{-1}$,
$b_7=b_2^{-1}$, $b_8=b_3^{-1}$, $b_9=b_4^{-1}$, and $b_{10}=b_5^{-1}$ to obtain
Lemma \ref{ChanLemma4}.
\begin{lemma}\label{ChanLemma3}
\begin{align*}
	\frac{\jacprod{a_1,a_2}{q}\aqprod{q}{q}{\infty}^2}
		{\jacprod{b_1,b_1^{-1},b_2,b_2^{-1},b_3,b_3^{-1}}{q}}
	=&
		\frac{\jacprod{a_1b_1^{-1},a_2b_1^{-1}}{q}}
			{\jacprod{b_1^{-1}b_2,b_1^{-1}b_3,b_1^{-1}b_2^{-1},b_1^{-1}b_3^{-1},b_1^{-2}}{q}}
			\SSeries{b_1}{a_1a_2b_1^4}{q}
		\\&
		-
		\frac{b_1\jacprod{a_1b_1,a_2b_1}{q}}
			{\jacprod{b_1b_2,b_1b_3,b_1b_2^{-1},b_1b_3^{-1},b_1^{2}}{q}}
			\SSeries{b_1}{\frac{b_1^4}{a_1a_2q^3}}{q}
		\\&
		+\frac{\jacprod{a_1b_2^{-1},a_2b_2^{-1}}{q}}
			{\jacprod{b_1b_2^{-1},b_2^{-1}b_3,b_1^{-1}b_2^{-1},b_2^{-1}b_3^{-1},b_2^{-2}}{q}}
			\SSeries{b_2}{a_1a_2b_2^4}{q}
		\\&
		-\frac{b_2\jacprod{a_1b_2,a_2b_2}{q}}
			{\jacprod{b_1b_2,b_2b_3,b_1^{-1}b_2,b_2b_3^{-1},b_2^{2}}{q}}
			\SSeries{b_2}{\frac{b_2^4}{a_1a_2q^3}}{q}
		\\&
		+\frac{\jacprod{a_1b_3^{-1},a_2b_3^{-1}}{q}}
			{\jacprod{b_1b_3^{-1},b_2b_3^{-1},b_1^{-1}b_3^{-1},b_2^{-1}b_3^{-1},b_3^{-2}}{q}}
			\SSeries{b_3}{a_1a_2b_3^4}{q}
		\\&
		-\frac{b_3\jacprod{a_1b_3,a_2b_3}{q}}
			{\jacprod{b_1b_3,b_2b_3,b_1^{-1}b_3,b_2^{-1}b_3,b_3^{2}}{q}}
			\SSeries{b_3}{\frac{b_3^4}{a_1a_2q^3}}{q}
.
\end{align*}
\end{lemma}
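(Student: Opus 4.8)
The plan is to obtain this identity by the same specialization device used for Lemma~\ref{ChanLemma1}, one parameter-level higher. First I would start from the $s=6$, $r=2$ instance of Theorem 2.1 of \cite{Chan}, which (in the notation used in the proof of Lemma~\ref{ChanLemma1}) reads
\begin{align*}
	\frac{\jacprod{a_1,a_2}{q}\aqprod{q}{q}{\infty}^2}{\jacprod{b_1,b_2,b_3,b_4,b_5,b_6}{q}}
	&=
	\sum_{i=1}^{6}
	\frac{\jacprod{a_1/b_i,a_2/b_i}{q}}{\prod_{j\neq i}\jacprod{b_j/b_i}{q}}
	\SSeries{b_i}{\frac{b_i^{3}a_1a_2}{\prod_{j\neq i}b_j}}{q}
,
\end{align*}
which is valid because $s-r=4$. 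I would then substitute $b_4=b_1^{-1}$, $b_5=b_2^{-1}$, $b_6=b_3^{-1}$, noting first that for generic $b_1,b_2,b_3$ no denominator $\jacprod{b_j/b_i}{q}$ degenerates under this substitution (the extreme cases being $\jacprod{b_i^{\pm2}}{q}$), so the specialized identity holds as an identity of Laurent series in $b_1,b_2,b_3$.

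After the substitution the six summands split into three ``direct'' terms, indexed by $b_1,b_2,b_3$, and three ``reflected'' terms, indexed by $b_1^{-1},b_2^{-1},b_3^{-1}$. For the direct term $i=1$ one has $\prod_{j\neq1}b_j=b_2b_3b_1^{-1}b_2^{-1}b_3^{-1}=b_1^{-1}$, so the $\Sigma$-argument becomes $b_1^{3}a_1a_2\cdot b_1=a_1a_2b_1^{4}$, while the denominator $\prod_{j\neq1}\jacprod{b_j/b_1}{q}$ is literally $\jacprod{b_1^{-1}b_2,b_1^{-1}b_3,b_1^{-1}b_2^{-1},b_1^{-1}b_3^{-1},b_1^{-2}}{q}$ and the numerator is $\jacprod{a_1b_1^{-1},a_2b_1^{-1}}{q}$; this is exactly the first line of the Lemma, and $i=2,3$ give the third and fifth lines by symmetry. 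For the reflected term indexed by $b_4=b_1^{-1}$ one has $\prod_{j\neq4}b_j=b_1b_2b_3b_2^{-1}b_3^{-1}=b_1$, so that term equals
\begin{align*}
	\frac{\jacprod{a_1b_1,a_2b_1}{q}}{\jacprod{b_1b_2,b_1b_3,b_1b_2^{-1},b_1b_3^{-1},b_1^{2}}{q}}
	\SSeries{b_1^{-1}}{\frac{a_1a_2}{b_1^{4}}}{q}
,
\end{align*}
and applying the reflection identity (\ref{SigmaProperty1}), namely $\SSeries{b_1^{-1}}{\frac{a_1a_2}{b_1^{4}}}{q}=-b_1\SSeries{b_1}{\frac{b_1^{4}}{a_1a_2q^{3}}}{q}$, produces precisely the second line of the Lemma; the indices $b_2^{-1},b_3^{-1}$ give the fourth and sixth lines. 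Any cosmetic rewriting of the remaining $\jacprod{\cdot}{q}$ factors is handled by (\ref{JacProdProperty1}).

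There is no deep obstacle here: this is a computational lemma, the direct analogue of Lemmas~\ref{ChanLemma1} and \ref{ChanLemma2}. The only points demanding care are (a) confirming that the specialization $b_{i+3}=b_i^{-1}$ is legitimate in Chan's identity, and (b) applying (\ref{SigmaProperty1}) with the correct sign and the correct shift $w\mapsto w^{-1}q^{-3}$ when converting the three $b_i^{-1}$-terms into $b_i$-terms, exactly as was done for the single reflected term in the proof of Lemma~\ref{ChanLemma1}. I would also double-check the exponent in the $\Sigma$-argument, verifying it is $b_i^{3}$ (i.e.\ $b_i^{s-r-1}$ with $s-r=4$) by testing it against the already-verified $r=0$ case in the proof of Lemma~\ref{ChanLemma1}.
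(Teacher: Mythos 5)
Your proposal is correct and is essentially the paper's own argument: the paper obtains Lemma~\ref{ChanLemma3} precisely by taking the $s=6$, $r=2$ case of Theorem 2.1 of \cite{Chan} with $b_4=b_1^{-1}$, $b_5=b_2^{-1}$, $b_6=b_3^{-1}$, and converting the three reflected terms via (\ref{SigmaProperty1}) exactly as in the proof of Lemma~\ref{ChanLemma1}. Your computations of the $\Sigma$-arguments and of the product factors all check out against the stated identity.
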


\begin{lemma}\label{ChanLemma4}
\begin{align*}
	&\frac{\jacprod{a_1,a_2,a_3,a_4,a_5,a_6}{q}\aqprod{q}{q}{\infty}^2}
		{\jacprod{b_1,b_1^{-1},b_2,b_2^{-1},b_3,b_3^{-1},b_4,b_4^{-1},b_5,b_5^{-1}}{q}}
	\\
	&=
		\frac{\jacprod{a_1b_1^{-1},a_2b_1^{-1},a_3b_1^{-1},a_4b_1^{-1},a_5b_1^{-1},a_6b_1^{-1}}{q}}
		{\jacprod{b_1^{-1}b_2, b_1^{-1}b_3, b_1^{-1}b_4, b_1^{-1}b_5, 
			b_1^{-1}b_2^{-1}, b_1^{-1}b_3^{-1}, b_1^{-1}b_4^{-1}, b_1^{-1}b_5^{-1}, b_1^{-2}}{q}}
		\SSeries{b_1}{a_1a_2a_3a_4a_5a_6b_1^4}{q}
		\\&\quad
		-
		\frac{b_1\jacprod{a_1b_1,a_2b_1,a_3b_1,a_4b_1,a_5b_1,a_6b_1}{q}}
		{\jacprod{b_1b_2, b_1b_3, b_1b_4, b_1b_5,   
			b_1b_2^{-1}, b_1b_3^{-1}, b_1b_4^{-1}, b_1b_5^{-1},  b_1^{2}}{q}}
			\SSeries{b_1}{\frac{b_1^4}{a_1a_2a_3a_4a_5a_6q^3}}{q}
		\\&\quad
		+\frac{\jacprod{a_1b_2^{-1},a_2b_2^{-1},a_3b_2^{-1},a_4b_2^{-1},a_5b_2^{-1},a_6b_2^{-1}}{q}}
		{\jacprod{b_1b_2^{-1}, b_2^{-1}b_3, b_2^{-1}b_4, b_2^{-1}b_5, 
			b_1^{-1}b_2^{-1}, b_2^{-1}b_3^{-1}, b_2^{-1}b_4^{-1}, b_2^{-1}b_5^{-1}, b_2^{-2}}{q}}
		\SSeries{b_2}{a_1a_2a_3a_4a_5a_6b_3^4}{q}
		\\&\quad
		-
		\frac{b_2\jacprod{a_1b_2,a_2b_2,a_3b_2,a_4b_2,a_5b_2,a_6b_2}{q}}
		{\jacprod{b_1b_2, b_2b_3, b_2b_4, b_2b_5,   
			b_1^{-1}b_2, b_2b_3^{-1}, b_2b_4^{-1}, b_2b_5^{-1},  b_2^{2}}{q}}
			\SSeries{b_2}{\frac{b_2^4}{a_1a_2a_3a_4a_5a_6q^3}}{q}
		\\&\quad
		+\frac{\jacprod{a_1b_3^{-1},a_2b_3^{-1},a_3b_3^{-1},a_4b_3^{-1},a_5b_3^{-1},a_6b_3^{-1}}{q}}
		{\jacprod{b_1b_3^{-1}, b_2b_3^{-1}, b_3^{-1}b_4, b_3^{-1}b_5, 
			b_1^{-1}b_3^{-1}, b_2^{-1}b_3^{-1}, b_3^{-1}b_4^{-1}, b_3^{-1}b_5^{-1}, b_3^{-2}}{q}}
		\SSeries{b_3}{a_1a_2a_3a_4a_5a_6b_3^4}{q}
		\\&\quad
		-
		\frac{b_3\jacprod{a_1b_3,a_2b_3,a_3b_3,a_4b_3,a_5b_3,a_6b_3}{q}}
		{\jacprod{b_1b_3, b_2b_3, b_3b_4, b_3b_5,   
			b_1^{-1}b_3, b_2^{-1}b_3, b_3b_4^{-1}, b_3b_5^{-1}, b_3^{2}}{q}}
		\SSeries{b_3}{\frac{b_3^4}{a_1a_2a_3a_4a_5a_6q^3}}{q}
		\\&\quad
		+\frac{\jacprod{a_1b_4^{-1},a_2b_4^{-1},a_3b_4^{-1},a_4b_4^{-1},a_5b_4^{-1},a_6b_4^{-1}}{q}}
		{\jacprod{b_1b_4^{-1}, b_2b_4^{-1}, b_3b_4^{-1}, b_4^{-1}b_5, 
			b_1^{-1}b_4^{-1}, b_2^{-1}b_4^{-1}, b_3^{-1}b_4^{-1}, b_4^{-1}b_5^{-1}, b_4^{-2}}{q}}
		\SSeries{b_4}{a_1a_2a_3a_4a_5a_6b_4^4}{q}
		\\&\quad
		-
		\frac{b_4\jacprod{a_1b_4,a_2b_4,a_3b_4,a_4b_4,a_5b_4,a_6b_4}{q}}
		{\jacprod{b_1b_4, b_2b_4, b_3b_4, b_4b_5,   
			b_1^{-1}b_4, b_2^{-1}b_4, b_3^{-1}b_4, b_4b_5^{-1}, b_4^{2}}{q}}
		\SSeries{b_4}{\frac{b_4^4}{a_1a_2a_3a_4a_5a_6q^3}}{q}
		\\&\quad
		+\frac{\jacprod{a_1b_5^{-1},a_2b_5^{-1},a_3b_5^{-1},a_4b_5^{-1},a_5b_5^{-1},a_6b_5^{-1}}{q}}
		{\jacprod{b_1b_5^{-1}, b_2b_5^{-1}, b_3b_5^{-1}, b_4b_5^{-1}, 
			b_1^{-1}b_5^{-1}, b_2^{-1}b_5^{-1}, b_3^{-1}b_5^{-1}, b_4^{-1}b_5^{-1}, b_5^{-2}}{q}}
		\SSeries{b_5}{a_1a_2a_3a_4a_5a_6b_5^4}{q}
		\\&\quad
		-
		\frac{b_5\jacprod{a_1b_5,a_2b_5,a_3b_5,a_4b_5,a_5b_5,a_6b_5}{q}}
		{\jacprod{b_1b_5, b_2b_5, b_3b_5, b_4b_5,   
			b_1^{-1}b_5, b_2^{-1}b_5, b_3^{-1}b_5, b_4^{-1}b_5, b_5^{2}}{q}}
		\SSeries{b_5}{\frac{b_5^4}{a_1a_2a_3a_4a_5a_6q^3}}{q}
.
\end{align*}
\end{lemma}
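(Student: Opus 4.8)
The plan is to read Lemma \ref{ChanLemma4} off Theorem 2.1 of \cite{Chan} via the parameter choice announced just above the lemma, in exact parallel with Lemma \ref{ChanLemma3} but with more theta factors to carry along. First I would write down the $s=10$, $r=6$ case of Theorem 2.1 of \cite{Chan}: with numerator parameters $a_1,\dots,a_6$ and denominator parameters $b_1,\dots,b_{10}$ it asserts that $\dfrac{\jacprod{a_1,\dots,a_6}{q}\,\aqprod{q}{q}{\infty}^2}{\jacprod{b_1,\dots,b_{10}}{q}}$ equals the sum over $i=1,\dots,10$ of
\[
\frac{\jacprod{a_1/b_i,\dots,a_6/b_i}{q}}{\prod_{j\neq i}\jacprod{b_j/b_i}{q}}\;\SSeries{b_i}{\frac{b_i^{3}\,a_1a_2a_3a_4a_5a_6}{\prod_{j\neq i}b_j}}{q},
\]
which is exactly the shape of the $s=4$, $r=0$ identity displayed before Lemma \ref{ChanLemma1} (now with six numerator theta functions) and of the $s=6$, $r=2$ case used for Lemma \ref{ChanLemma3}.

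Next I would substitute $b_{6}=b_1^{-1}$, $b_{7}=b_2^{-1}$, $b_{8}=b_3^{-1}$, $b_{9}=b_4^{-1}$, $b_{10}=b_5^{-1}$, so that $\prod_{j=1}^{10}b_j=1$ and hence $\prod_{j\neq i}b_j=b_i^{-1}$ for $i\le5$ while $\prod_{j\neq 5+k}b_j=b_k$ for $k=1,\dots,5$. For $i\le5$ the $i$-th summand then has Lambert argument $a_1a_2a_3a_4a_5a_6\,b_i^{4}$, numerator $\jacprod{a_1b_i^{-1},\dots,a_6b_i^{-1}}{q}$, and denominator the nine factors $\jacprod{b_i^{-1}b_\ell,\,b_i^{-1}b_\ell^{-1}}{q}$ for $\ell\in\{1,\dots,5\}\setminus\{i\}$ together with $\jacprod{b_i^{-2}}{q}$; these are the five positive terms of the statement. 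For the index $i=5+k$ the summand first reads $\SSeries{b_k^{-1}}{a_1a_2a_3a_4a_5a_6/b_k^{4}}{q}$ times a theta quotient, and \eqref{SigmaProperty1} converts the Lambert factor into $-b_k\,\SSeries{b_k}{\dfrac{b_k^{4}}{a_1a_2a_3a_4a_5a_6\,q^{3}}}{q}$, simultaneously producing the sign $-b_k$ and the $q^{-3}$ shift; the numerator becomes $\jacprod{a_1b_k,\dots,a_6b_k}{q}$ and the denominator the nine factors appearing in the matching ``$-b_k$'' term. Collecting the ten contributions (the term indexed $i$ and the one indexed $5+i$ being displayed consecutively) is the asserted identity.

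I do not anticipate any conceptual difficulty: Lemma \ref{ChanLemma4} stands to Lemma \ref{ChanLemma3} as five inverse pairs $(b_k,b_k^{-1})$ and six numerator parameters stand to three pairs and two numerator parameters. The work — and the one thing needing care — is entirely bookkeeping: checking summand by summand that the nine denominator factors and six numerator factors come out in exactly the displayed normalization (reordering, or using \eqref{JacProdProperty1}, whenever a factor emerges as $\jacprod{q/z}{q}$ rather than $\jacprod{z}{q}$), and verifying that for each of the five inverted summands the single use of \eqref{SigmaProperty1} leaves the monomial prefactor equal to precisely $-b_k$, with no residual power of $b_k$ or of $q$. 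I would also note that, for generic $b_1,\dots,b_5$, no factor $\jacprod{b_j/b_i}{q}$ degenerates to $\jacprod{1}{q}=0$ after the substitution, so the identity holds for generic parameters; that suffices, since it is specialized to powers of $q$ only later, in the proof of Theorem \ref{theorem3}.
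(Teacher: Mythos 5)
Your proposal is correct and is exactly the paper's proof: the paper derives Lemma \ref{ChanLemma4} from the $s=10$, $r=6$ case of Theorem 2.1 of \cite{Chan} with $b_{5+k}=b_k^{-1}$, converting each $\SSeries{b_k^{-1}}{\cdot}{q}$ term via (\ref{SigmaProperty1}) just as in the displayed proof of Lemma \ref{ChanLemma1}. Your bookkeeping (the products $\prod_{j\ne i}b_j=b_i^{\pm1}$, the nine denominator factors, and the $-b_k$ prefactor) all checks out; incidentally, it also confirms that the $b_3^4$ appearing in the positive $b_2$ term of the lemma as printed is a typo for $b_2^4$.
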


\sloppy
In the following Propositions we will repeatedly use that 
$\jacprod{q,q^4,q^6}{q^{15}} = \jacprod{q}{q^5}$ and 
$\jacprod{q^2,q^3,q^7}{q^{15}} = \jacprod{q^2}{q^5}$ 
in reducing the products.
The proofs of these Propositions are similar to the proofs of Propositions
\ref{Prop1Zeta3} and \ref{Prop2Zeta3}. By moving the
$\SSeries{z}{w}{q}$ terms all to one side
and multiplying by the appropriate
product, we find the identities are
equivalent to a specialization of Lemma \ref{ChanLemma3} or
\ref{ChanLemma4}.

\fussy

\begin{proposition}\label{Prop1Zeta5}
\begin{align}
	&\SSeries{7}{-2}{15}
	+q^7\SSeries{7}{13}{15}
	+q^{16}\SSeries{13}{22}{15}
	+q^{29}\SSeries{13}{37}{15}
	\nonumber\\\label{5DissectionEq1}	
	&=
	\frac{\jacprod{q}{q^{5}}}
	{\jacprod{q^{2}}{q^{5}}}	
	\Parans{q^{7}\SSeries{10}{10}{15}+q^{17}\SSeries{10}{25}{15}}
	-
	q^{-6}\frac{\aqprod{q^{3}}{q^{3}}{\infty}^3}
		{\aqprod{q^5}{q^5}{\infty}\jacprod{q^{2}}{q^{5}}^2}
	,\\
	&\SSeries{1}{-26}{15} + q\SSeries{1}{-11}{15} + q^{2}\SSeries{4}{-14}{15}
	+q^{6}\SSeries{4}{1}{15}
	\nonumber\\\label{5DissectionEq2}
	&=
		-\frac{\jacprod{q^{2}}{q^{5}}}{\jacprod{q}{q^{5}}}
		\Parans{q^{13}\SSeries{10}{10}{15}+q^{23}\SSeries{10}{25}{15}}
		+
		\frac{\aqprod{q^3}{q^3}{\infty}^3}
			{\aqprod{q^5}{q^5}{\infty}\jacprod{q}{q^{15}}^2}
.
\end{align}
\end{proposition}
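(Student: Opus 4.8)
The plan is to prove (\ref{5DissectionEq1}) and (\ref{5DissectionEq2}) by the template already used for Propositions \ref{Prop1Zeta3} and \ref{Prop2Zeta3}: each identity is, after rearrangement, equivalent to a single specialization of Lemma \ref{ChanLemma3} with $q\mapsto q^{15}$. The structural reason this can work is that, once the prefactor $\jacprod{q}{q^5}/\jacprod{q^2}{q^5}$ is cleared, each identity involves exactly six $\SSeries{z}{w}{q}$ terms, occurring in pairs at three values of the first argument --- $q^7,q^{10},q^{13}$ for (\ref{5DissectionEq1}) and $q,q^4,q^{10}$ for (\ref{5DissectionEq2}) --- which is exactly the shape of the right side of Lemma \ref{ChanLemma3}, whose six $\Sigma$'s pair off at three bases $b_1,b_2,b_3$.

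For a fixed identity I would first move every $\SSeries{z}{w}{q}$ term to one side and the $\aqprod{q^3}{q^3}{\infty}^3$ term to the other, and then multiply through by the Jacobi product needed to clear the $\jacprod{q}{q^5}$ (resp.\ $\jacprod{q^2}{q^5}$) from the $\Sigma$-coefficients and to reconcile the $\aqprod{q^3}{q^3}{\infty}^3$ term with the $\aqprod{q^{15}}{q^{15}}{\infty}^2$ on the left side of Lemma \ref{ChanLemma3}; here one uses the splittings $\aqprod{q}{q}{\infty}=\aqprod{q^5}{q^5}{\infty}\jacprod{q,q^2}{q^5}$, $\aqprod{q^3}{q^3}{\infty}=\aqprod{q^{15}}{q^{15}}{\infty}\jacprod{q^3,q^6}{q^{15}}$, and $\aqprod{q^5}{q^5}{\infty}=\aqprod{q^{15}}{q^{15}}{\infty}\jacprod{q^5}{q^{15}}$, which together convert $\aqprod{q^3}{q^3}{\infty}^3\big/\big(\aqprod{q^5}{q^5}{\infty}\jacprod{q^2}{q^5}^2\big)$ into $\aqprod{q^{15}}{q^{15}}{\infty}^2$ times a ratio of level-$q^{15}$ Jacobi products. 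Before matching, the individual $\Sigma$'s have to be normalized into the canonical forms $\SSeries{b_i}{a_1a_2b_i^4}{q^{15}}$ and $\SSeries{b_i}{b_i^4/(a_1a_2q^{45})}{q^{15}}$; this is done with the symmetry (\ref{SigmaProperty3}) and, where a genuine relation is needed, with the three-term identity (\ref{SigmaProperty4}), whose $j(\cdot;\cdot)$ terms cancel in pairs exactly as in the proof of Proposition \ref{Prop2Zeta3}.

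Once the identity is in this form it agrees term by term with Lemma \ref{ChanLemma3} at $q\mapsto q^{15}$, for the stated $b_1,b_2,b_3$ and for the values of $a_1,a_2$ forced by matching the exponents occurring in the $w$-arguments; what remains is to verify that the six $\Sigma$-coefficients agree and that the product term reduces correctly. That last part is pure theta-product bookkeeping: expand each $\jacprod{\cdot}{q^{15}}$ produced by the specialization, apply (\ref{JacProdProperty1})--(\ref{JacProdProperty3}) to bring negative and out-of-range powers of $q$ into standard position (picking up monomials and signs), and collapse the triples $\jacprod{q,q^4,q^6}{q^{15}}=\jacprod{q}{q^5}$ and $\jacprod{q^2,q^3,q^7}{q^{15}}=\jacprod{q^2}{q^5}$ until the compact right sides of (\ref{5DissectionEq1}) and (\ref{5DissectionEq2}) --- including the lone terms $q^{-6}\aqprod{q^3}{q^3}{\infty}^3\big/\big(\aqprod{q^5}{q^5}{\infty}\jacprod{q^2}{q^5}^2\big)$ and $\aqprod{q^3}{q^3}{\infty}^3\big/\big(\aqprod{q^5}{q^5}{\infty}\jacprod{q}{q^{15}}^2\big)$ --- are recovered.

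The main obstacle is pinning down the specialization: choosing the assignment of $\{q^7,q^{10},q^{13}\}$ (resp.\ $\{q,q^4,q^{10}\}$) to $b_1,b_2,b_3$, the values of $a_1,a_2$, the overall multiplying Jacobi product, and the exact sequence of applications of (\ref{SigmaProperty3}) and (\ref{SigmaProperty4}), so that after the substitution $q\mapsto q^{15}$ all six of Lemma \ref{ChanLemma3}'s Jacobi-product coefficients collapse simultaneously to the forms appearing in (\ref{5DissectionEq1}) and (\ref{5DissectionEq2}). Every individual manipulation is elementary, but making the powers of $q$ and the signs line up in all six $\Sigma$-terms and in the product term at the same time is where essentially all the work lies.
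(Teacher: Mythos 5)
Your plan coincides with the paper's proof: both identities are single specializations of Lemma \ref{ChanLemma3} with $q\mapsto q^{15}$ and exactly the $b_i$'s you name ($b_1=q^7$, $b_2=q^{10}$, $b_3=q^{13}$ for (\ref{5DissectionEq1}); $b_1=q$, $b_2=q^4$, $b_3=q^{10}$ for (\ref{5DissectionEq2})), followed by Jacobi-product bookkeeping via (\ref{JacProdProperty1})--(\ref{JacProdProperty3}) and the collapses $\jacprod{q,q^4,q^6}{q^{15}}=\jacprod{q}{q^5}$, $\jacprod{q^2,q^3,q^7}{q^{15}}=\jacprod{q^2}{q^5}$. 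The one genuine omission is that you never exhibit $a_1,a_2$: matching the $w$-arguments only forces the product $a_1a_2=q^{-30}$ in both cases, and the individual values are what make the left-hand product and all six coefficient products simplify simultaneously. The correct choices are $a_1=q^{-9}$, $a_2=q^{-21}$ for (\ref{5DissectionEq1}) (so $\jacprod{a_1,a_2}{q^{15}}$ is a monomial times $\jacprod{q^6}{q^{15}}^2$, producing the $\aqprod{q^3}{q^3}{\infty}^3/(\aqprod{q^5}{q^5}{\infty}\jacprod{q^2}{q^5}^2)$ term) and $a_1=q^{-12}$, $a_2=q^{-18}$ for (\ref{5DissectionEq2}) (giving $\jacprod{q^3}{q^{15}}^2$ and hence the $\jacprod{q}{q^{15}}^2$ denominator). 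With these choices all six $w$-arguments $a_1a_2b_i^4$ and $b_i^4/(a_1a_2q^{45})$ already equal the arguments appearing in the statement, so---unlike in Propositions \ref{Prop1Zeta3} and \ref{Prop2Zeta3}---no application of (\ref{SigmaProperty3}) or (\ref{SigmaProperty4}) is needed at all; your fallback to the three-term identity is a red herring here and would only complicate the matching. Once $a_1,a_2$ are fixed, the rest of your outline (clearing the prefactor by multiplying through by $\jacprod{q^3,q^5,q^6}{q^{15}}$ and reducing the products) goes through exactly as in the paper.
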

\begin{proof}
In Lemma \ref{ChanLemma3} we use $q\mapsto q^{15}$, $a_1=q^{-9}$, $a_2=q^{-21}$,
$b_1=q^7$, $b_2=q^{10}$, $b_3=q^{13}$ to get

\begin{align*}
	&\frac{\jacprod{q^{-9},q^{-21}}{q^{15}}\aqprod{q^{15}}{q^{15}}{\infty}^2}
		{\jacprod{q^{7},q^{10},q^{13},q^{-7},q^{-10},q^{-13}}{q^{15}}}
	\\
	&= 
		\frac{\jacprod{q^{-16},q^{-28}}{q^{15}}}
			{\jacprod{q^{3},q^{6},q^{-14},q^{-17},q^{-20}}{q^{15}}}
		\SSeries{7}{-2}{15}
		-
		q^{7}\frac{\jacprod{q^{-2},q^{-14}}{q^{15}}}
			{\jacprod{q^{14},q^{17},q^{20},q^{-3},q^{-6}}{q^{15}}}
		\SSeries{7}{13}{15}
		\\&\quad
		+
		\frac{\jacprod{q^{-19},q^{-31}}{q^{15}}}
			{\jacprod{q^{-3},q^{3},q^{-17},q^{-20},q^{-23}}{q^{15}}}
		\SSeries{10}{10}{15}
		-
		q^{10}\frac{\jacprod{q^{-11}}{q^{15}}}
			{\jacprod{q^{17},q^{20},q^{23},q^{3},q^{-3}}{q^{15}}}
		\SSeries{10}{25}{15}
		\\&\quad
		+
		\frac{\jacprod{q^{-22},q^{-34}}{q^{15}}}
			{\jacprod{q^{-6},q^{-3},q^{-20},q^{-23},q^{-26}}{q^{15}}}
		\SSeries{13}{22}{15}
		-
		q^{13}\frac{\jacprod{q^{4},q^{-8}}{q^{15}}}
			{\jacprod{q^{20},q^{23},q^{26},q^{6},q^{3}}{q^{15}}}
		\SSeries{13}{37}{15}
.
\end{align*}
Simplifying the products yields
\begin{align}
	\label{5DissectionEq1a}
	&q^{-6}\frac{\jacprod{q^{6},q^{6}}{q^{15}}\aqprod{q^{15}}{q^{15}}{\infty}^2}
		{\jacprod{q^{2},q^{2},q^{5},q^{5},q^{7},q^{7}}{q^{15}}}
	\nonumber\\
	&=
		-\frac{1}{\jacprod{q^{3},q^{5},q^{6}}{q^{15}}}\SSeries{7}{-2}{15}
		-
		q^7\frac{1}{\jacprod{q^{3},q^{5},q^{6}}{q^{15}}}\SSeries{7}{13}{15}
		\nonumber\\&\quad
		+
		q^{7}\frac{\jacprod{q,q^{4}}{q^{15}}}
			{\jacprod{q^{2},q^{3},q^{3},q^{5},q^{7}}{q^{15}}}
		\SSeries{10}{10}{15}
		+
		q^{17}\frac{\jacprod{q,q^{4}}{q^{15}}}
			{\jacprod{q^{2},q^{3},q^{3},q^{7}}{q^{15}}}
		\SSeries{10}{25}{15}
		\nonumber\\&\quad
		-q^{16}\frac{1}{\jacprod{q^{3},q^{5},q^{6}}{q^{15}}}\SSeries{13}{22}{15}
		-
		q^{29}\frac{1}{\jacprod{q^{3},q^{5},q^{6}}{q^{15}}}\SSeries{13}{37}{15}
	.
\end{align}
We see that multiplying both sides of (\ref{5DissectionEq1a})
by $\jacprod{q^3,q^5,q^6}{q^{15}}$
implies (\ref{5DissectionEq1}) upon noting that
$\frac{\jacprod{q^3,q^6,q^6,q^6}{q^{15}}\aqprod{q^{15}}{q^{15}}{\infty}^2}
	{\jacprod{q^2,q^2,q^5,q^7,q^7}{q^{15}}}
= \frac{\aqprod{q^3}{q^3}{\infty}^3}{\aqprod{q^5}{q^5}{\infty}\jacprod{q^2}{q^5}^2}$
.

In Lemma \ref{ChanLemma3} we use $q\mapsto q^{15}$, $a_1=q^{-12}$, $a_2=q^{-18}$,
$b_1=q$, $b_2=q^4$, $b_3=q^{10}$ to get
\begin{align*}
	&\frac{\jacprod{q^{-12},q^{-18}}{q^{15}}\aqprod{q^{15}}{q^{15}}{\infty}^2}
	{\jacprod{q,q^{4},q^{10},q^{-1},q^{-4},q^{-10}}{q^{15}}}
	\\
	&=
		\frac{\jacprod{q^{-13},q^{-19}}{q^{15}}}
			{\jacprod{q^{3},q^{9},q^{-2},q^{-5},q^{-11}}{q^{15}}}
		\SSeries{1}{-26}{15}
		-
		q\frac{\jacprod{q^{-11},q^{-17}}{q^{15}}}
			{\jacprod{q^{2},q^{5},q^{11},q^{-3},q^{-9}}{q^{15}}}
		\SSeries{1}{-11}{15}
		\\&\quad
		+\frac{\jacprod{q^{-16},q^{-22}}{q^{15}}}
			{\jacprod{q^{-3},q^{6},q^{-5},q^{-8},q^{-14}}{q^{15}}}
		\SSeries{4}{-14}{15}
		-
		q^{4}\frac{\jacprod{q^{-8},q^{-14}}{q^{15}}}
			{\jacprod{q^{5},q^{8},q^{14},q^{3},q^{-6}}{q^{15}}}
		\SSeries{4}{1}{15}
		\\&\quad
		+\frac{\jacprod{q^{-22},q^{-28}}{q^{15}}}
			{\jacprod{q^{-9},q^{-6},q^{-11},q^{-14},q^{-20}}{q^{15}}}
		\SSeries{10}{10}{15}
		-
		q^{10}\frac{\jacprod{q^{-2},q^{-8}}{q^{15}}}
			{\jacprod{q^{11},q^{14},q^{20},q^{9},q^{6}}{q^{15}}}
		\SSeries{10}{25}{15}
.
\end{align*}
Simplifying the products yields
\begin{align}
	\label{5DissectionEq2a}
	&q^{-18}\frac{\jacprod{q^{3},q^{3}}{q^{15}}\aqprod{q^{15}}{q^{15}}{\infty}^2}
	{\jacprod{q,q,q^{4},q^{4},q^{5},q^{5}}{q^{15}}}
	\nonumber\\
	&=
		q^{-18}\frac{1}{\jacprod{q^{3},q^{5},q^{6}}{q^{15}}}\SSeries{1}{-26}{15}
		+
		q^{-17}\frac{1}{\jacprod{q^{3},q^{5},q^{6}}{q^{15}}}\SSeries{1}{-11}{15}
		\nonumber\\&\quad
		+
		q^{-16}\frac{1}{\jacprod{q^{3},q^{5},q^{6}}{q^{15}}}\SSeries{4}{-14}{15}
		+
		q^{-12}\frac{1}{\jacprod{q^{3},q^{5},q^{6}}{q^{15}}}\SSeries{4}{1}{15}
		\nonumber\\&\quad
		+
		q^{-5}\frac{\jacprod{q^{2},q^{7}}{q^{15}}}
			{\jacprod{q,q^{4},q^{5},q^{6},q^{6}}{q^{15}}}
		\SSeries{10}{10}{15}
		+
		q^5\frac{\jacprod{q^{2},q^{7}}{q^{15}}}
			{\jacprod{q,q^{4},q^{5},q^{6},q^{6}}{q^{15}}}
		\SSeries{10}{25}{15}
	.
\end{align}
We see that multiplying both sides of (\ref{5DissectionEq2a})
by $q^{18}\jacprod{q^3,q^5,q^6}{q^{15}}$
implies (\ref{5DissectionEq1}) upon noting that
$\frac{\jacprod{q^3,q^3,q^3,q^6}{q^{15}}\aqprod{q^{15}}{q^{15}}{\infty}^2}
	{\jacprod{q,q,q^4,q^4,q^5}{q^{15}}}
= \frac{\aqprod{q^3}{q^3}{\infty}^3}{\aqprod{q^5}{q^5}{\infty}\jacprod{q}{q^5}^2}$
.
\end{proof}

\begin{proposition}\label{Prop2Zeta5}
\begin{align}
	&\SSeries{1}{-17}{15}
	+q^3\SSeries{4}{-8}{15}
	+q^{10}\SSeries{7}{7}{15}
	+q^{27}\SSeries{13}{28}{15}
	\nonumber\\	\label{5DissectionEq3}
	&=
	\frac{\jacprod{q}{q^{5}}}{\jacprod{q^{2}}{q^{5}}}
	\Parans{\SSeries{1}{-20}{15} +q^{4}\SSeries{4}{-5}{15}}
	,\\
	&\SSeries{7}{4}{15}+q^{8}\SSeries{10}{16}{15}+q^{10}\SSeries{10}{19}{15}
		+q^{21}\SSeries{13}{31}{15}
	\nonumber\\\label{5DissectionEq4}
	&=
		-\frac{\jacprod{q^{2}}{q^{5}}}{\jacprod{q}{q^{5}}}
		\Parans{q^{-9}\SSeries{1}{-20}{15}+q^{-5}\SSeries{4}{-5}{15}}		
		+
		q^{-9}\frac{\aqprod{q^{3}}{q^{3}}{\infty}^3}
		{\aqprod{q}{q}{\infty}}
.
\end{align}
\end{proposition}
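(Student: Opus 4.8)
The plan is to prove Proposition~\ref{Prop2Zeta5} by the same mechanism used for Propositions~\ref{Prop1Zeta3}, \ref{Prop2Zeta3}, and \ref{Prop1Zeta5}: reduce each identity to a specialization of Lemma~\ref{ChanLemma3} or Lemma~\ref{ChanLemma4}. For (\ref{5DissectionEq3}), I would first move all $\SSeries{z}{w}{q}$ terms to one side and multiply by $\jacprod{q^2}{q^5}$ (equivalently $\jacprod{q^2,q^3,q^7}{q^{15}}$), so that the claimed identity becomes an assertion that a certain combination of six $\Sigma$-terms equals the product $\frac{\aqprod{q^{15}}{q^{15}}{\infty}^2\jacprod{q,q^4,q^6}{q^{15}}}{(\text{denominator products})}$. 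One then applies Lemma~\ref{ChanLemma3} with $q\mapsto q^{15}$ and a suitable choice of $b_1,b_2,b_3$ among $\{q,q^4,q^7,q^{10},q^{13}\}$ together with $a_1,a_2$ negative powers of $q$ chosen so that the exponents $a_1a_2b_i^4$ and $b_i^4/(a_1a_2q^3)$ match the arguments $-17,-8,7$ and the $b_i$-values $1,4,7$ appearing on the left — reading the exponents backwards from the target, the natural guess is $b_1=q$, $b_2=q^4$, $b_3=q^7$ with $a_1,a_2$ forced by $a_1a_2=q^{-18}$ (so $a_1a_2b_1^4 = q^{-14}$, etc.); one also needs the two remaining $\Sigma$-terms on the right of Lemma~\ref{ChanLemma3} to collapse onto $\SSeries{1}{-20}{15}$ and $\SSeries{4}{-5}{15}$, possibly after applying the symmetries (\ref{SigmaProperty1}) and (\ref{SigmaProperty3}) to rewrite $\SSeries{13}{28}{15}$ and the like as $\Sigma$'s with $b$-argument $1$ or $4$.

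For (\ref{5DissectionEq4}) I would proceed analogously, but here the right-hand side contains the ``extra'' eta-quotient $q^{-9}\frac{\aqprod{q^3}{q^3}{\infty}^3}{\aqprod{q}{q}{\infty}}$ in addition to the $\jacprod{q^2}{q^5}/\jacprod{q}{q^5}$-scaled $\Sigma$-pair, so after clearing denominators the target product will be a genuine theta-quotient identity rather than just a reshuffling of $\Sigma$'s. The route is: move the $\Sigma$-terms to one side, multiply by $\jacprod{q}{q^5}$, recognize the resulting product via the standard reductions $\jacprod{q,q^4,q^6}{q^{15}}=\jacprod{q}{q^5}$ and $\jacprod{q^2,q^3,q^7}{q^{15}}=\jacprod{q^2}{q^5}$, and then match with a specialization of Lemma~\ref{ChanLemma3} (most likely $q\mapsto q^{15}$, $b_1=q^7$, $b_2=q^{10}$, $b_3=q^{13}$, with $a_1a_2$ chosen so the arguments $4,16,19,31$ line up). The eta-quotient term should emerge exactly as one of the six summands of Lemma~\ref{ChanLemma3} whose $\Sigma$-part telescopes to $1$ or is absorbed — or, more likely, from combining two of the Lemma's product prefactors using a Jacobi theta identity such as the one from \cite{GarvanYesilyurt} already invoked in Proposition~\ref{Prop3Zeta3}. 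In each case, as with the earlier propositions, one may have to apply (\ref{SigmaProperty3}) once to convert a $\SSeries{z}{w}{q}$ with the ``wrong'' sign of $w$-exponent into the canonical form before the coefficients match.

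The main obstacle I expect is purely bookkeeping: in Lemma~\ref{ChanLemma3} each of the six terms carries a product of five (or six counting the numerator $\jacprod{a_1 b_i^{\pm},a_2 b_i^{\pm}}{q}$) theta functions with exponents that are linear in the chosen parameters mod $15$, and one must reduce every negative or large exponent to the range $\{1,\dots,14\}$ using (\ref{JacProdProperty1})--(\ref{JacProdProperty3}), keeping careful track of the resulting powers of $q$ and signs. Getting the overall $q$-power and sign on the product side to agree with the stated right-hand sides of (\ref{5DissectionEq3}) and (\ref{5DissectionEq4}) — and confirming that the two ``leftover'' $\Sigma$-terms from Lemma~\ref{ChanLemma3} really do coincide with the $\SSeries{1}{-20}{15}$, $\SSeries{4}{-5}{15}$ pair (up to the symmetries in Proposition~\ref{PropositionMiscIdents}) — is the delicate point. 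Once the parameters are pinned down correctly, the verification is mechanical and I would simply state ``Simplifying the products yields'' the claimed equivalent form, exactly as in the proofs of Propositions~\ref{Prop1Zeta3}--\ref{Prop1Zeta5}.
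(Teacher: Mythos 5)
Your overall strategy---reduce each identity to a specialization of one of Chan's Lambert-series identities and then grind through the product simplifications---is the right one, but the specific lemma you reach for cannot work, and this is a genuine gap rather than bookkeeping. Equation (\ref{5DissectionEq3}) involves $\Sigma$-terms whose first arguments run over $q^{1},q^{4},q^{7},q^{13}$ (four distinct residues mod $15$), and (\ref{5DissectionEq4}) involves $q^{1},q^{4},q^{7},q^{10},q^{13}$ (five). Lemma \ref{ChanLemma3} produces exactly two $\Sigma$-terms at each of only \emph{three} pole locations $b_1,b_2,b_3$, and the symmetries (\ref{SigmaProperty1}) and (\ref{SigmaProperty3}) can only trade a first argument $z$ for $z^{-1}$ or $qz^{-1}$; with $q\mapsto q^{15}$ this converts $q^{13}$ into $q^{-13}$ or $q^{2}$, never into $q^{1}$, $q^{4}$, or $q^{7}$. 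So your hope of rewriting $\SSeries{13}{28}{15}$ as a $\Sigma$ with $b$-argument $1$ or $4$ fails, and no choice of three $b_i$'s from $\{q,q^4,q^7,q^{10},q^{13}\}$ can cover the residues that actually occur.

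The paper instead uses Lemma \ref{ChanLemma4} (the $s=10$, $r=6$ case) with all five $b_i=q,q^4,q^7,q^{10},q^{13}$ and six free parameters $a_1,\dots,a_6$. The extra freedom in the $a_i$ is spent making four of the ten $\Sigma$-coefficients vanish identically (each such numerator acquires a factor $\jacprod{q^{15k}}{q^{15}}=0$), leaving precisely the six $\Sigma$-terms appearing in each identity. For (\ref{5DissectionEq3}) the choice $a_1=q^{-15},a_2=q^{-13},a_3=q^{-10},a_4=q^{-8},a_5=q^{10},a_6=q^{12}$ also annihilates the product on the left of Lemma \ref{ChanLemma4} (since $\jacprod{q^{-15}}{q^{15}}=0$), so the surviving six terms sum to zero; for (\ref{5DissectionEq4}) the left-hand product survives and is exactly the source of $q^{-9}\aqprod{q^3}{q^3}{\infty}^3/\aqprod{q}{q}{\infty}$---it does not arise from combining prefactors via a Jacobi theta identity, as you conjectured. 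Without switching to Lemma \ref{ChanLemma4} and without the idea of choosing the $a_i$ to kill both the unwanted terms and (in the first case) the left-hand side, your plan cannot be completed.
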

\begin{proof}
In Lemma \ref{ChanLemma4} we use $q\mapsto q^{15}$, $a_1=q^{-15}$, 
$a_2 = q^{-13}$, $a_3 = q^{-10}$, $a_4 = q^{-8}$, $a_5 = q^{10}$, $a_6 = q^{12}$,
$b_1 = q$, $b_2 = q^{4}$, $b_3 = q^{7}$, $b_4 = q^{10}$, $b_5 = q^{13}$
and note both the product on the left and four terms on the right in 
Lemma \ref{ChanLemma4} are immediately zero, yielding
\begin{align*}
	0
	=&
		\frac{\jacprod{q^{-16},q^{-14},q^{-11},q^{-9},q^{9},q^{11}}{q^{15}}}
			{\jacprod{q^{3},q^{6},q^{9},q^{12},q^{-2},q^{-5},q^{-8},q^{-11},q^{-14}}{q^{15}}}
		\SSeries{1}{-20}{15}
		\\&
		-q\frac{\jacprod{q^{-14},q^{-12},q^{-9},q^{-7},q^{11},q^{13}}{q^{15}}}
			{\jacprod{q^{2},q^{5},q^{8},q^{11},q^{14},q^{-3},q^{-6},q^{-9},q^{-12}}{q^{15}}}
		\SSeries{1}{-17}{15}
		\\&
		+\frac{\jacprod{q^{-19},q^{-17},q^{-14},q^{-12},q^{6},q^{8}}{q^{15}}}
			{\jacprod{q^{-3},q^{3},q^{6},q^{9},q^{-5},q^{-8},q^{-11},q^{-14},q^{-17}}{q^{15}}}
		\SSeries{4}{-8}{15}
		\\&
		-q^{4}\frac{\jacprod{q^{-11},q^{-9},q^{-6},q^{-4},q^{14},q^{16}}{q^{15}}}
			{\jacprod{q^{5},q^{8},q^{11},q^{14},q^{17},q^{3},q^{-3},q^{-6},q^{-9}}{q^{15}}}
		\SSeries{4}{-5}{15}
		\\&
		-q^{7}\frac{\jacprod{q^{-8},q^{-6},q^{-3},q^{-1},q^{17},q^{19}}{q^{15}}}
			{\jacprod{q^{8},q^{11},q^{14},q^{17},q^{20},q^{6},q^{3},q^{-3},q^{-6}}{q^{15}}}
		\SSeries{7}{7}{15}
		\\&
		+\frac{\jacprod{q^{-28},q^{-26},q^{-23},q^{-21},q^{-3},q^{-1}}{q^{15}}}
			{\jacprod{q^{-12},q^{-9},q^{-6},q^{-3},q^{-14},q^{-17},q^{-20},q^{-23},q^{-26}}{q^{15}}}
		\SSeries{13}{28}{15}
.
\end{align*}
Simplifying the products yields
\begin{align}
	\label{5DissectionEq3a}
	0
	=&
		q^{-11}\frac{\jacprod{q,q^{4}}{q^{15}}}
			{\jacprod{q^{2},q^{3},q^{3},q^{5},q^{7}}{q^{15}}}\SSeries{1}{-20}{15}
		-
		q^{-11}\frac{1}{\jacprod{q^{3},q^{5},q^{6}}{q^{15}}}\SSeries{1}{-17}{15}
		\nonumber\\&
		-
		q^{-8}\frac{1}{\jacprod{q^{3},q^{5},q^{6}}{q^{15}}}\SSeries{4}{-8}{15}
		+
		q^{-7}\frac{\jacprod{q,q^{4}}{q^{15}}}
			{\jacprod{q^{2},q^{3},q^{3},q^{5},q^{7}}{q^{15}}}\SSeries{4}{-5}{15}
		\nonumber\\&
		-
		q^{-1}\frac{1}{\jacprod{q^{3},q^{5},q^{6}}{q^{15}}}\SSeries{7}{7}{15}
		-
		q^{16}\frac{1}{\jacprod{q^{3},q^{5},q^{6}}{q^{15}}}\SSeries{13}{28}{15}
.
\end{align}
We see multiplying both sides of 
(\ref{5DissectionEq3a}) by
$q^{11}\jacprod{q^3,q^5,q^6}{q^{15}}$
implies (\ref{5DissectionEq3}).


In Lemma \ref{ChanLemma4} we use $q\mapsto q^{15}$, $a_1=q^{-12}$, 
$a_2 = q^{-11}$, $a_3 = q^{-7}$, $a_4 = q^{-6}$, $a_5 = q^{-2}$, $a_6 = q^{14}$,
$b_1 = q$, $b_2 = q^{4}$, $b_3 = q^{7}$, $b_4 = q^{10}$, $b_5 = q^{13}$
and note four terms on the right in 
Lemma \ref{ChanLemma4} are immediately zero, yielding
\begin{align*}
	&\frac{\jacprod{q^{-12},q^{-11},q^{-7},q^{-6},q^{-2},q^{14}}{q^{15}}\aqprod{q^{15}}{q^{15}}{\infty}}
	{\jacprod{q^{1},q^{4},q^{7},q^{10},q^{13},q^{-1},q^{-4},q^{-7},q^{-10},q^{-13}}{q^{15}}}
	\\
	&=
		\frac{\jacprod{q^{-13},q^{-12},q^{-8},q^{-7},q^{-3},q^{13}}{q^{15}}}
			{\jacprod{q^{3},q^{6},q^{9},q^{12},q^{-2},q^{-5},q^{-8},q^{-11},q^{-14}}{q^{15}}}
		\SSeries{1}{-20}{15}
		\\&\quad
		-q^{4}\frac{\jacprod{q^{-8},q^{-7},q^{-3},q^{-2},q^{2},q^{18}}{q^{15}}}
			{\jacprod{q^{5},q^{8},q^{11},q^{14},q^{17},q^{3},q^{-3},q^{-6},q^{-9}}{q^{15}}}
		\SSeries{4}{-5}{15}
		\\&\quad
		+\frac{\jacprod{q^{-19},q^{-18},q^{-14},q^{-13},q^{-9},q^{7}}{q^{15}}}
			{\jacprod{q^{-6},q^{-3},q^{3},q^{6},q^{-8},q^{-11},q^{-14},q^{-17},q^{-20}}{q^{15}}}
		\SSeries{7}{4}{15}
		\\&\quad
		+\frac{\jacprod{q^{-22},q^{-21},q^{-17},q^{-16},q^{-12},q^{4}}{q^{15}}}
			{\jacprod{q^{-9},q^{-6},q^{-3},q^{3},q^{-11},q^{-14},q^{-17},q^{-20},q^{-23}}{q^{15}}}
		\SSeries{10}{16}{15}
		\\&\quad
		-q^{10}\frac{\jacprod{q^{-2},q^{-1},q^{3},q^{4},q^{8},q^{24}}{q^{15}}}
			{\jacprod{q^{11},q^{14},q^{17},q^{20},q^{23},q^{9},q^{6},q^{3},q^{-3}}{q^{15}}}
		\SSeries{10}{19}{15}
		\\&\quad
		-q^{13}\frac{\jacprod{q,q^{2},q^{6},q^{7},q^{11},q^{27}}{q^{15}}}
			{\jacprod{q^{14},q^{17},q^{20},q^{23},q^{26},q^{12},q^{9},q^{6},q^{3}}{q^{15}}}
		\SSeries{13}{31}{15}
.
\end{align*}
Simplifying the products yields
\begin{align}
	\label{5DissectionEq4a}
	&q^{-3}\frac{\jacprod{q^{3},q^{6}}{q^{15}}\aqprod{q^{15}}{q^{15}}{\infty}^2}
		{\jacprod{q,q^{2},q^{4},q^{5},q^{5},q^{7}}{q^{15}}}
	\nonumber\\
	&=	
		q^{-3}\frac{\jacprod{q^{2},q^{7}}{q^{15}}}
			{\jacprod{q^{1},q^{4},q^{5},q^{6},q^{6}}{q^{15}}}\SSeries{1}{-20}{15}
		+
		q\frac{\jacprod{q^{2},q^{7}}{q^{15}}}
			{\jacprod{q^{1},q^{4},q^{5},q^{6},q^{6}}{q^{15}}}\SSeries{4}{-5}{15}
		\nonumber\\&\quad
		+
		q^6\frac{1}{\jacprod{q^{3},q^{5},q^{6}}{q^{15}}}\SSeries{7}{4}{15}
		+
		q^{14}\frac{1}{\jacprod{q^{3},q^{5},q^{6}}{q^{15}}}\SSeries{10}{16}{15}
		\nonumber\\&\quad
		+
		q^{16}\frac{1}{\jacprod{q^{3},q^{5},q^{6}}{q^{15}}}\SSeries{10}{19}{15}
		+
		q^{27}\frac{1}{\jacprod{q^{3},q^{5},q^{6}}{q^{15}}}\SSeries{13}{31}{15}
.
\end{align}
\sloppy
We see multiplying (\ref{5DissectionEq4a})
by $q^{-6}\jacprod{q^3,q^5,q^6}{q^{15}}$
implies (\ref{5DissectionEq4}), upon noting that
$\frac{\jacprod{q^{3},q^3,q^6,q^{6}}{q^{15}}\aqprod{q^{15}}{q^{15}}{\infty}^2}
		{\jacprod{q^{1},q^{2},q^{4},q^{5},q^{7}}{q^{15}}}
=\frac{\aqprod{q^3}{q^3}{\infty}^3}{\aqprod{q}{q}{\infty}}
$.

\fussy

\end{proof}

\begin{proposition}\label{Prop3Zeta5}
\begin{align}
	&
	\SSeries{1}{-14}{15} + q^2\SSeries{4}{-11}{15} +q^{7}\SSeries{7}{1}{15}
		+q^{32}\SSeries{13}{34}{15}
	\nonumber\\\label{5DissectionEq5}
	&=
	-\frac{\jacprod{q^{2}}{q^{5}}}{\jacprod{q}{q^{5}}}
	\Parans{q^{11}\SSeries{7}{10}{15}+q^{24}\SSeries{13}{25}{15}}
	,\\
	&\SSeries{1}{-23}{15}+q^{5}\SSeries{4}{-2}{15}+q^{15}\SSeries{10}{13}{15}
		+q^{21}\SSeries{10}{22}{15}
	\nonumber\\\label{5DissectionEq6}
	&=
		\frac{\jacprod{q}{q^{5}}}{\jacprod{q^{2}}{q^{5}}}
		\Parans{q^{12}\SSeries{7}{10}{15}+q^{25}\SSeries{13}{25}{15}}
		+
		\frac{\aqprod{q^3}{q^3}{\infty}^3}
		{\aqprod{q}{q}{\infty}}
.
\end{align}
\end{proposition}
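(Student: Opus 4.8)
The plan is to prove (\ref{5DissectionEq5}) and (\ref{5DissectionEq6}) by the same mechanism already used for Propositions \ref{Prop1Zeta5} and \ref{Prop2Zeta5}: move all $\SSeries{z}{w}{q}$ terms to one side, multiply through by the appropriate product of theta quotients (here $\jacprod{q}{q^5}$ or $\jacprod{q^2}{q^5}$ together with $\jacprod{q^3,q^5,q^6}{q^{15}}$), and recognize the resulting identity as a specialization of Lemma \ref{ChanLemma4}. So for (\ref{5DissectionEq5}) I would rewrite it as
\begin{align*}
	\jacprod{q}{q^5}\Parans{\SSeries{1}{-14}{15} + q^2\SSeries{4}{-11}{15} +q^{7}\SSeries{7}{1}{15} +q^{32}\SSeries{13}{34}{15}}
	&= -\jacprod{q^2}{q^5}\Parans{q^{11}\SSeries{7}{10}{15}+q^{24}\SSeries{13}{25}{15}},
\end{align*}
bring everything to one side, divide by $\jacprod{q^3,q^5,q^6}{q^{15}}$, and match against $q\mapsto q^{15}$ in Lemma \ref{ChanLemma4}. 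I expect the correct choice of parameters to have $b_i = q,q^4,q^7,q^{10},q^{13}$ as in the proof of (\ref{5DissectionEq4}), with the six $a_j$'s chosen so that the product on the left of Lemma \ref{ChanLemma4} vanishes (forcing one $\jacprod{\,}{}$ factor of the form $\jacprod{q^{15k}}{q^{15}}=0$), and so that exactly the unwanted four of the ten $\SSeries{}{}{}$ terms on the right have vanishing theta coefficients. Tracking the $\SSeries{z}{w}{q}$-indices, the left-side series have $z$-parameters $q,q^4,q^7$ and one more, and the right-side series $q^7,q^{13}$; this is consistent with $b_1=q$, $b_2=q^4$, $b_3=q^7$, $b_4=q^{10}$, $b_5=q^{13}$, and one reads off the $a_j$ from the $w$-exponents via the relations $w=a_1\cdots a_6 b_i^4$ and $w=b_i^4/(a_1\cdots a_6 q^3)$. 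For (\ref{5DissectionEq6}), the same template applies with the roles of $\jacprod{q}{q^5}$ and $\jacprod{q^2}{q^5}$ swapped and with the extra product term $\frac{\aqprod{q^3}{q^3}{\infty}^3}{\aqprod{q}{q}{\infty}}$ coming from the surviving left-hand product of Lemma \ref{ChanLemma4}, exactly as the analogous term appeared in (\ref{5DissectionEq4}); here I'd use the identity $\frac{\jacprod{q^3,q^3,q^6,q^6}{q^{15}}\aqprod{q^{15}}{q^{15}}{\infty}^2}{\jacprod{q,q^2,q^4,q^5,q^7}{q^{15}}}=\frac{\aqprod{q^3}{q^3}{\infty}^3}{\aqprod{q}{q}{\infty}}$ that was already invoked in the proof of Proposition \ref{Prop2Zeta5}.

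After matching, there are two routine-but-delicate steps. First, one must simplify the theta-quotient coefficients of each $\SSeries{}{}{}$ term: every $\jacprod{q^a}{q^{15}}$ appearing needs to be reduced mod $15$ and mod sign using (\ref{JacProdProperty1})--(\ref{JacProdProperty3}), collecting the stray powers of $q$, until the coefficients collapse to $\pm q^e/\jacprod{q^3,q^5,q^6}{q^{15}}$ (for the series destined to stay on the left) or to the $\jacprod{q^2,q^7}{q^{15}}/\jacprod{q,q^4,q^5,q^6,q^6}{q^{15}}$-type quotients that reduce via $\jacprod{q,q^4,q^6}{q^{15}}=\jacprod{q}{q^5}$ and $\jacprod{q^2,q^3,q^7}{q^{15}}=\jacprod{q^2}{q^5}$ to the clean $\jacprod{q}{q^5}/\jacprod{q^2}{q^5}$ ratios in the statement. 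Second, one must check that the $\SSeries{}{}{}$-indices produced by Lemma \ref{ChanLemma4} really are the ones in (\ref{5DissectionEq5})--(\ref{5DissectionEq6}) and not merely equal to them after applying (\ref{SigmaProperty1}) or (\ref{SigmaProperty3}); in the earlier propositions such an application was sometimes needed (e.g.\ $\SSeries{1}{-8}{9}=-q^{16}\SSeries{8}{17}{9}$), so I anticipate one or two invocations of (\ref{SigmaProperty3}) to reconcile, say, a $\SSeries{13}{34}{15}$ coming out of the lemma with a $\SSeries{13}{34}{15}$ or $\SSeries{2}{\cdot}{15}$ in the target.

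The main obstacle is purely bookkeeping: choosing the six parameters $a_1,\dots,a_6$ in Lemma \ref{ChanLemma4} so that simultaneously (i) the left product vanishes, (ii) precisely the four spurious $\SSeries{}{}{}$ terms on the right are killed by a zero theta factor, and (iii) the six surviving $w$-exponents match $-14,-11,1,10,25,34$ (resp.\ $-23,-2,13,22,10,25$) after the reductions. Since $a_1\cdots a_6$ is constrained by the $w$-relations and each $a_j b_i$ and $a_j b_i^{-1}$ must avoid being a power of $q^{15}$ except where a zero is wanted, this is a small finite search; the earlier proofs in this section show such a choice exists, and I would simply exhibit it (analogous to $a_1=q^{-12}, a_2=q^{-11}, a_3=q^{-7}, a_4=q^{-6}, a_5=q^{-2}, a_6=q^{14}$ for (\ref{5DissectionEq4})) and then carry out the product simplification and the one or two $\Sigma$-symmetry adjustments. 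No genuinely new idea is required beyond what Propositions \ref{Prop1Zeta5} and \ref{Prop2Zeta5} already demonstrate.
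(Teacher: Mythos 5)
Your proposal follows essentially the same route as the paper: both identities are obtained from Lemma \ref{ChanLemma4} with $q\mapsto q^{15}$ and $b_1,\dots,b_5 = q,q^4,q^7,q^{10},q^{13}$, choosing the $a_j$ so that the left-hand product and four right-hand terms vanish for (\ref{5DissectionEq5}), while for (\ref{5DissectionEq6}) four right-hand terms vanish and the surviving left-hand product supplies the $\aqprod{q^3}{q^3}{\infty}^3/\aqprod{q}{q}{\infty}$ term via the same product identity you cite. The paper's explicit choices are $(a_1,\dots,a_6)=(q^{-15},q^{-14},q^{-10},q^{-9},q^{10},q^{11})$ and $(q^{-13},q^{-11},q^{-8},q^{-6},q^{-3},q^{14})$ respectively, and no applications of (\ref{SigmaProperty3}) turn out to be needed; otherwise your outline matches the paper's proof.
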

\begin{proof}
In Lemma \ref{ChanLemma4} we use $q\mapsto q^{15}$, $a_1=q^{-15}$, 
$a_2 = q^{-14}$, $a_3 = q^{-10}$, $a_4 = q^{-9}$, $a_5 = q^{10}$, $a_6 = q^{11}$,
$b_1 = q$, $b_2 = q^{4}$, $b_3 = q^{7}$, $b_4 = q^{10}$, $b_5 = q^{13}$
and note both the product on the left and four terms on the right in 
Lemma \ref{ChanLemma4} are immediately zero, yielding
\begin{align*}
	0
	=&
		-q\frac{\jacprod{q^{-14},q^{-13},q^{-9},q^{-8},q^{11},q^{12}}{q^{15}}}
			{\jacprod{q^{2},q^{5},q^{8},q^{11},q^{14},q^{-3},q^{-6},q^{-9},q^{-12}}{q^{15}}}
		\SSeries{1}{-14}{15}
		\\&
		+\frac{\jacprod{q^{-19},q^{-18},q^{-14},q^{-13},q^{6},q^{7}}{q^{15}}}
			{\jacprod{q^{-3},q^{3},q^{6},q^{9},q^{-5},q^{-8},q^{-11},q^{-14},q^{-17}}{q^{15}}}
		\SSeries{4}{-11}{15}
		\\&
		+\frac{\jacprod{q^{-22},q^{-21},q^{-17},q^{-16},q^{3},q^{4}}{q^{15}}}
			{\jacprod{q^{-6},q^{-3},q^{3},q^{6},q^{-8},q^{-11},q^{-14},q^{-17},q^{-20}}{q^{15}}}
		\SSeries{7}{1}{15}
		\\&
		-q^{7}\frac{\jacprod{q^{-8},q^{-7},q^{-3},q^{-2},q^{17},q^{18}}{q^{15}}}
			{\jacprod{q^{8},q^{11},q^{14},q^{17},q^{20},q^{6},q^{3},q^{-3},q^{-6}}{q^{15}}}
		\SSeries{7}{10}{15}
		\\&
		+\frac{\jacprod{q^{-28},q^{-27},q^{-23},q^{-22},q^{-3},q^{-2}}{q^{15}}}
			{\jacprod{q^{-12},q^{-9},q^{-6},q^{-3},q^{-14},q^{-17},q^{-20},q^{-23},q^{-26}}{q^{15}}}
		\SSeries{13}{25}{15}
		\\&
		-q^{13}\frac{\jacprod{q^{-2},q^{-1},q^{3},q^{4},q^{23},q^{24}}{q^{15}}}
			{\jacprod{q^{14},q^{17},q^{20},q^{23},q^{26},q^{12},q^{9},q^{6},q^{3}}{q^{15}}}
		\SSeries{13}{34}{15}
.
\end{align*}
Simplifying the products yields
\begin{align}
	\label{5DissectionEq5a}
	0 
	=&
		-q^{-13}\frac{1}{\jacprod{q^{3},q^{5},q^{6}}{q^{15}}}\SSeries{1}{-14}{15}
		-q^{-11}\frac{1}{\jacprod{q^{3},q^{5},q^{6}}{q^{15}}}\SSeries{4}{-11}{15}
		\nonumber\\&
		-q^{-6}\frac{1}{\jacprod{q^{3},q^{5},q^{6}}{q^{15}}}\SSeries{7}{1}{15}
		-q^{-2}\frac{\jacprod{q^{2},q^{7}}{q^{15}}}
			{\jacprod{q,q^{4},q^{5},q^{6},q^{6}}{q^{15}}}\SSeries{7}{10}{15}
		\nonumber\\&
		-q^{11}\frac{\jacprod{q^{2},q^{7}}{q^{15}}}
		{\jacprod{q,q^{4},q^{5},q^{6},q^{6}}{q^{15}}}\SSeries{13}{25}{15}
		-q^{19}\frac{1}{\jacprod{q^{3},q^{5},q^{6}}{q^{15}}}\SSeries{13}{34}{15}
.
\end{align}
We see multiplying (\ref{5DissectionEq5a})
by $q^{13}\jacprod{q^3,q^5,q^6}{q^{15}}$
implies (\ref{5DissectionEq5}).

In Lemma \ref{ChanLemma4} we use $q\mapsto q^{15}$, $a_1=q^{-13}$, 
$a_2 = q^{-11}$, $a_3 = q^{-8}$, $a_4 = q^{-6}$, $a_5 = q^{-3}$, $a_6 = q^{14}$,
$b_1 = q$, $b_2 = q^{4}$, $b_3 = q^{7}$, $b_4 = q^{10}$, $b_5 = q^{13}$
and note four terms on the right in 
Lemma \ref{ChanLemma4} are immediately zero, yielding
\begin{align*}
	&\frac{\jacprod{q^{-13},q^{-11},q^{-8},q^{-6},q^{-3},q^{14}}{q^{15}}\aqprod{q^{15}}{q^{15}}{\infty}}
		{\jacprod{q,q^{4},q^{7},q^{10},q^{13},q^{-1},q^{-4},q^{-7},q^{-10},q^{-13}}{q^{15}}}
	\\
	&=
		\frac{\jacprod{q^{-14},q^{-12},q^{-9},q^{-7},q^{-4},q^{13}}{q^{15}}}
			{\jacprod{q^{3},q^{6},q^{9},q^{12},q^{-2},q^{-5},q^{-8},q^{-11},q^{-14}}{q^{15}}}
		\SSeries{1}{-23}{15}
		\\&\quad
		-q^{4}\frac{\jacprod{q^{-9},q^{-7},q^{-4},q^{-2},q,q^{18}}{q^{15}}}
			{\jacprod{q^{5},q^{8},q^{11},q^{14},q^{17},q^{3},q^{-3},q^{-6},q^{-9}}{q^{15}}}
		\SSeries{4}{-2}{15}
		\\&\quad
		-q^{7}\frac{\jacprod{q^{-6},q^{-4},q^{-1},q,q^{4},q^{21}}{q^{15}}}
			{\jacprod{q^{8},q^{11},q^{14},q^{17},q^{20},q^{6},q^{3},q^{-3},q^{-6}}{q^{15}}}
		\SSeries{7}{10}{15}
		\\&\quad
		+\frac{\jacprod{q^{-23},q^{-21},q^{-18},q^{-16},q^{-13},q^{4}}{q^{15}}}
			{\jacprod{q^{-9},q^{-6},q^{-3},q^{3},q^{-11},q^{-14},q^{-17},q^{-20},q^{-23}}{q^{15}}}
		\SSeries{10}{13}{15}
		\\&\quad
		-q^{10}\frac{\jacprod{q^{-3},q^{-1},q^{2},q^{4},q^{7},q^{24}}{q^{15}}}
			{\jacprod{q^{11},q^{14},q^{17},q^{20},q^{23},q^{9},q^{6},q^{3},q^{-3}}{q^{15}}}
		\SSeries{10}{22}{15}
		\\&\quad
		+\frac{\jacprod{q^{-26},q^{-24},q^{-21},q^{-19},q^{-16},q}{q^{15}}}
			{\jacprod{q^{-12},q^{-9},q^{-6},q^{-3},q^{-14},q^{-17},q^{-20},q^{-23},q^{-26}}{q^{15}}}
		\SSeries{13}{25}{15}
.
\end{align*}
Simplifying the products yields
\begin{align}
	\label{5DissectionEq6a}
	&q^{-6}\frac{\jacprod{q^{3},q^{6}}{q^{15}}\aqprod{q^{15}}{q^{15}}{\infty}^2}
		{\jacprod{q,q^{2},q^{4},q^{5},q^{5},q^{7}}{q^{15}}}
	\nonumber\\
	&=
		q^{-6}\frac{1}{\jacprod{q^{3},q^{5},q^{6}}{q^{15}}}\SSeries{1}{-23}{15}
		+
		q^{-1}\frac{1}{\jacprod{q^{3},q^{5},q^{6}}{q^{15}}}\SSeries{4}{-2}{15}
		\nonumber\\&\quad
		-q^6\frac{\jacprod{q,q^{4}}{q^{15}}}
			{\jacprod{q^{2},q^{3},q^{3},q^{5},q^{7}}{q^{15}}}\SSeries{7}{10}{15}
		+
		q^9\frac{1}{\jacprod{q^{3},q^{5},q^{6}}{q^{15}}}\SSeries{10}{13}{15}
		\nonumber\\&\quad
		+q^{15}\frac{1}{\jacprod{q^{3},q^{5},q^{6}}{q^{15}}}\SSeries{10}{22}{15}
		-
		q^{19}\frac{\jacprod{q,q^{4}}{q^{15}}}
			{\jacprod{q^{2},q^{3},q^{3},q^{5},q^{7}}{q^{15}}}\SSeries{13}{25}{15}
.
\end{align}
\sloppy
We see multiplying (\ref{5DissectionEq6a}) 
by $q^6\jacprod{q^3,q^5,q^6}{q^{15}}$
implies (\ref{5DissectionEq6}), again noting that
$\frac{\jacprod{q^{3},q^3,q^6,q^{6}}{q^{15}}\aqprod{q^{15}}{q^{15}}{\infty}^2}
		{\jacprod{q^{1},q^{2},q^{4},q^{5},q^{7}}{q^{15}}}
=\frac{\aqprod{q^3}{q^3}{\infty}^3}{\aqprod{q}{q}{\infty}}
$.

\fussy
\end{proof}

Similar to the proof of Theorem \ref{theorem2}, we begin with expressing
$\ST(\zeta_5,q)$ in terms of $U_5(b)$.
With $\zeta_5$ a primitive fifth root of unity, we have
\begin{align}
	\label{theorem3Eq1}
	\ST(\zeta_5,q)
	&=
	\frac{1}{\aqprod{q}{q}{\infty}}
	\sum_{n=-\infty}^\infty
		\frac{q^{6n^2+4n+1}(1-q^{6n+2})(1-q^{3n+1})(1-\zeta_5^2q^{3n+1})(1-\zeta_5^3q^{3n+1}) }
		{1-q^{15n+5}}
	\nonumber\\
	&=
	\frac{1}{\aqprod{q}{q}{\infty}}
	\left(
		qU_5(4)
		+(\zeta_5+\zeta_5^4)q^2U_5(7)
		-(1+\zeta_5+\zeta_5^4)q^3U_5(10)
		-(1+\zeta_5+\zeta_5^4)q^4U_5(13)
		\right.\nonumber\\&\qquad\qquad\qquad \left.
		+(\zeta_5+\zeta_5^4)q^5U_5(16)
		+q^6U_5(19)
		\right)
.
\end{align}

We find that
\begin{align*}
	U_5(b) 
	&= 
		\sum_{n=-\infty}^\infty\frac{q^{6n^2+bn}}{1-q^{15n+5}}
	\\
	&=
		\sum_{k=0}^4 q^{6k^2+bk} \sum_{n=-\infty}^\infty 
		\frac{q^{150n(n+1)}q^{60nk+5bn-150n}}{1-q^{15k+5}q^{75n}}
	\\
	&=
		\sum_{k=0}^4 q^{6k^2+bk} \sum_{n=-\infty}^\infty 
		\SSeries{15k+5}{60k+5b-150}{75}
.
\end{align*}
Thus
\begin{align*}
	&qU_5(4)
	-q^3U_5(10)
	-q^4U_5(13)
	+q^6U_5(19)
	\\
	&=
		q\Sigma(5,-130,75)
		+q^{11}\Sigma(20,-70,75)
		+q^{33}\Sigma(35,-10,75)
		+q^{67}\Sigma(50,50,75)
		+q^{113}\Sigma(65,110,75)
		\\&\quad
 	 	-q^3\Sigma(5,-100,75)
		-q^{19}\Sigma(20,-40,75)
		-q^{47}\Sigma(35,20,75)
		-q^{87}\Sigma(50,80,75)
		-q^{139}\Sigma(65,140,75)
		\\&\quad	
		-q^4\Sigma(5,-85,75)
		-q^{23}\Sigma(20,-25,75)
		-q^{54}\Sigma(35,35,75)
		-q^{97}\Sigma(50,95,75)
		-q^{152}\Sigma(65,155,75)
		\\&\quad
		+q^6\Sigma(5,-55,75)
		+q^{31}\Sigma(20,5,75)
		+q^{68}\Sigma(35,65,75)
		+q^{117}\Sigma(50,125,75)
		+q^{178}\Sigma(65,185,75)
,
\end{align*}
and
\begin{align*}
	&q^2U_5(7) - q^3U_5(10) - q^4U_5(13) + q^5U_5(16)
	\\
	&=
		q^2\Sigma(5,-115,75)
		+q^{15}\Sigma(20,-55,75)
		+q^{40}\Sigma(35,5,75)
		+q^{77}\Sigma(50,65,75)
		+q^{126}\Sigma(65,125,75)
		\\&\quad
		-q^3\Sigma(5,-100,75)
		-q^{19}\Sigma(20,-40,75)
		-q^{47}\Sigma(35,20,75)
		-q^{87}\Sigma(50,80,75)
		-q^{139}\Sigma(65,140,75)
		\\&\quad	
		-q^4\Sigma(5,-85,75)
		-q^{23}\Sigma(20,-25,75)
		-q^{54}\Sigma(35,35,75)
		-q^{97}\Sigma(50,95,75)
		-q^{152}\Sigma(65,155,75)
		\\&\quad
		+q^5\Sigma(5,-70,75)
		+q^{27}\Sigma(20,-10,75)
		+q^{61}\Sigma(35,50,75)
		+q^{107}\Sigma(50,110,75)
		+q^{165}\Sigma(65,170,75)
.
\end{align*}

Next we reorder the $\SSeries{z}{w}{q}$
terms and apply Propositions \ref{Prop1Zeta5} and \ref{Prop2Zeta5}
with $q\mapsto q^5$
to get that
\begin{align}
	&qU_5(4)
	-q^3U_5(10)
	-q^4U_5(13)
	+q^6U_5(19)
	\nonumber\\
	&=
		q^{67}\Sigma(50,50,75)
		+q^{117}\Sigma(50,125,75)
		 -q^3\Sigma(5,-100,75)-q^{23}\Sigma(20,-25,75)
		\nonumber\\&\quad
		+q^{33}\Parans{ \Sigma(35,-10,75) +q^{35}\Sigma(35,65,75)
			+q^{80}\Sigma(65,110,75)+q^{145}\Sigma(65,185,75)
		}
		\nonumber\\&\quad
		+q\Parans{\Sigma(5,-130,75)+q^5\Sigma(5,-55,75)
			+q^{10}\Sigma(20,-70,75)	+q^{30}\Sigma(20,5,75)
		}
		\nonumber\\&\quad
		-q^4\Parans{\Sigma(5,-85,75)+q^{15}\Sigma(20,-40,75)
			+q^{50}\Sigma(35,35,75)+q^{135}\Sigma(65,140,75)
		}
		\nonumber\\&\quad
		-q^{47}\Parans{\Sigma(35,20,75)+q^{40}\Sigma(50,80,75)
			+q^{50}\Sigma(50,95,75)+q^{105}\Sigma(65,155,75)
		}
	\nonumber\\
	&=
		\Parans{q^{66}\Sigma(50,50,75)+q^{116}\Sigma(50,125,75)}
		\Parans{-\frac{\jacprod{q^{10}}{q^{15}}}{\jacprod{q^{5}}{q^{15}}}   
			+q+q^2\frac{\jacprod{q^{5}}{q^{15}}}{\jacprod{q^{10}}{q^{15}}}}
		\nonumber\\&\quad
		 -\Parans{q^2\Sigma(5,-100,75)+q^{22}\Sigma(20,-25,75)}
		\Parans{-\frac{\jacprod{q^{10}}{q^{15}}}{\jacprod{q^{5}}{q^{15}}}   
			+q+q^2\frac{\jacprod{q^{5}}{q^{15}}}{\jacprod{q^{10}}{q^{15}}}}
		\nonumber\\&\quad
		-q^3\frac{\aqprod{q^{15}}{q^{15}}{\infty}^3}
			{\aqprod{q^{25}}{q^{25}}{\infty}\jacprod{q^{10}}{q^{25}}^2}
		+q\frac{\aqprod{q^{15}}{q^{15}}{\infty}^3}
			{\aqprod{q^{25}}{q^{25}}{\infty}\jacprod{q^{5}}{q^{25}}^2}
		-q^2\frac{\aqprod{q^{15}}{q^{15}}{\infty}^3}
			{\aqprod{q^{5}}{q^{5}}{\infty}}
	\nonumber\\
	&=
		\Parans{q^{66}\Sigma(50,50,75)+q^{116}\Sigma(50,125,75)}
		\Parans{-\frac{\jacprod{q^{10}}{q^{15}}}{\jacprod{q^{5}}{q^{15}}}   
			+q+q^2\frac{\jacprod{q^{5}}{q^{15}}}{\jacprod{q^{10}}{q^{15}}}}
		\nonumber\\&\quad
		 -\Parans{q^2\Sigma(5,-100,75)+q^{22}\Sigma(20,-25,75)}
		\Parans{-\frac{\jacprod{q^{10}}{q^{15}}}{\jacprod{q^{5}}{q^{15}}}   
			+q+q^2\frac{\jacprod{q^{5}}{q^{15}}}{\jacprod{q^{10}}{q^{15}}}}
		\nonumber\\&\quad
		+q\frac{\aqprod{q^{15}}{q^{15}}{\infty}^3}
			{\aqprod{q^{5}}{q^{5}}{\infty}}	
		\Parans{\frac{\jacprod{q^{10}}{q^{15}}}{\jacprod{q^{5}}{q^{15}}}   
			-q-q^2\frac{\jacprod{q^{5}}{q^{15}}}{\jacprod{q^{10}}{q^{15}}}}
.
\end{align}
However, by Euler's pentagonal number theorem and the Jacobi triple product 
identity, we have that
\begin{align*}
	\aqprod{q}{q}{\infty}
	&=
	\aqprod{q^{25}}{q^{25}}{\infty}
	\Parans{\frac{\jacprod{q^{10}}{q^{25}}}{\jacprod{q^{5}}{q^{25}}}   
			-q-q^2\frac{\jacprod{q^{5}}{q^{25}}}{\jacprod{q^{10}}{q^{25}}}}
.
\end{align*}
Thus,
\begin{align}
	\label{theorem3Eq2}
	&qU_5(4)
	-q^3U_5(10)
	-q^4U_5(13)
	+q^6U_5(19)
	\nonumber\\
	&=
		\frac{\aqprod{q}{q}{\infty}}{\aqprod{q^{25}}{q^{25}}{\infty}}
		\Parans{q^2\Sigma(5,-100,75)+q^{22}\Sigma(20,-25,75)
			-q^{66}\Sigma(50,50,75)-q^{116}\Sigma(50,125,75)}
		\nonumber\\&\quad			
		+q\frac{\aqprod{q}{q}{\infty}\aqprod{q^{15}}{q^{15}}{\infty}^3}
			{\aqprod{q^{5}}{q^{5}}{\infty}\aqprod{q^{25}}{q^{25}}{\infty}}	
.		
\end{align}

Similarly we reorder the $\SSeries{z}{w}{q}$
terms and apply Propositions \ref{Prop2Zeta5} and
\ref{Prop3Zeta5} with $q\mapsto q^5$
to get that
\begin{align}
	&q^2U_5(7) - q^3U_5(10) - q^4U_5(13) + q^5U_5(16)
	\nonumber\\
	&=
		q^{61}\Sigma(35,50,75)		+q^{126}\Sigma(65,125,75)
		-q^3\Sigma(5,-100,75)-q^{23}\Sigma(20,-25,75)
		\nonumber\\&\quad
		+q^2\Parans{\Sigma(5,-115,75) + q^{25}\Sigma(20,-10,75) 
			+ q^{75}\Sigma(50,65,75) + q^{105}\Sigma(50,110,75)
		}
		\nonumber\\&\quad
		+q^5\Parans{\Sigma(5,-70,75)+q^{10}\Sigma(20,-55,75)
			+q^{35}\Sigma(35,5,75)+q^{160}\Sigma(65,170,75)
		}
		\nonumber\\&\quad
		-q^4\Parans{\Sigma(5,-85,75)+q^{15}\Sigma(20,-40,75)
			+q^{50}\Sigma(35,35,75)+q^{135}\Sigma(65,140,75)
		}
		\nonumber\\&\quad
		-q^{47}\Parans{\Sigma(35,20,75)+q^{40}\Sigma(50,80,75)
			+q^{50}\Sigma(50,95,75)+q^{105}\Sigma(65,155,75)
		}
	\nonumber\\	
	&=
		\left( q^{60}\Sigma(35,50,75) + q^{125}\Sigma(65,125,75) \right)
		\Parans{-\frac{\jacprod{q^{10}}{q^{15}}}{\jacprod{q^{5}}{q^{15}}}   
			+q+q^2\frac{\jacprod{q^{5}}{q^{15}}}{\jacprod{q^{10}}{q^{15}}}}
		\nonumber\\&\quad
		 -\Parans{q^2\Sigma(5,-100,75)+q^{22}\Sigma(20,-25,75)}
		\Parans{-\frac{\jacprod{q^{10}}{q^{15}}}{\jacprod{q^{5}}{q^{15}}}   
			+q+q^2\frac{\jacprod{q^{5}}{q^{15}}}{\jacprod{q^{10}}{q^{15}}}}
		\nonumber\\&\quad
		+q^2\frac{\aqprod{q^{15}}{q^{15}}{\infty}^3}
			{\aqprod{q^{5}}{q^{5}}{\infty}}
		-q^2\frac{\aqprod{q^{15}}{q^{15}}{\infty}^3}
			{\aqprod{q^{5}}{q^{5}}{\infty}}
	\nonumber\\
	&=
		\frac{\aqprod{q}{q}{\infty}}{\aqprod{q^{25}}{q^{25}}{\infty}}
		\Parans{q^2\Sigma(5,-100,75)+q^{22}\Sigma(20,-25,75)
			-q^{60}\Sigma(35,50,75) - q^{125}\Sigma(65,125,75)
		}
\label{theorem3Eq3}
.		
\end{align}
Theorem \ref{theorem3} follows by equations (\ref{theorem3Eq1}), 
(\ref{theorem3Eq2}), and (\ref{theorem3Eq3}).

\section{Proof of Theorem \ref{crankTheorem}}
\begin{proof}
Using Theorem 2.1 of \cite{AndrewsBook} we find that
\begin{align}
	\ST(z,q)
	&=
	\sum_{n=1}^\infty \frac{q^n}{\aqprod{zq^n}{q}{\infty}}
		\sum_{k=0}^\infty \frac{\aqprod{zq}{q}{k}z^{-k}q^{nk}}{\aqprod{q}{q}{k}}
	\nonumber\\	
	&=
	\sum_{n=1}^\infty\sum_{k=0}^\infty \frac{z^{-k} q^{n+kn}}
		 {\aqprod{zq^{n+k}}{q}{\infty}\aqprod{q}{q}{k}}
	\nonumber\\
	&=
		\sum_{n=1}^\infty \frac{q^{n}}
		 {\aqprod{zq^{n}}{q}{\infty}}
		+	\sum_{n=1}^\infty\sum_{k=1}^\infty \frac{z^{-k} q^{n+kn}}
		 {\aqprod{zq^{n+k}}{q}{\infty}\aqprod{q}{q}{k}}
	\nonumber\\
	&=
		\sum_{n=1}^\infty \frac{q^{n}}
		 {\aqprod{zq^{n}}{q}{\infty}}
		+
		\sum_{n=1}^\infty\sum_{k=1}^\infty 
		\frac{q^{n}}{\aqprod{q^{n}}{q}{k}\aqprod{zq^{n+k}}{q}{\infty}}
		\cdot
		\frac{z^{-k} q^{kn}\aqprod{q}{q}{n+k-1}}
		 {\aqprod{q}{q}{n-1}\aqprod{q}{q}{k}}
\label{CrankIdentity}
.
\end{align}
The first term in (\ref{CrankIdentity}) is the generating function for 
non-empty partitions with the 
power of $q$ giving the number being partitioned and the power of $z$ giving one
less than the number of parts of the partition. This is the generating function
for partition pairs from $\ST$ where $\pi_2$ is empty, the power of $q$ gives
the number being partitioned and the power of $z$ gives the paircrank. 
	
In the second term in (\ref{CrankIdentity}), we interpret the summands 
as follows.
We have that $\frac{q^{n}}{\aqprod{q^{n}}{q}{k}\aqprod{zq^{n+k}}{q}{\infty}}$ 
is the generating function for
non-empty partitions
$\pi_1$
where $n$ is the smallest part,
the power of $q$ counts the number being partitioned, and
the power of $z$ counts the number 
of parts that are at least $n+k$.
We have that
$\frac{z^{-k} q^{kn}\aqprod{q}{q}{n+k-1}}{\aqprod{q}{q}{n-1}\aqprod{q}{q}{k}}$ 
is the generating function for partitions $\pi_2$
with exactly $k$ parts with each part
between $n$ and $2n-1$,
where the power of $q$ counts the number being partitioned and the power of $z$
counts the negative of the number of parts.
Thus the second term in (\ref{CrankIdentity}) is the
generating function for partition pairs from $\ST$, with both $\pi_1$ and 
$\pi_2$ non-empty, with the power of $q$ giving
the number being partitioned and the power of $z$ giving the paircrank.

This proves that $C(m,n)$ is the number of partition pairs from $\ST$
of $n$ with paircrank $m$. These rearrangements and interpretations
are essentially what was done in \cite{GarvanJennings} as an intermediate
step in getting an spt-crank defined on marked overpartitions. Also this is 
quite similar to the steps in \cite{AndrewsGarvan} where Andrews and Garvan gave the
ordinary partition crank after the vector partition crank.
\end{proof}

\section*{Acknowledgments}
The author would like to thank Frank Garvan for suggesting this problem
and for his help and encouragement.

\bibliographystyle{abbrv}
\bibliography{pairsWithConditionsRef}

\end{document}